\numberwithin{equation}{section}
\theoremstyle{plain}
\renewcommand{\L}{\mathbb{L}}
\newcommand{\E}{\ensuremath{\mathbb{E}}}
\renewcommand{\P}{\ensuremath{\mathbb{P}}}
\newcommand{\R}{\ensuremath{\mathbb{R}}}
\newcommand{\e}{\ensuremath{\epsilon}}
\newcommand{\var}{\mbox{Var}}
\def\1{\mathds{1}}
\def\Num{\bar N}
\numberwithin{equation}{section}
\newtheorem{remark}{Remark}
 \newtheorem{lemma}{Lemma}
\newtheorem{theorem}{Theorem}
\newtheorem{cor}{Corollary}
\date{\today}
\begin{document}

\title{Nonparametric Bayesian estimation of multivariate Hawkes processes}

\author{{
\sc Sophie Donnet},\\[2pt]
 AgroParisTech, INRA, France\\[6pt]
{\sc Vincent Rivoirard} \\[2pt]
Universit\'e Paris-Dauphine, Paris, France\\[6pt]
{\sc Judith Rousseau} \\[2pt]
Department of Statistics, University of Oxford and CEREMADE, Universit\'e Paris-Dauphine\\[6pt]
}

\maketitle

\begin{abstract}
This paper studies nonparametric estimation of parameters of multivariate Hawkes processes. We consider the Bayesian setting and derive posterior concentration rates. First rates are derived for $\L_1$-metrics for stochastic intensities of the Hawkes process. We then deduce rates for the $\L_1$-norm of interactions functions of the process. Our results are exemplified by using priors based on piecewise constant functions, with regular or random partitions  and  priors based on mixtures of Betas distributions. Numerical illustrations are then proposed with in mind applications for inferring functional connectivity graphs of neurons.
\end{abstract}

%\keywords{Multivariate counting process, Hawkes processes, nonparametric Bayesian estimation, posterior concentration rates}

%%%%%%%%%%%%%%%%%%%%%%%%%%
%%%%%%%%%%%%%%%%%%%%%%%%%%
\section{Introduction}

In this paper we study the properties of Bayesian nonparametric procedures in the context of multivariate Hawkes processes. The aim of this paper is to give some general results on posterior concentration rates for such models and to study some  families of nonparametric priors. 
%%%%%%%%%%%%%%%%%%%%%%%%%%
\subsection{Hawkes processes}\label{hawkes}
Hawkes processes, introduced by Hawkes (1971), are specific point processes which are extensively used to model data whose occurrences depend on previous occurrences of the same process. To describe them, we first consider $N$ a point process on $\R$. We denote by ${\mathcal B}(\R)$ the Borel $\sigma$-algebra on $\R$ and for any Borel set $A\in {\mathcal B}(\R)$, we denote by $N(A)$ the number of occurrences of $N$ in $A$.  For short, for any $t\geq 0$, $N_t$ denotes the number of occurrences in $[0,t]$. We assume that for any $t\geq 0$, $N_t<\infty$ almost surely. If $\mathcal G_t$ is the history of $N$ until $t$,
%$$\mathcal G_t=\sigma\left\{N(C): \ C\in {\mathcal B}(\R), \ C\subset (-\infty,t]\right\},$$
then, $\lambda_t$, the predictable intensity of $N$ at time $t$, which represents the probability to observe a new occurrence at time $t$ given previous occurrences, is defined by
$$\lambda_tdt=\P(dN_t=1 \, |\, \mathcal G_{t^-}),$$
where $dt$ denotes an arbitrary small increment of $t$ and $dN_t=N([t,t+dt]).$
For the case of {\it univariate Hawkes processes},  we have
$$\lambda_t=\phi\left(\int_{-\infty}^{t^-}h(t-s)dN_s\right),$$
for $\phi:\R\mapsto\R_+$  and $h:\R\mapsto\R$. We recall that the last integral means
$$\int_{-\infty}^{t^-}h(t-s)dN_s=\sum_{T_i\in N : \,T_i<t}h(t-T_i).$$
The case of {\it linear Hawkes processes} corresponds to $\phi(x)=\nu+x$ and $h(t)\geq 0$ for any $t$. The parameter $\nu\in\R_+^*$ is  the {\it spontaneous rate} and $h$ is the {\it self-exciting function}. We now assume that $N$ is a {\it marked} point process, meaning that each occurrence $T_i$ of $N$ is associated to a mark $m_i\in\{1,\ldots,K\}$, see \cite{MR1950431}. In this case, we can identify $N$ with a {\it multivariate point process} and for any $k\in\{1,\ldots,K\},$ $N^k(A)$ denotes the number of occurrences of $N$ in $A$ with mark $k$. In the sequel, we only consider linear multivariate Hawkes processes, so we assume that $\lambda^k_t$, the intensity of $N^k$, is 
\begin{equation}\label{intensi}
\lambda^k_t=\nu_k+\sum_{\ell=1}^K\int_{-\infty}^{t^-}h_{\ell, k}(t-u)dN^{\ell}_u,
\end{equation}
where $\nu_k>0$ and $h_{\ell, k}$, which is assumed to be non-negative and supported by $\R_+$, is the {\it interaction function} of $N^{\ell}$ on $N^k$. Theorem 7 of \cite{BM1} shows that if the $K\times K$ matrix  $\rho$, with
\begin{equation}\label{rho-matrix}
\rho_{\ell,k}=\int_0^{+\infty} h_{\ell, k}(t)dt,\quad \ell,k=1,\ldots,K,
\end{equation} 
has a spectral radius strictly smaller than 1, then there exists a unique stationary distribution for the multivariate process $N=(N^k)_{k=1,\ldots,K}$ with the previous dynamics and finite average intensity. 

Hawkes processes have been extensively  used in a wide range of applications. They are used to model earthquakes \cite{VJ-O, ogata88, MR1941459}, interactions in social
networks \cite{arXiv:1203.3516, icml2013_zhou13, Li:2014:LPM:2893873.2893891, arXiv:1501.00725, crane2008robust, mitchell2009hawkes, yang2013mixture}, financial data \cite{embrechts2011multivariate, bacry2015hawkes, bacry2016estimation, bacry2013modelling, ait2015modeling}, violence rates \cite{mohler2011self, porter2012self}, genomes \cite{gusto2005fado, CSWH, MR2722456} or neuronal activities \cite{Brillinger1988, chornoboy1988maximum, okatan2005analyzing, paninski2007statistical, pillow2008spatio, HRR, reynaud2014goodness, reynaud2013inference}, to name but a few.

Parametric inference for Hawkes models based on the likelihood is the most common in the literature and we refer the reader to \cite{ogata88, CSWH} for instance. Non-parametric estimation has first been considered by Reynaud-Bouret and Schbath \cite{MR2722456} who proposed a procedure based on minimization of an $\ell_2$-criterion penalized by an $\ell_0$-penalty for univariate Hawkes processes. Their results have been extended to the multivariate setting by Hansen, Reynaud-Bouret and Rivoirard \cite{HRR} where the  $\ell_0$-penalty is replaced with an $\ell_1$-penalty. The resulting Lasso-type estimate leads to an easily implementable procedure providing sparse estimation of the structure of the underlying connectivity graph. To generalize this procedure  to the high-dimensional setting, Chen, Witten and Shojaie \cite{MR3634334} proposed a simple and computationally inexpensive edge screening approach, whereas  Bacry, Ga\"{i}ffas and Muzy \cite{arXiv:1501.00725} combine $\ell_1$ and trace norm penalizations to take into account the low rank property of their self-excitement matrix. Very recently, to deal with non-positive interaction functions,  Chen, Shojaie, Shea-Brown and Witten \cite{chen:Shoj:shea:witten} combine the thinning process representation and a coupling construction to bound the dependence coefficient of the Hawkes process. Other alternatives based on spectral methods \cite{Bacry2012} or estimation through the resolution of a Wiener-Hopf system \cite{MR3480107} %and moment matching methods (M. Achab, E. Bacry, S. Gaiffas, I. Mastromatteo, and J.-F. Muzy, 2017) 
can also been found in the literature. These are all frequentist methods; Bayesian approaches for Hawkes models have received much less attention. To the best of our knowledge, the only contributions for the Bayesian inference are due to Rasmussen \cite{MR3085883} and Blundell, Beck and Heller \cite{NIPS2012_4834} who explored parametric approaches and used MCMC to approximate the posterior distribution of the parameters. 
%
%
%
%Their statistical inference is relatively well understood, too, from
%a classical parametric angle (Ogata [
%34]) 
%
%Inference for Hawkes models based on the likelihood can be found in the literature,
%in particular, for parametric models [
%Ogata (1988),
%
%together with recent significant advances
%in nonparametrics (Reynaud-Bouret and Schbath [
%41], Hansen et al. [
%18]). E. Bacry, S. Ga�ffas, J.-F. Muzy (2016) (sparse mathods). Hansen et al combined with screening by Chen, Witten and Shojaie (2017).
%
%
%spectral methods Emmanuel Bacry, Khalil Dayri, and Jean-Francois Muzy (2012) or esti-
%mation through the resolution of a Wiener-Hopf system (Emmanuel Bacry and Jean-Fran?cois Muzy (2014ab) moment matching methods (M. Achab, E. Bacry, S. Gaiffas, I. Mastromatteo, and J.-F. Muzy, 2017),  have been proposed.
%
%%%%%%%%%%%%%%%%%%%%%%%%%%
\subsection{Our contribution}\label{contribution}
In this paper, we study nonparametric posterior concentration rates when $T\to+\infty$, for estimating the parameter $f=((\nu_k)_{k=1,\ldots,K}, (h_{\ell,k})_{k,\ell=1,\ldots,K})$ by using realizations of the multivariate process $(N_t^k)_{k=1,\ldots,K}$ for $t\in [0,T]$. Analyzing asymptotic properties in the setting where $T\to +\infty$ means that the observation time becomes very large hence providing a large number of observations. Note that along the paper, $K$, the number of observed processes, is assumed to be fixed and can be viewed as a constant. Considering $K\to +\infty$ is a very challenging problem beyond the scope of this paper. Using the general theory of \citet{ghosal:vdv:07}, we express the posterior concentration rates in terms of simple and usual quantities associated to the prior on $f$ and under mild conditions on the true parameter. Two types of posterior concentration rates are provided: the first one is in terms of the $\L_1$-distance on the stochastic intensity functions $(\lambda^k)_{k=1,\ldots,K}$ and the second one is in terms of the $\L_1$-distance on the parameter $f$ (see precise notations below).  To the best of our knowledge, these are  the first  theoretical results on Bayesian nonparametric inference in Hawkes models. Moreover, these are the first results on $\L_1$-convergence rates for the interaction functions $h_{\ell, k}$. In the frequentist literature, theoretical results are given in terms of  either the $\L_2$-error of the stochastic intensity, as in \cite{arXiv:1501.00725} and \cite{MR3480107}, or in terms of the $\L_2$-error on the interaction functions themselves, the latter being much more involved, as in  \cite{MR2722456} and  \cite{HRR}. In \cite{MR2722456}, the estimator is constructed using  a frequentist model selection procedure with a specific family of models based on piecewise constant functions. In the multivariate setting of \cite{HRR}, more generic families of approximation models are considered (wavelets of Fourier dictionaries) and then combined with a Lasso procedure, but under a somewhat restrictive assumption on the size of models that can be used to construct the estimators (see Section 5.2 of \cite{HRR}). Our general results do not involve such strong conditions and therefore allow us to work with approximating families of models that are quite general. In particular, we can apply them to two families of prior models on the interaction functions $h_{\ell, k}$:  priors based on piecewise constant functions, with regular or random partitions  and  priors based on mixtures of Betas distributions. From the posterior concentration rates, we also deduce a frequentist convergence rate for the posterior mean, seen as a point estimator.
 We finally propose an MCMC algorithm to simulate from the posterior distribution for the priors constructed from piecewise constant functions and a simulation study is conducted to illustrate our results. 
%%%%%%%%%%%%%%%%%%%%%%
\subsection{Overview of the paper}
In Section~\ref{sec:main}, Theorem~\ref{th:d1} first states the posterior convergence rates obtained for stochastic intensities. Theorem~\ref{th:slices} constitues a variation of this first result. From these results, we derive $\L_1$-rates for the parameter $f$ (see Theorem~\ref{th:L1}) and for the posterior mean (see Corollary~\ref{coro:L1hatf}). Examples of prior models satisfying conditions of these theorems are given in Section~\ref{sec:prior-model}. In Section \ref{sec:numerical Hawkes}, numerical results are provided. 
%%%%%%%%%%%%%%%%%%%%%%
\subsection{Notations and assumptions} \label{sec:notation}
We denote by $f_0 =((\nu_k^0)_{k=1,\ldots,K}, (h_{\ell,k}^0)_{k,\ell=1,\ldots,K})$  the true parameter and assume that the interaction functions $h_{\ell,k}^0$ are supported by a compact interval $[0,A]$, with $A$ assumed to be known. Given a parameter $f =  ((\nu_k)_{k=1,\ldots,K}, (h_{\ell,k})_{k,\ell=1,\ldots,K})$, we denote by $\|\rho\|$ the spectral norm of the matrix $\rho$ associated with $f$ and defined in \eqref{rho-matrix}. We recall that $\|\rho\|$ provides an upper bound of the spectral radius of $\rho$ and we set 
$$\mathcal H = \left\{(h_{\ell,k})_{k,\ell=1,\ldots,K}; \,  h_{\ell,k} \geq 0, \, \mbox{support}(h_{\ell,k})\subset [0,A], \, \rho_{\ell,k}<\infty, \, \forall\, k, \ell =1,\ldots,K, \, \| \rho \| < 1\right\}$$ 
and 
$$\mathcal F=  \{ f = ((\nu_k)_{k=1,\ldots,K}, (h_{\ell,k})_{k,\ell=1,\ldots,K}); \ 0<\nu_k<\infty, \ \forall\, k=1,\ldots,K, \ (h_{\ell,k})_{k,\ell=1,\ldots,K} \in \mathcal H\}.$$ 
We assume that $f_0\in\mathcal F$ and denote by $\rho^0$ the matrix such that 
$\rho^0_{\ell,k}=\int_0^A h^0_{\ell,k}(t)dt.$

For any function $h:\R\mapsto\R$, we denote by $\|h\|_p$ the $\L_p$-norm of $h$. With a slight abuse of notations, we also use for $f=((\nu_k)_{k=1,\ldots,K}, (h_{\ell,k})_{k,\ell=1,\ldots,K})$ and $f'=((\nu_k)_{k=1,\ldots,K}, (h'_{\ell,k})_{k,\ell=1,\ldots,K})$ belonging to~$\mathcal F$
\begin{equation}\label{L1-dist}
\| f- f'\|_1 = \sum_{k=1}^K |\nu_k  - \nu'_k| + \sum_{k=1}^K\sum_{\ell=1}^K \| h_{\ell,k} - h'_{\ell,k}\|_1.
\end{equation}
Finally, we consider $d_{1,T}$, the following stochastic distance on $\mathcal F$:
%$$d_2^2(f,f')=\sum_{k=1}^K\left((\nu_k-\nu'_k)^2+\sum_{\ell=1}^K\|h_{\ell,k}-h'_{\ell,k}\|_2^2\right),$$ 
%$$ d_{2,T}^2(f,f')=\frac{1}{T}\sum_{k=1}^K\int_0^T(\lambda^k_t(f)-\lambda^k_t(f'))^2dt\quad\mbox{and}\quad 
$$d_{1,T} (f, f') = \frac{ 1 }{ T } \sum_{k=1}^K \int_0^T | \lambda^k_t(f) - \lambda^k_t(f')| dt,$$
where $ \lambda^k_t(f)$ and $\lambda^k_t(f')$ denote the stochastic intensity (introduced in \eqref{intensi}) associated with $f$ and $f'$ respectively. We denote by ${\mathcal N}(u, \mathcal H_0, d)$ the covering number of a set $\mathcal H_0$ by balls with respect to the metric $d$ with radius $u$. 
We set for any $\ell$, $\mu_\ell^0$ the mean of $\lambda_t^{\ell}(f_0)$ under $\P_0$
$$\mu_\ell^0=\E_0[\lambda^\ell_t(f_0)],$$
where $\P_0$ denotes the stationary distribution associated with $f_0$ and $\E_0$ is the expectation associated with $\P_0$. We also write $u_T \lesssim v_T$ if $|u_T/v_T|$ is bounded when $T\to+\infty$ and similarly $u_T \gtrsim v_T$ if $|v_T/u_T|$ is bounded.
%%%%%%%%%%%%%%%%%%%%%%
%%%%%%%%%%%%%%%%%%%%%%
\section{Main results} \label{sec:main}
This section contains main results of the paper. We first provide an expression for the posterior distribution. 
%%%%%%%%%%%%%%%%%%%%%%
\subsection{Posterior distribution} \label{sec:likelihood}
Using Proposition 7.3.III of \cite{MR1950431}, and identifying a multivariate Hawkes process as a specific marked Hawkes process,
we can write the log-likelihood function of the process observed on the interval $[0,T]$, conditional on $\mathcal G_{0^-}=\sigma \left( N_t^k, \ t <0 , 1\leq k\leq K \right) $,   as
%$$
%L(\lambda):=\int_0^T\int_{\mathcal K}\log(\lambda(t,k))dN_{t,k}-\int_0^T\int_{\mathcal K}\lambda(u,k)dudc(k)
%$$
%where $\mathcal K=\{1,\ldots,K\}$ and $c$ denotes the counting measure on $\mathcal K$. Writing 
%$N_{t,k}=N^k_t$ and
%$$\lambda(t,k)=\lambda_k(t)=\nu_k+\sum_{\ell=1}^K\int_{-\infty}^{t-}h_{\ell, k}(t-u)dN^{\ell}_u,$$
%we obtain
\begin{eqnarray}\label{loglikely}
L_T(f)&:=&\sum_{k=1}^K\left[\int_0^T\log(\lambda_t^k(f))dN^k_t-\int_0^T\lambda_t^k(f)dt\right].
%\nonumber\\
%&=&\sum_{k=1}^K\left[\int_0^T\log\left(\nu_k+\sum_{\ell=1}^K\int_{-\infty}^{t-}h_{\ell, k}(t-u)dN^{\ell}_u\right)dN^k_t-\int_0^T\left(\nu_k+\sum_{\ell=1}^K\int_{-\infty}^{t-}h_{\ell, k}(t-u)dN^{\ell}_u\right)dt\right].
\end{eqnarray}
With a slight abuse of notation, we shall also denote $L_T(\lambda)$ instead of $L_T(f)$.

Recall that we restrict ourselves to the setup where for all $\ell,k$,  $h_{\ell, k}$ has support included in $[0,A]$ for some fixed $A>0$. This hypothesis is very common in the context of Hawkes processes, see \cite{HRR}. 
Note that, in this case, the conditional distribution of $(N^k)_{k=1,\ldots,K}$ observed on the interval $[0,T]$ given  $\mathcal G_{0^-}$  is equal to its conditional distribution given $\mathcal G_{[-A,0[} = \sigma \left( N_t^k,-A\leq  t <0 , 1\leq k \leq K \right) $. 

Hence, in the following, we assume that we observe the process $(N^k)_{k=1,\ldots,K}$ on $[-A, T]$, but we base our inference on the log-likelihood \eqref{loglikely}, which is  associated to the observation of $(N^k)_{k=1,\ldots,K}$ on $[0,T]$. We consider a Bayesian nonparametric approach and denote by $\Pi$ the prior distribution on the parameter $f=((\nu_k)_{k=1,\ldots,K}, (h_{\ell,k})_{k,\ell=1,\ldots,K})$.
%We will hereafter denote the conditional (on $\mathcal G_0$) log-likelihood either by $L(\lambda^*)$ of by $L(f)$. 
The posterior distribution  is then formally equal to 
\begin{equation*}
\Pi\left( B | N, \mathcal G_{0^-} \right) = \frac{ \int_{B} \exp(L_T(f)) d\Pi(f |\mathcal G_{0^-})}{ \int_{\mathcal F} \exp(L_T(f)) d\Pi(f |\mathcal G_{0^-})}.
\end{equation*}
We approximate it by the following pseudo-posterior distribution, which we write $\Pi\left( \cdot | N\right)$
\begin{equation}\label{posterior}
\Pi\left( B | N \right) = \frac{ \int_{B}\exp(L_T(f))  d\Pi(f)}{ \int_{\mathcal F} \exp(L_T(f))  d\Pi(f )},
\end{equation}
which thus corresponds to choosing $d\Pi(f) = d\Pi(f | \mathcal G_0^{-})$. 
%Note that Equation \eqref{posterior} can be interpreted as defining as prior distribution $d\Pi(f |\mathcal G_{0^-})= d\Pi(f )$ where $\mathcal G_{0^-}$ is understood as past observations which could be used to construct an informative prior. 
%%%%%%%%%%%%%%%%%%%%%%%%%%%%%
\subsection{Posterior convergence rates for $d_{1,T}$ and $\L_1$-metrics}
In this section we give two results of posterior concentration rates, one in terms of the stochastic distance $d_{1,T} $ and another one in terms of the $\L_1$-distance, which constitutes the main result of this paper.  We define  
$$ \Omega_T =\left\{\max_{\ell\in\{1,\ldots,K\}}\sup_{t\in [0,T]}N^{\ell}[t-A,t)\leq C_\alpha\log T\right\}\cap \left\{ \sum_{\ell=1}^K \left| \frac{N^\ell[-A,T]}{T}-\mu_\ell^0\right| \leq \delta_T \right\}$$ with $\delta_T = \delta_0 (\log T)^{3/2}/\sqrt{T}$ and $\delta_0>0$ and $C_{\alpha}$ two positive constants not depending on $T$. From Lemmas~\ref{control} and \ref{lem:N0T} in Section~\ref{sec:lemmas}, we have that for all $\alpha >0$ there exist $C_\alpha>0$ and $\delta_0>0$ only depending on $\alpha$ and $f_0$ such that 
\begin{equation}\label{boundNT}
\P_0 \left( \Omega_{T}^c \right) \leq T^{-\alpha},
\end{equation}
when $T$ is large enough. In the sequel, we take $\alpha>1$ and $C_\alpha$ accordingly. Note in particular that, on~$\Omega_T $, $$\sum_{\ell=1}^K N^\ell[-A,T]  \leq N_0 T,$$ with $N_0  =1+ \sum_{\ell=1}^K \mu_\ell^0$, when $T$ is large enough. 
% and denote by $\mathcal F_{j} = \{ f \in  \mathcal F;  \nu_\ell \leq \nu_{\ell}^0 + K N_0  + 2 j \epsilon_T, \, \forall \ell \leq K \}$.
 We then have the following theorem. 
\begin{theorem}\label{th:d1}
Consider the multivariate Hawkes process $(N^k)_{k=1,\ldots,K}$ observed on $[-A, T]$, with likelihood given by \eqref{loglikely}.
Let $\Pi$ be a prior distribution on ${\mathcal F}.$
Let $\epsilon_T$ be a positive sequence such that $\epsilon_T= o(1)$ and $$\log\log(T)\log^3(T)=o(T \epsilon_T^2).$$ For $B>0$, we consider
 $$B(\e_T, B):=\left\{(\nu_k,(h_{\ell,k})_{\ell})_k:\quad \max_k|\nu_k-\nu^0_k|\leq\e_T, \, \max_{\ell,k}\|h_{\ell,k}-h^0_{\ell,k}\|_2 \leq \e_T, \,\max_{\ell,k}\|h_{\ell,k} \|_\infty \leq B  \right\}$$
and assume following conditions are satisfied for $T$ large enough.
\begin{itemize}
\item[(i)] There exists $c_1>0$ and $B >0 $ such that 
$$ \Pi\left( B(\epsilon_T , B) \right)  \geq e^{ - c_1 T \epsilon_T^2}.$$
\item[(ii)] There exists a subset $\mathcal H_T \subset \mathcal H$, such that  $$ \frac{\Pi\left( \mathcal H_T^c \right)}{ \Pi\left( B(\epsilon_T , B) \right)}  \leq e^{- (2\kappa_T+3)T\epsilon_T^2},$$
where $\kappa_T:=\kappa\log(r_T^{-1}) \asymp \log \log T$, with $r_T$ defined in \eqref{deltaT} and $\kappa$ defined in \eqref{kappa}.
\item[(iii)]   There exist $\zeta_0>0$ and $x_0>0$ such that 
$$\log {\mathcal N}(  \zeta_0 \epsilon_T , \mathcal H_T , \| . \|_1) \leq  x_0 T   \epsilon_T^2.$$
\end{itemize}
Then, there exist $M>0$ and $C>0$ such that 
\begin{equation*}
\E_0\left[\Pi\left( d_{1,T}( f_0, f) > M\sqrt{\log \log T} \epsilon_T | N \right) \right]  \leq \frac{C\log\log(T)\log^3(T)}{T \epsilon_T^2}+\P_0 (\Omega_T^c) +o(1) = o(1).
\end{equation*}
\end{theorem}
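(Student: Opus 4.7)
The plan is to deploy the Ghosal--van der Vaart (2007) machinery for posterior concentration, tailored to the point-process log-likelihood \eqref{loglikely}. I would rewrite the target probability as
\begin{equation*}
\Pi\bigl( d_{1,T}(f_0,f) > M\sqrt{\log\log T}\,\epsilon_T \bigm| N \bigr) = \frac{N_T}{D_T}, \qquad D_T = \int_{\mathcal F} e^{L_T(f)-L_T(f_0)}\, d\Pi(f),
\end{equation*}
with $N_T$ the analogous integral over the complement of the $d_{1,T}$-ball. All estimates are carried out on the event $\Omega_T$, whose complement contributes the $\P_0(\Omega_T^c)$ term via \eqref{boundNT}; the crucial feature of $\Omega_T$ is that on it the intensities satisfy $\nu_k^0/2 \leq \lambda^k_t(f) \leq C\log T$ uniformly for $f\in B(\epsilon_T,B)$, which follows from $\|h\|_\infty \leq B$ and the sup-bound on $N^\ell[t-A,t)$.

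\textbf{Lower bound on $D_T$.} Restricting integration to $B(\epsilon_T,B)$, I would split $L_T(f_0)-L_T(f)$ into the drift $\sum_k\int_0^T[\lambda^k_t(f_0)\log(\lambda^k_t(f_0)/\lambda^k_t(f))-(\lambda^k_t(f_0)-\lambda^k_t(f))]\,dt$ and the compensated martingale against $dN^k_t-\lambda^k_t(f_0)\,dt$. A second-order Taylor expansion of the logarithm (legitimate on $\Omega_T$ thanks to the two-sided intensity bounds), together with the Fubini identity $\int_0^T\int h(t-u)\,dN^\ell_u\,dt \leq \|h\|_1 N^\ell[-A,T]$ and the $L_2$-control inside $B(\epsilon_T,B)$, yields a drift bound of order $T\epsilon_T^2\log^3 T$. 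A Bernstein inequality for point-process martingales then handles the compensated integral at the same order. Combined with (i), this gives $D_T \geq \Pi(B(\epsilon_T,B))\,e^{-cT\epsilon_T^2}$ on an event whose failure probability is $O(\log\log T \cdot \log^3 T/(T\epsilon_T^2))$---exactly the quantitative term in the conclusion.

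\textbf{Upper bound on $N_T$.} Split $N_T=N_T^{(1)}+N_T^{(2)}$ over $\mathcal H_T^c$ and over $\mathcal H_T\cap\{d_{1,T}(f_0,f)>M\sqrt{\log\log T}\epsilon_T\}$ respectively. Condition (ii) together with the denominator bound immediately kills $N_T^{(1)}/D_T$. For $N_T^{(2)}$, the deterministic Lipschitz estimate $d_{1,T}(f,f')\leq C_0\|f-f'\|_1$ (valid on $\Omega_T$ by Fubini applied exactly as above) transfers the entropy bound (iii) into an entropy bound for $d_{1,T}$ at scale $\zeta_0\epsilon_T$. For each center $f_j$ of the corresponding net I construct a likelihood-ratio type test separating $f_0$ from the $d_{1,T}$-ball around $f_j$; Bernstein inequalities applied to compensated counting processes on $\Omega_T$ give exponential control of both error types by $\exp(-cTd_{1,T}(f_0,f_j)^2/\log T)$, and the union bound over the net of cardinality $e^{x_0 T\epsilon_T^2}$ forces the separation radius $M\sqrt{\log\log T}\epsilon_T$---the extra $\sqrt{\log\log T}$ precisely compensating the $\log T$ factor inherited from the sup-bound on intensities on $\Omega_T$.

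\textbf{Main obstacle.} The trickiest ingredient is the construction of these tests. Unlike in i.i.d.\ settings where Hellinger-based tests give clean exponential bounds, the Hawkes log-likelihood ratio is a stochastic integral against a random counting measure whose intensity is neither uniformly upper nor uniformly lower bounded in $t$. The event $\Omega_T$ is engineered so that $L_\infty$ control is available at the cost of logarithmic factors, and every Bernstein estimate then inherits them; this is simultaneously the source of the hypothesis $\log\log T\cdot\log^3 T = o(T\epsilon_T^2)$ and of the $\sqrt{\log\log T}$ slack in the separation radius.
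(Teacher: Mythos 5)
Your overall skeleton -- the Ghosal--van der Vaart decomposition, working throughout on $\Omega_T$, lower-bounding $D_T$ via Kullback--Leibler control on $B(\epsilon_T,B)$, building tests for the numerator, and transferring the $\|\cdot\|_1$-entropy to $d_{1,T}$-entropy via a Fubini/Lipschitz bound on $\Omega_T$ -- is indeed the architecture of the paper's proof. But two of the quantitative pivots you assert do not hold, and the bookkeeping of logarithmic factors is internally inconsistent.

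\textbf{The denominator.} You claim the drift (Kullback--Leibler) term is of order $T\epsilon_T^2 \log^3 T$ and then conclude $D_T \gtrsim \Pi(B(\epsilon_T,B))\,e^{-cT\epsilon_T^2}$. These two statements cannot coexist: if the KL divergence were really $O(T\epsilon_T^2\log^3 T)$ on $B(\epsilon_T,B)$, the restricted integral $\int_B e^{L_T(f)-L_T(f_0)}d\Pi$ could be as small as $\Pi(B)e^{-cT\epsilon_T^2\log^3 T}$, not $\Pi(B)e^{-cT\epsilon_T^2}$. In fact the correct KL order, as in Lemma~\ref{lemma:Kullback}, is $\kappa_T T\epsilon_T^2$ with $\kappa_T = \kappa\log(r_T^{-1}) \asymp \log\log T$: on $\Omega_T$ the ratio $\lambda_t^k(f)/\lambda_t^k(f_0)$ is bounded below by $r_T \asymp 1/\log T$ (cf.\ \eqref{deltaT}), and the inequality $\Psi(u) \le 4\log(r_T^{-1})(u-1)^2$ for $u\ge r_T$ costs a factor $\log(r_T^{-1}) \asymp \log\log T$, not $\log^3 T$. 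This factor is precisely why the theorem is stated with $D_T \gtrsim \Pi(B(\epsilon_T,B))e^{-(2\kappa_T+2)T\epsilon_T^2}$, and the $\log^3 T$ factor lives in a \emph{different} place: the failure probability.

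\textbf{The fluctuation of $L_T(f_0)-L_T(f)$.} You propose to control the compensated martingale part by a Bernstein inequality. This is where a genuine obstacle sits: the predictable quadratic variation $\int_0^T\log^2(\lambda^k_t(f_0)/\lambda^k_t(f))\,\lambda^k_t(f_0)\,dt$ is random, and on $\Omega_T$ the only deterministic pointwise bounds are $|\log(\cdot)|\lesssim \log\log T$ and $\lambda^k_t(f_0)\lesssim\log T$, giving a quadratic variation bound of order $T\log T(\log\log T)^2$. Plugging $v\asymp T\log T(\log\log T)^2$ into $\sqrt{2vx}\le T\epsilon_T^2$ forces $x\lesssim T\epsilon_T^4/(\log T(\log\log T)^2)$, which under the sole hypothesis $\log\log T\cdot\log^3 T=o(T\epsilon_T^2)$ (and $\epsilon_T=o(1)$) can tend to zero: Bernstein gives nothing. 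The paper sidesteps this by \emph{not} using an exponential inequality at all for the denominator. Instead it blocks $[0,T]$ into $Q_T\asymp T/\log T$ intervals, uses the coupling of Proposition~3.1 of \cite{RBR} (small probability that the extinction time exceeds the block gap) to produce genuinely i.i.d.\ copies $(G_q)$ of the block contributions, and applies a second-moment Chebyshev bound. The product $\log(r_T^{-1})\mathcal N_T^2 (T/Q_T) \asymp \log\log T\cdot\log^3 T$ is exactly where the $\log^3 T$ term enters the final polynomial failure probability, not the drift.

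\textbf{The $\sqrt{\log\log T}$ slack.} You attribute it to a $\log T$ factor inherited in the Bernstein test-error rate. That accounting does not close: if the error rate per test center were $\exp(-cTd^2/\log T)$, the union bound against $e^{x_0T\epsilon_T^2}$ net points would force $M_T\gtrsim\sqrt{\log T}$, a strictly worse rate. In fact the paper's tests (Lemma~\ref{lem:test:d1}) give error rates $\exp(-x_1Tj\epsilon_T(\sqrt{\mu_\ell^0}\wedge j\epsilon_T))$ with \emph{no} $\log T$ loss, because on $\Omega_T$ the total compensator $\int_0^T\lambda^\ell_s(f_0)ds\le 2T\mu_\ell^0$ is $O(T)$, not $O(T\log T)$ --- this is what the second defining event of $\Omega_T$ (control on $N^\ell[-A,T]/T$) buys you, over and above the pointwise $\log T$ bound. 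The true source of the $\sqrt{\log\log T}$ slack is the prefactor $e^{2(\kappa_T+1)T\epsilon_T^2}$ coming out of the denominator lower bound; balancing it against the test error $e^{-x_2TM_T^2\epsilon_T^2/2}$ requires $M_T^2\gtrsim\kappa_T\asymp\log\log T$.

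In short: you have the right framework, but you have not actually produced a working lower bound for $D_T$, and the two log factors you invoke are attributed to the wrong steps and would, if used as you describe, either break the argument or yield a worse rate.
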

Assumptions (i), (ii) and (iii) are very common in the literature about posterior convergence rates. As expressed by Assumption (ii), some conditions are required on the prior on $\mathcal H_T$ but not on $\mathcal F_T$. Except the usual concentration property of $\nu$ around $\nu^0$ expressed in the definition of $B(\e_T, B)$, which is in particular satisfied if $\nu$ has a positive continuous density with respect to Lebesgue measure,  we have no further condition on the tails of the distribution of $\nu$.
\begin{remark}\label{rem:supsorm}
As appears in the proof of Theorem \ref{th:d1},  the term $\sqrt{\log \log T}$ appearing in the posterior concentration rate can be dropped if $B(\epsilon_T, B)$ is replaced by 
$$B_\infty (\epsilon_T, B) = \left\{(\nu_k,(h_{\ell,k})_{\ell})_k:\quad \max_k|\nu_k-\nu^0_k|\leq\e_T, \, \max_{\ell,k}\|h_{\ell,k}-h^0_{\ell,k}\|_\infty \leq \e_T \right\},$$ in Assumption (i). In this case, $r_T=1/2$ in Assumption (ii) and $\kappa_T$ does not depend on $T$. This is used for instance in Section \ref{sec:sec:randomhistogram} to study random histograms priors whereas mixtures of Beta priors are controlled using the $\L_2$-norm. 
\end{remark}
Similarly to other general theorems on posterior concentration rates, we can consider some variants. Since the metric $d_{1,T}$ is stochastic, we cannot use slices in the form $d_{1,T} (f_0,f) \in (j \epsilon_T, (j+1)\epsilon_T)$ as in Theorem 1 of \citet{ghosal:vdv:07}, however we can consider other forms of slices, using a similar idea as in Theorem 5 of \citet{ghosal:vdv:mixture:07}. This is presented in the following theorem.
\begin{theorem}\label{th:slices}
Consider the setting and assumptions of Theorem~\ref{th:d1} except that assumption (iii) is replaced by the following one: There exists a sequence of sets $(\mathcal H_{T,i})_{i\geq 1} \subset \mathcal H$ with $\cup_i \mathcal H_{T,i}=\mathcal H_T$ and $\zeta_0>0$ such that 
\begin{equation}\label{cond:slices} 
\sum_{i=1}^\infty {\mathcal N}(\zeta_0 \epsilon_T , \mathcal H_{T,i} , \| . \|_1)\sqrt{\Pi(\mathcal H_{T,i}) } e^{- x_0 T\epsilon_T^2} = o(1),
\end{equation}
for some positive constant $x_0>0$.
Then, there exists $M>0$ such that 
$$
\E_0\left[\Pi\left( d_{1,T}( f_0, f) > M \sqrt{\log \log T} \epsilon_T | N \right) \right]   = o(1).
$$

\end{theorem}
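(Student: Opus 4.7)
The plan is to mimic the proof of Theorem~\ref{th:d1} and modify only the sieve-testing step, in the spirit of Theorem~5 of \citet{ghosal:vdv:mixture:07}. Two ingredients from the proof of Theorem~\ref{th:d1} can be reused verbatim: (a) the denominator lower bound
$$\int e^{L_T(f)-L_T(f_0)}\,d\Pi(f)\;\geq\; e^{-(c_1+1)T\epsilon_T^2}\,\Pi(B(\epsilon_T,B))\;\geq\;e^{-(2c_1+1)T\epsilon_T^2},$$
holding on an event $D_T$ with $\P_0(D_T^c)=o(1)$, obtained from assumption~(i) via the usual Kullback--Leibler argument on $\Omega_T$; and (b) the bound $\E_0\Pi(\mathcal H_T^c\mid N)=o(1)$, immediate from (ii) combined with (a). It therefore remains to show $\E_0\Pi(A_T\cap\mathcal H_T\mid N)=o(1)$ for $A_T=\{f:d_{1,T}(f_0,f)>M\sqrt{\log\log T}\,\epsilon_T\}$.

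For each $i$ I would pick a minimal $\zeta_0\epsilon_T$-cover of $\mathcal H_{T,i}$ in $\|\cdot\|_1$ with centers $\{f_{i,j}\}_{j=1}^{N_i}$, $N_i=\mathcal N(\zeta_0\epsilon_T,\mathcal H_{T,i},\|\cdot\|_1)$, and apply the local test construction already developed for Theorem~\ref{th:d1}---Bernstein/martingale deviation for the compensated point-process martingale restricted to $\Omega_T$, composed with a Bayes-factor-type threshold for the integrated likelihood ratio over the $\|\cdot\|_1$-ball $V_{i,j}$ around $f_{i,j}$---to obtain, for each $(i,j)$ with $d_{1,T}(f_0,f_{i,j})>(M/2)\sqrt{\log\log T}\,\epsilon_T$, a test $\phi_{i,j}$ with
$$\E_0[\phi_{i,j}] \leq e^{-x_1 T\epsilon_T^2}\sqrt{\Pi(\mathcal H_{T,i})},\qquad \sup_{f\in V_{i,j}\cap\mathcal H_{T,i}}\E_f\bigl[(1-\phi_{i,j})\1_{\Omega_T}\bigr] \leq e^{-x_1 T\epsilon_T^2},$$
for a constant $x_1$ that can be taken as large as we like by enlarging $M$. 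The $\sqrt{\Pi(\mathcal H_{T,i})}$ factor in the type-I bound is engineered by thresholding $\int_{V_{i,j}}(p_f/p_0)d\Pi(f)$ at $e^{2x_1 T\epsilon_T^2}$ and applying Markov's inequality to its square root, which via Jensen yields $\E_0[\phi_{i,j}]\leq e^{-x_1 T\epsilon_T^2}\sqrt{\Pi(V_{i,j})}$. Set $\phi=\max_{i,j}\phi_{i,j}$.

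Then split
$$\E_0\Pi(A_T\cap\mathcal H_T\mid N)\;\leq\; \E_0[\phi] \;+\; \E_0\bigl[(1-\phi)\,\Pi(A_T\cap\mathcal H_T\mid N)\,\1_{D_T\cap\Omega_T}\bigr]\;+\;o(1).$$
The first term is at most $\sum_i N_i\sqrt{\Pi(\mathcal H_{T,i})}\,e^{-x_1 T\epsilon_T^2}$, which is $o(1)$ by \eqref{cond:slices} as soon as $x_1\geq x_0$. For the second term, use the denominator lower bound, Fubini, and a change of measure slice by slice:
$$\E_0\!\!\int_{V_{i,j}}\!\!(1-\phi_{i,j})\,(p_f/p_0)\,\1_{\Omega_T}\,d\Pi(f)\;\leq\;\int_{V_{i,j}\cap\mathcal H_{T,i}}\!\!\E_f[(1-\phi_{i,j})\1_{\Omega_T}]\,d\Pi(f)\;\leq\;e^{-x_1 T\epsilon_T^2}\,\Pi(V_{i,j}\cap\mathcal H_{T,i}),$$
summing over $j$, bounding $\Pi(\mathcal H_{T,i})\leq\sqrt{\Pi(\mathcal H_{T,i})}$, and absorbing the denominator factor $e^{(2c_1+1)T\epsilon_T^2}$, the second term is at most $e^{(2c_1+1)T\epsilon_T^2}\sum_i N_i\sqrt{\Pi(\mathcal H_{T,i})}\,e^{-x_1 T\epsilon_T^2}$, again $o(1)$ by \eqref{cond:slices} provided $x_1\geq x_0+2c_1+1$.

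The main obstacle is calibrating the local tests $\phi_{i,j}$ so that the prior-mass correction $\sqrt{\Pi(\mathcal H_{T,i})}$ enters the type-I bound while the type-II bound remains uniform over the $\|\cdot\|_1$-ball $V_{i,j}$. The $\sqrt{\log\log T}$ slack in the separation radius---inherited from the peeling argument already used in Theorem~\ref{th:d1} to handle the stochastic nature of $d_{1,T}$---and the restriction to $\Omega_T$ provide exactly the room in the Bernstein/martingale bound needed to absorb this extra $\sqrt{\Pi(\mathcal H_{T,i})}$ correction; the rest is bookkeeping matching the exponents to the form of \eqref{cond:slices}.
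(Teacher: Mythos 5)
Your overall scaffolding is right (sieve, local tests, denominator lower bound from Lemma~\ref{lemma:Kullback}, change of measure on $(1-\phi)$), but the device you propose for getting the crucial $\sqrt{\Pi(\mathcal H_{T,i})}$ factor into the type-I error does not work. You define a Bayes-factor test $\psi_{i,j}=\1\{\int_{V_{i,j}}(p_f/p_0)d\Pi(f)>e^{2x_1T\epsilon_T^2}\}$ and correctly note that Markov plus Jensen gives $\E_0[\psi_{i,j}]\leq e^{-x_1T\epsilon_T^2}\sqrt{\Pi(V_{i,j})}$. But there is no generic exponential bound on the type-II side: for $f\in V_{i,j}$ the quantity $\int_{V_{i,j}}(p_g/p_0)d\Pi(g)$ can be small under $\P_f$ with non-negligible probability when $\Pi(V_{i,j})$ is tiny, and controlling $\P_f(\psi_{i,j}=0)$ would require a localized second-moment or KL-type lower bound that you have not supplied. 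Taking the minimum of $\psi_{i,j}$ with the $d_{1,T}$ distance test from Lemma~\ref{lem:test:d1} keeps the good type-I bound but $\E_f[1-\phi_{i,j}]\leq\E_f[1-\phi^d_{i,j}]+\E_f[1-\psi_{i,j}]$, so the uncontrolled second term ruins the uniform type-II bound; the maximum goes the other way and loses the $\sqrt{\Pi}$ discount. Shifting the martingale threshold instead would need a deterministic shift of order $\log(1/\Pi(\mathcal H_{T,i}))$, which is unbounded over $i$; the $\sqrt{\log\log T}$ slack you invoke is a fixed $\kappa_T\asymp\log\log T$ budget absorbing the Kullback--Leibler denominator, not an $i$-dependent budget, so it cannot fill this hole.

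The paper avoids the problem by not modifying the tests at all. It keeps $\phi_{i,j}$ as the max of the tests of Lemma~\ref{lem:test:d1} over a covering of $\mathcal F_j\cap\mathcal H_{T,i}$, and inserts $\sqrt{\pi_{T,i}}$ through the way it decomposes the numerator-over-denominator ratio $\Num_{T,ij}/D_T$ rather than through the test itself. Writing $\pi_{T,i}=\Pi(\mathcal H_{T,i})$ one splits according to whether $\sqrt{\pi_{T,i}}\phi_{i,j}>e^{-\gamma T(j\epsilon_T\wedge j^2\epsilon_T^2)}$ on $\Omega_T$ (for a fixed $\gamma<x_2$). Since $\phi_{i,j}$ is an indicator, Markov gives
$$\P_0\bigl(\sqrt{\pi_{T,i}}\,\phi_{i,j}>e^{-\gamma T(\cdot)}\cap\Omega_T\bigr)\ \leq\ \sqrt{\pi_{T,i}}\,e^{\gamma T(\cdot)}\,\E_0[\1_{\Omega_T}\phi_{i,j}],$$
which, after summing over $j\geq M_T$ and $i$, produces exactly the quantity $\sum_i\sqrt{\pi_{T,i}}\,\mathcal N(\zeta_0\epsilon_T,\mathcal H_{T,i},\|\cdot\|_1)$ of condition \eqref{cond:slices} times a factor $e^{(\gamma-x_2)TM_T^2\epsilon_T^2/2 + K\log T}$. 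On the complementary event one uses $\phi_{i,j}/\sqrt{\pi_{T,i}}\leq e^{-\gamma T(\cdot)}/\pi_{T,i}$ so that $\phi_{i,j}\Num_{T,ij}\leq e^{-\gamma T(\cdot)}\sqrt{\pi_{T,i}}\int_{\mathcal F_T}\1_{S_j}e^{L_T(f)-L_T(f_0)}d\Pi(f|\mathcal H_{T,i})$, whose $\P_0$-expectation on $\Omega_T$ is at most $1$ by change of measure; summing over $i,j$ and multiplying by the deterministic denominator factor $e^{2(\kappa_T+1)T\epsilon_T^2}/\Pi(B(\epsilon_T,B))$ then gives $o(1)$ for $M$ large. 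The $(1-\phi_{ij})$ term is handled, as you wrote, by Fubini and the uniform type-II bound from Lemma~\ref{lem:test:d1}, with no $\sqrt{\pi_{T,i}}$ needed. In short, your plan needs the $\sqrt{\pi_{T,i}}$ to live in the test; the paper's proof puts it in the peeling of the posterior ratio, and this is what makes the argument go through without any new testing construction.
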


The posterior concentration rates of Theorems \ref{th:d1} and \ref{th:slices} are in terms of the metric $d_{1,T}$ on the intensity functions, which are data dependent and therefore not completely satisfying to understand concentration around the objects of interest namely $f_0$. We now use Theorem \ref{th:d1} to provide a general result to derive a posterior concentration rate in terms of the $\L_1$-norm.
\begin{theorem}\label{th:L1}
Assume that the prior $\Pi$ satisfies following assumptions.
\begin{itemize}
\item[(i)] There exists $\varepsilon_T = o(1)$ such that $ \varepsilon_T\geq \delta_T$ (see the definition of $\Omega_T$) and $c_1>0$ such that 
\begin{equation*}
\E_0 \left[ \Pi\left( A_{\varepsilon_T}^c | N \right)\right] = o(1) \quad \& \quad \P_0\left( D_T < e^{-c_1 T \varepsilon_T^2 } \right) = o(1), 
\end{equation*}
where $D_T = \int_{\mathcal F} e^{L_T(f) - L_T(f_0)}d\Pi(f) $ and $A_{\varepsilon_T}=\{ f ; d_{1,T}(f_0,f) \leq \varepsilon_T\}$.
\item[(ii)] 
The prior on $\rho$ satisfies : for all $u_0>0$, when $T$ is large enough,
 \begin{equation}\label{uT}
 \Pi( \| \rho\| > 1 -u_0 (\log T)^{1/6} \varepsilon_T^{1/3} ) \leq e^{- 2c_1 T\varepsilon_T^2}.
 \end{equation}
 
%\textcolor{red}{The prior on $\rho$ satisfies
% $$ \Pi( \| \rho\| > 1 - u_T) \leq e^{- 2c_1 T\varepsilon_T^2 },$$
% for $T$ large enough, with $u_T\to 0$ such that
% \begin{equation}\label{uT}
%\varepsilon_T^2 \log T=o(u_T^6).
% \end{equation}
% }
\end{itemize}
Then, for any $w_T \rightarrow + \infty$,
\begin{equation}\label{L1conc}
\E_0\left[\Pi \left( \| f - f_0\|_1 > w_T\varepsilon_T | N\right)\right]=o(1).
\end{equation}
\end{theorem}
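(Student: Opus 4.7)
The plan is to combine Assumption~(i), which provides posterior concentration at rate $\varepsilon_T$ in the intensity-level distance $d_{1,T}$, with Assumption~(ii), which enforces a prior-induced spectral gap on $\rho$, and then to establish the crucial deterministic inverse inequality between $d_{1,T}$ and $\|\cdot\|_1$ on the high-probability event $\Omega_T$.

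First, Assumption~(i) directly gives $\E_0[\Pi(A_{\varepsilon_T}^c \,|\, N)]=o(1)$. The evidence lower bound $D_T\geq e^{-c_1T\varepsilon_T^2}$ (holding with $\P_0$-probability $1-o(1)$) combined with Assumption~(ii) via the standard Bayesian inequality $\Pi(B\,|\,N)\leq \Pi(B)/D_T$ gives
\begin{equation*}
\Pi\bigl(\|\rho\|>1-u_0(\log T)^{1/6}\varepsilon_T^{1/3}\,\big|\,N\bigr) \leq \frac{e^{-2c_1T\varepsilon_T^2}}{e^{-c_1T\varepsilon_T^2}} = e^{-c_1T\varepsilon_T^2},
\end{equation*}
on an event of $\P_0$-probability $1-o(1)$. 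Hence $\E_0[\Pi(\mathcal{E}_T^c\,|\,N)]=o(1)$ for $\mathcal{E}_T := A_{\varepsilon_T}\cap\{f:\|\rho\|\leq 1-u_0(\log T)^{1/6}\varepsilon_T^{1/3}\}$, and it remains to prove the deterministic inclusion $\mathcal{E}_T\subset\{f:\|f-f_0\|_1\leq w_T\varepsilon_T\}$ on the event $\Omega_T$.

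For the deterministic step, set $a_k = \nu_k - \nu_k^0$, $g_{\ell,k}=h_{\ell,k}-h_{\ell,k}^0$, and
\begin{equation*}
D_t^k := \lambda_t^k(f)-\lambda_t^k(f_0) = a_k + \sum_{\ell=1}^K\int_{t-A}^{t^-} g_{\ell,k}(t-u)\,dN_u^\ell,
\end{equation*}
so that on $\mathcal{E}_T$ one has $\sum_k\int_0^T|D_t^k|\,dt \leq T\varepsilon_T$. By Fubini and the $\Omega_T$-counting bounds ($|N^\ell[-A,T]/T-\mu_\ell^0|\leq\delta_T$ and $N^\ell[t-A,t)\leq C_\alpha\log T$), the time averages $\frac{1}{T}\int_0^T D_t^k\,dt$ are approximately equal to the signed aggregates $a_k+\sum_\ell\mu_\ell^0\int g_{\ell,k}$ up to errors of order $\delta_T+(\log T/T)\sum_\ell\|g_{\ell,k}\|_1$; these already yield $K$ \emph{signed} linear constraints. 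To upgrade to the unsigned norms $\|g_{\ell,k}\|_1$, one invokes $\L_1$--$\L_\infty$ duality, $\int|D_t^k|\,dt=\sup_{\|\psi\|_\infty\leq 1}\int D_t^k\psi(t)\,dt$, and tests $D^k$ against a family of bounded functions $\psi$ that effectively inverts the convolution operator $g\mapsto\sum_\ell g_{\ell,k}\ast dN^\ell$ (via the cluster/Neumann expansion of the Hawkes process, or equivalently via Fourier inversion); this inversion has operator norm at most $(1-\|\rho\|)^{-1}$. Balancing the $(1-\|\rho\|)^{-1}$ inversion cost against the approximation error, and using the spectral gap $1-\|\rho\|\gtrsim (\log T)^{1/6}\varepsilon_T^{1/3}$ obtained in the first step, closes the resulting self-bounding inequality to $\|f-f_0\|_1\leq w_T\varepsilon_T$ for any $w_T\to\infty$. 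The specific exponents $1/6$ and $1/3$ in Assumption~(ii) arise precisely as the optimal trade-off in this inversion.

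The hard part is the deterministic inverse of the previous paragraph: $\int_0^T|D_t^k|\,dt$ a priori controls only \emph{signed} aggregates like $a_k+\sum_\ell\mu_\ell^0\int g_{\ell,k}$, whereas $\|f-f_0\|_1$ is \emph{unsigned} in the $g_{\ell,k}$. Extracting the unsigned norms $\|g_{\ell,k}\|_1$ requires leveraging the rich temporal variability of the observed point measures $dN^\ell$ on $\Omega_T$ (which is precisely why the deterministic structure of $\Omega_T$ is essential) and paying an operator-inversion cost of order $(1-\|\rho\|)^{-1}$, which is exactly what Assumption~(ii) is calibrated to afford.
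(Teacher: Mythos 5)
Your first step — using the evidence lower bound $D_T\geq e^{-c_1T\varepsilon_T^2}$ together with \eqref{uT} to conclude that the posterior puts negligible mass on $\{\|\rho\|>1-u_0(\log T)^{1/6}\varepsilon_T^{1/3}\}$ — is correct and is also the first move in the paper. The gap is in your claim that "it remains to prove the deterministic inclusion $\mathcal{E}_T\subset\{f:\|f-f_0\|_1\leq w_T\varepsilon_T\}$ on the event $\Omega_T$." No such deterministic inequality holds, and the paper makes no attempt to prove one. The event $\Omega_T$ constrains only a window count ($\sup_t N^\ell[t-A,t)\lesssim\log T$) and the total count ($|N^\ell[-A,T]/T-\mu_\ell^0|\leq\delta_T$); it does not pin down the fine temporal placement of the points. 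For a fixed realization, translates $g_{\ell,k}(t-T_i)$ of the perturbation can cancel each other inside $\sum_{T_i}g_{\ell,k}(t-T_i)$, so that $\int_0^T|\lambda_t^k(f)-\lambda_t^k(f_0)|\,dt$ is small while $\|g_{\ell,k}\|_1$ is large. The ``Fourier/Neumann inversion'' you invoke would have to invert the realization-dependent convolution $g\mapsto\sum_\ell g\ast dN^\ell$, whose (pseudo-)inverse has a norm governed by the realized configuration, not by $(1-\|\rho\|)^{-1}$; the latter controls the linear map $(I-\rho^T)^{-1}$ on $\mathbb R^K$ acting on the vector of $\L_1$-masses, which is a different object. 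You cannot derive a pointwise inverse inequality from it.

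What the paper actually does at this step is probabilistic, not deterministic: it performs the change of measure $e^{L_T(f)-L_T(f_0)}$ afforded by the $D_T$ bound to pass from $\E_0$ to $\E_f$, and then shows that \emph{for every fixed $f$ with $\|f-f_0\|_1>w_T\varepsilon_T$ and $\|\rho\|\leq 1-u_T$}, one has $\P_f\bigl(\Omega_{1,T}\cap\{d_{1,T}(f,f_0)\leq\varepsilon_T\}\mid\mathcal G_{0^-}\bigr)=o(e^{-c_1T\varepsilon_T^2})$, which is exactly what absorbs the $e^{c_1T\varepsilon_T^2}$ factor from the change of measure. The three ingredients are: (a) a lower bound $\E_f[Z_{1,\ell}]\gtrsim T\|f-f_0\|_1/J_T$ for some coordinate $\ell$ (Lemma~\ref{lem:EZm}), itself proved by a change of measure to an auxiliary Poisson reference measure $\mathbb Q_f$ and restriction to a favourable single-ancestor configuration, where the $\L_1$ norm of $g_{k,\ell}$ appears without cancellation; (b) a decoupling of the block statistics $Z_{m,\ell}$ into i.i.d.\ versions $\tilde Z_{m,\ell}$ built from the cluster representation (Lemma~\ref{lem:barNm}); and (c) Bernstein's inequality applied to the $\tilde Z_{m,\ell}$. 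The exponents $(\log T)^{1/6}\varepsilon_T^{1/3}$ in \eqref{uT} are calibrated not against a deterministic inversion cost but against the variance bound $\E_f[\tilde Z_{1,\ell}^2]\lesssim(1-\|\rho\|)^{-4}(T/J_T)^2\|f-f_0\|_1^2$ (via the Laplace transform of the cluster size, Lemma~\ref{lem:Laplacetransform}) and the choice $J_T\asymp(\log T)^{-1}Tu_T^2$. Your write-up correctly identifies where the spectral gap enters but misattributes the mechanism; the concentration-under-$\P_f$ argument is irreplaceable here and cannot be short-circuited by a pathwise inequality on $\Omega_T$.
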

\begin{remark}
Condition (i) of Theorem \ref{th:L1} is in particular verified under the assumptions of Theorem~\ref{th:d1}, with $\varepsilon_T = M\epsilon_T \sqrt{\log \log T} $ for $M$ a constant.
\end{remark}
\begin{remark}
Compared to Theorem \ref{th:d1}, we also assume (ii), i.e. that the prior distribution puts very little mass near the boundary of space $\{f;\|\rho\|<1\}$. In particular, if under $\Pi$, $\|\rho\|$ has its support included in $[0,1-\epsilon]$ for a fixed small $\epsilon>0$ then \eqref{uT} is verified.
\end{remark}
A consequence of previous theorems is that the posterior mean $\hat f = \E^\pi[ f| N]$ is converging to $f_0$ at the rate $\varepsilon_T$, which is described by the following corollary. 
\begin{cor} \label{coro:L1hatf}
Under the assumptions of Theorem~\ref{th:d1} or Theorem \ref{th:slices}, together with \eqref{uT} with $\varepsilon_T = \sqrt{\log \log T} \epsilon_T$ and if 
 $ \int_{\mathcal F} \| f\|_1 d\Pi(f) < +\infty$, then for any $w_T \rightarrow +\infty$
$$\P_0 \left( \|\hat f - f_0 \|_1 > w_T\varepsilon_T \right) = o(1).$$
\end{cor}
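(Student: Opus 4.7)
The starting point is the convexity bound
\[
\|\hat f - f_0\|_1 = \|\E^\pi[f \mid N] - f_0\|_1 \leq \E^\pi[\|f - f_0\|_1 \mid N],
\]
which follows from Jensen's inequality applied coordinatewise to each $\nu_k$ and each $h_{\ell,k}$. Hence it suffices to show that $\E^\pi[\|f - f_0\|_1 \mid N] = o_{\P_0}(w_T \varepsilon_T)$ for every $w_T \to \infty$.

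Fix such a $w_T$ and choose $u_T \to \infty$ with $u_T = o(w_T)$ (e.g.\ $u_T = \sqrt{w_T}$). The natural decomposition is
\[
\E^\pi[\|f - f_0\|_1 \mid N] \leq u_T \varepsilon_T + \E^\pi\bigl[\|f - f_0\|_1 \mathbf{1}_{\{\|f - f_0\|_1 > u_T \varepsilon_T\}} \mid N\bigr],
\]
and the first term is already $o(w_T \varepsilon_T)$. All the work lies in bounding the tail.

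To handle the tail I would exploit that $\|\rho\| < 1$ forces $\|h_{\ell,k}\|_1 \leq \|\rho\| < 1$, so $\|f - f_0\|_1$ can only be large via the spontaneous rates $\sum_k \nu_k$. Split the tail at a level $L_T \to \infty$: on $\{u_T\varepsilon_T < \|f - f_0\|_1 \leq L_T\}$ the integrand is bounded by $L_T + \|f_0\|_1$, so the contribution is at most $(L_T+\|f_0\|_1)\,\Pi(\|f-f_0\|_1 > u_T\varepsilon_T \mid N)$. By Theorem~\ref{th:L1} applied with $u_T$ in place of $w_T$ the posterior probability is $o_{\P_0}(1)$, and taking $L_T$ to grow slowly enough renders this piece $o_{\P_0}(w_T \varepsilon_T)$.

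The far tail $\{\|f - f_0\|_1 > L_T\}$ is the principal obstacle. Writing the posterior as $d\Pi(f \mid N) = D_T^{-1} e^{L_T(f) - L_T(f_0)} d\Pi(f)$ and using the lower bound $D_T \geq e^{-c_1 T \varepsilon_T^2}$ (holding with $\P_0$-probability $1-o(1)$ by hypothesis~(i) of Theorem~\ref{th:L1}), then applying Fubini together with the likelihood-ratio identity $\E_0[e^{L_T(f) - L_T(f_0)}] = 1$ (valid since $\|\rho\| < 1$ makes both Hawkes laws proper probability measures), one reaches an upper bound of the form
\[
e^{c_1 T \varepsilon_T^2} \int_{\|f - f_0\|_1 > L_T} \|f - f_0\|_1 \, d\Pi(f).
\]
The integrability assumption $\int \|f\|_1 d\Pi(f) < +\infty$ ensures the integral vanishes as $L_T \to \infty$. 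The delicate point is balancing this vanishing against the exponential factor $e^{c_1 T \varepsilon_T^2}$: bare integrability is not enough by itself, so one must exploit the stronger exponential-type posterior concentration implicit in the proofs of Theorems~\ref{th:d1} and~\ref{th:L1} (the Ghosal--van der Vaart tests constructed there give a bound $\lesssim e^{-cT\varepsilon_T^2}$ on the complementary posterior mass for some $c>c_1$). This allows $L_T$ to be chosen as a power of $T$ and collapses the total tail contribution to $o_{\P_0}(w_T\varepsilon_T)$, completing the proof.
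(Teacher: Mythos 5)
Your starting convexity bound and first split at $u_T\varepsilon_T$ match the paper's, but from there your route diverges and runs into a genuine gap in the far-tail argument.

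The paper does not truncate $\|f-f_0\|_1$ at a deterministic level $L_T$. Instead it decomposes the tail expectation according to the events $A_{\varepsilon_T} = \{d_{1,T}(f_0,f)\leq\varepsilon_T\}$ and $A_{\varepsilon_T}^c$, and bounds $\P_0\bigl(\E^\pi[\|f-f_0\|_1\1_{A_{L_1}(w_T\varepsilon_T)^c}\1_{A_{\varepsilon_T}}|N] > w_T\varepsilon_T\bigr)$ directly by Fubini and Markov. The crucial ingredient, inherited from the proof of Theorem~\ref{th:L1}, is that for \emph{every} $f$ in the bad set $A_{L_1}(w_T\varepsilon_T)^c\cap\mathcal F_T$ one has $\E_0[\P_f(\Omega_{1,T}\cap\{d_{1,T}(f_0,f)\leq\varepsilon_T\}|\mathcal G_{0^-})]\lesssim e^{-cT\epsilon_T^2}$ with $c$ as large as desired (by taking $u_0$ large in condition~\eqref{uT}), \emph{independently of how large $\|f-f_0\|_1$ is}. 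The integral $\int\|f-f_0\|_1\,d\Pi(f)$ is then just a finite constant multiplying an exponentially small factor that beats $e^{c_1T\epsilon_T^2}/(w_T\varepsilon_T)$. No rate of decay on the prior tail of $\|f-f_0\|_1$ is needed.

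Your approach instead demands both of the following, and they are incompatible. For the middle band $\{u_T\varepsilon_T < \|f-f_0\|_1 \leq L_T\}$ you need $L_T\cdot\Pi(\|f-f_0\|_1>u_T\varepsilon_T|N) = o_{\P_0}(w_T\varepsilon_T)$; but Theorem~\ref{th:L1} only yields $\E_0[\Pi(\cdot|N)]=o(1)$, and the $o(1)$ is in fact polynomial in $T$ (the denominator control in Lemma~\ref{lemma:Kullback} gives a bound of order $\log\log T\cdot\log^3T/(T\epsilon_T^2)$, and $\P_0(\Omega_T^c)$ is only $T^{-\alpha}$), so $L_T$ is forced to grow at most polynomially. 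For the far tail $\{\|f-f_0\|_1>L_T\}$ your bound reduces to $e^{c_1T\varepsilon_T^2}\int_{\|f-f_0\|_1>L_T}\|f-f_0\|_1\,d\Pi(f)$; bare integrability gives no rate for this truncated integral, so to beat the exponential factor you would need $L_T$ to grow like $e^{c_1T\varepsilon_T^2}$ (or impose an unjustified tail decay of $\Pi$). Your attempted rescue — that "the Ghosal--van der Vaart tests give a bound $\lesssim e^{-cT\varepsilon_T^2}$ on the complementary posterior mass for some $c>c_1$" — is not available: the tests control the expected power/size, but the assembled posterior bound also contains the polynomially small pieces noted above, so the posterior mass itself is not $e^{-cT\varepsilon_T^2}$ with high $\P_0$-probability. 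The fix is to drop the $L_T$ truncation entirely and keep the weight $\|f-f_0\|_1$ inside the Fubini integral, using the uniform-in-$f$ exponential bound on $\P_f(\Omega_{1,T}\cap\{d_{1,T}\leq\varepsilon_T\}|\mathcal G_{0^-})$ to absorb $e^{c_1T\varepsilon_T^2}$, as the paper does.
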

The proof of Corollary \ref{coro:L1hatf} is given in Section \ref{pr:corL1}. We now illustrate these general results on specific prior models.
%%%%%%%%%%%%%%%%%%%%%%%%%%%%%
\subsection{Examples of prior models}\label{sec:prior-model}
The advantage of Theorems \ref{th:d1} and \ref{th:L1} is that the conditions required on the priors on the functions $h_{k,\ell}$ are quite standard, in particular  if the functions $h_{k,\ell}$ are parameterized in the following way
$$h_{k,\ell} = \rho_{k,\ell} \bar h_{k,\ell}, \quad \int_0^A \bar h_{k,\ell} (u) du = 1.$$
We thus consider priors on $\theta = (\nu_\ell, \rho_{k,\ell}, \bar h_{k,\ell}, k,\ell \leq K)$ following the scheme 
\begin{equation}
\begin{split}\label{schemeprior}
\nu_\ell \stackrel{iid}{\sim} \Pi_\nu , \quad \rho = (\rho_{k,\ell})_{k,\ell\leq K} \sim \Pi_\rho, \quad \bar h_{k,\ell} \stackrel{iid}{\sim} \Pi_h.
\end{split}
\end{equation}
We consider $\Pi_\nu$ absolutely continuous with respect to the Lebesgue measure on $\mathbb R_+$ with positive and continuous density $\pi_\nu$, 
$\Pi_\rho$ a probability distribution on the set of matrices with positive entries and spectral norm $\|\rho\|<1$, with positive density with respect to Lebesgue measures and satisfying \eqref{uT}. We now concentrate on  the nonparametric part, namely the prior distribution $\Pi_h$.
Then, from Theorems \ref{th:d1} and \ref{th:L1} it is enough that $\Pi_h$ satisfies for each $k,\ell\leq K$,
\begin{equation*}
\Pi_h \left( \|\bar h - \bar h_{k,\ell}^0\|_2 \leq \epsilon_T, \quad \|\bar h\|_\infty \leq B \right) \geq e^{-cT\epsilon_T^2 },  
\end{equation*}
for some $B>0$ and $c>0$ such that there exists $\mathcal F_{1,T}$ with $$\mathcal F_{1,T} \subset \left\{ h : [0,A]\rightarrow \mathbb R^+, \int_0^Ah(x)dx = 1\right\}$$ satisfying 
\begin{equation}\label{cond-prior}\Pi_h\left( \mathcal F_{1,T}^c \right) \leq e^{- CT\epsilon_T^2\log\log T}, \quad N( \zeta \epsilon_T; \mathcal F_{1,T};  \| . \|_1) \leq x_0 T \epsilon_T^2,
\end{equation}
 for $\zeta >0$, $x_0>0$ and  $C>0$ large enough. Note that from remark \ref{rem:supsorm}, if we have that for all $\ell, k $ 
$$ \Pi_h \left( \|\bar h - \bar h_{k,\ell}^0\|_\infty \leq \epsilon_T, \quad \|\bar h\|_\infty \leq B \right) \geq e^{-cT\epsilon_T^2 }$$
then it is enough to verify
\begin{equation}\label{cond-prior2}\Pi_h\left( \mathcal F_{1,T}^c \right) \leq e^{- CT\epsilon_T^2}, \quad N( \zeta \epsilon_T; \mathcal F_{1,T};  \| . \|_1) \leq x_0 T \epsilon_T^2,
\end{equation}
in place of  \eqref{cond-prior}. 
 
These conditions have been checked for a large selection of types of priors on the set of densities. We discuss here two cases: one based on random histograms,  these priors make sense in particular in the context of modeling neuronal interactions and the second based on mixtures of Betas, because it leads to adaptive posterior concentration rates over a large collection of functional classes. To simplify the presentation we assume that $A=1$ but generalization to any $A>0$ is straightforward.

\subsubsection{ Random histogram prior}\label{sec:sec:randomhistogram}
These priors are motivated by the neuronal application, where one is interested in characterizing time zones when neurons are or are not interacting (see Section~\ref{sec:numerical Hawkes}). 
Random histograms have been studied quite a lot recently for density estimation, both in semi and non parametric problems.
We consider two types of random histograms: regular partitions and random partitions histograms. 
Random  histogram priors are defined  by: for $J\geq 1$, 
 \begin{equation}\label{rand:hist} %{randpart:hist}
 \begin{split}
 \bar h_{w,t,J} = \delta \sum_{j=1}^J \frac{ w_j}{ t_j - t_{j-1}} \1_{I_j}, \quad I_j = (t_{j-1},t_j), \quad \sum_{j=1}^J w_j = 1, \quad \delta \sim  \mathcal{B}ern(p)
 \end{split}
 \end{equation}
and 
$$T_0=0 < t_1< \cdots < t_J=1.$$
In both cases, the prior is constructed in the following hierarchical manner: 
\begin{equation}\label{condhistreg}
\begin{split}
J & \sim \Pi_J , \quad e^{-c_1 x L_1(x)} \lesssim  \Pi_J( J = x) , \quad \Pi_J(J>x) \lesssim e^{-c_2 x L_1(x)}, \\ &L_1(x) = 1 \mbox{ or } L_1(x) =\log x\\
(w_1, \dots, w_J)| J &  \sim \Pi_w, %\mathcal D(\alpha_{1,J}, \cdots, \alpha_{J,J}), \quad c_1J^{-a}\leq \alpha_{i,J}\leq c_2, 
\end{split}
\end{equation}
where $c_1$ and $c_2$ are two positive constants. Denoting $\mathcal S_J$ the $J$-dimensional simplex,  we assume that the prior on $(w_1, \cdots , w_J)$  satisfies : for all  $M>0$, for all $w_0 \in \mathcal S_J$ with for any $j$, $w_{0j} \leq M/J$ and all $u>0$ small enough, there exists $c>0$ such that 
\begin{equation}\label{cond:w}
\Pi_w\left((w_{01}-u/J^2, w_{01} + u/J^2)\times\cdots\times(w_{0J}-u/J^2, w_{0J} + u/J^2)\right) > e^{- c J \log J}.
\end{equation}
 Many probability distributions on $\mathcal S_J$ satisfy \eqref{cond:w}. For instance, if $ \Pi_w$ is the Dirichlet distribution $ \mathcal D(\alpha_{1,J}, \cdots, \alpha_{J,J}) $ with $ c_3J^{-a}\leq \alpha_{i,J}\leq c_4$, for $a$, $c_3$ and $c_4$ three positive constants,  then \eqref{cond:w} holds, see for instance  \citet{castillo:rousseau:2015}. Also, consider the following hierarchical prior allowing some the of $w_j$'s to be equal to 0. 
Set $$ Z_j \stackrel{iid}{\sim} \text{Be}(p), \quad j \leq J, \quad s_z = \sum_{j=1}^J Z_j$$
and $(j_1, \cdots , j_{s_z})$ the indices corresponding to $Z_j =1$. Then, 
 \begin{equation*}
 \begin{split}
(w_{j_1}, \cdots w_{j_{s_z}}) &\sim \mathcal D(\alpha_{1,J},\cdots, \alpha_{s_z,J} ), \quad  c_3J^{-a}\leq \alpha_{i,J}\leq c_4\\
 w_j  &= 0 \quad \text{if } Z_j = 0.
\end{split}
\end{equation*}
Regular partition histograms correspond to $t_j = j/J$ for $j \leq J$, in which case we write  $\bar h_{w,J}$ instead of $ \bar h_{w,t,J}$;  while in random partition histograms we put a prior on $(t_1, \cdots, t_J)$. 
% \begin{equation}\label{reg:hist}
% \begin{split}
%\bar h_{w,J} (x) = \delta \sum_{j=1}^J J w_j\1_{I_j}(x), \quad I_j = ((j-1)/J, j/J), \quad \sum_{j=1}^J w_j = 1, \quad \delta \sim \text{Be}(p).
% \end{split}
% \end{equation}
We now consider H\"older balls of smoothness $\beta$ and radius $L_0$, denoted $\mathcal H(\beta, L_0)$, and prove that the posterior concentration rate associated with both types of histogram  priors is  bounded by $\epsilon_T=\epsilon_0(\log T/T)^{\beta/(2\beta+1)}$ for $0<\beta\leq 1$, where $\epsilon_0$ is a constant large enough.
From Remark \ref{rem:supsorm}, we use the version of assumption (i) based on $$B_\infty (\epsilon_T, B) = \left\{(\nu_k,(h_{\ell,k})_{\ell})_k:\quad \max_k|\nu_k-\nu^0_k|\leq\e_T, \, \max_{\ell,k}\|h_{\ell,k}-h^0_{\ell,k}\|_\infty \leq \e_T \right\},$$
and need to verify \eqref{cond-prior2}.
Then applying Lemma 4 of the supplementary material of \citet{castillo:rousseau:2015}, we obtain for all  $\bar h_0 \in \mathcal H(\beta, L_0)$ and  if $\bar h_0$ is not the null function
 $$ \Pi\left( \| \bar h_{w,J}  - \bar h_0\|_\infty \leq 2L_0 J^{-\beta} | J\right) \gtrsim pe^{ - c J \log T} $$
 for some $c>0$ and $\Pi_J(J = J_0 \lfloor (T/\log T)\rfloor^{1/(2\beta+1)} )\gtrsim e^{ -  c_1 J_0(T/\log T)^{1/(2\beta+1)}L_1(T) }$ if $J_0$ is a constant.
  If $\bar h_0= 0$ 
 then 
 $$ \Pi\left( \| \bar h_{w,J}  - \bar h_0\|_\infty =0 \right) = 1-p.$$ 
 This thus  implies that $ \Pi\left(B_\infty (\epsilon_T, B)  \right) \gtrsim pe^{ - c' T\epsilon_T^2} $ for some $c'>0$. This result holds both for the regular grid and random grid histograms with a prior on the grid points  $(t_1, \cdots, t_J)$ given by $(u_1, \cdots, u_J) \sim \mathcal D(\alpha, \cdots, \alpha)$ with $u_j = t_j - t_{j-1}$. 
Then condition \eqref{uT} is verified if $\Pi(\|\rho\| >1 - u)\lesssim  e^{-a' u^{-a}}$ with $a>3/\beta$ and $a'>0$, for $u$ small enough. This condition holds for any $\beta \in (0,1]$ if there exist $a', \tau >0$ such that when $u$ is small enough
\begin{equation}\label{uTadap}
\Pi\left( \|\rho\| > 1 -u\right) \lesssim e^{-a' e^{-1/u^\tau}}.
\end{equation}
 Moreover, set $\mathcal F_{1,T} = \{  \bar h_{w,J}, J \leq J_1(T/\log T)^{1/(2\beta+1)}\}$ for $J_1$ a constant, then for all $\zeta >0$,
$$N(\zeta \epsilon_T, \mathcal F_{1,T}, \| . \|_1) \lesssim J_1(T/\log T)^{1/(2\beta+1)} \log T.$$
Therefore, \eqref{cond-prior2} is checked. We finally obtain the following corollary. 
\begin{cor}[regular partition] \label{coro:reghist}
Under the random histogram prior  \eqref{rand:hist} based on a regular partition and verifying \eqref{condhistreg} and \eqref{cond:w} and if \eqref{uTadap} is satisfied, % with $\varepsilon_T=\epsilon_0 (T/\log T)^{-\beta/(2\beta+1)}$
%for a
%$$\Pi_\rho\left( \| \rho\|> 1 - u \right)\leq e^{-c_0 T \sqrt{\log T} u^{1/3}} $$ for all $u$ small enough, 
 then if for any $k,\ell=1,\ldots,K$, $h_{k,\ell }^0$ belongs to $\mathcal H(\beta,L)$ for $0<\beta \leq 1$, then for any $w_T\to +\infty$,
 $$ \E_0\left[\Pi \left( \| f - f_0\|_1 > w_T(T/\log T)^{-\beta/(2\beta+1)} | N\right)\right] = o(1) .$$
\end{cor}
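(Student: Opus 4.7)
The plan is to deduce the corollary from Theorems \ref{th:d1} and \ref{th:L1} with $\varepsilon_T=\epsilon_T=\epsilon_0(\log T/T)^{\beta/(2\beta+1)}$, using the $L^\infty$ variant of assumption~(i) provided by Remark \ref{rem:supsorm}. Nearly all the ingredients are already assembled in the discussion preceding the statement; my task is essentially to organize them and to check that the bookkeeping works out.

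First I would fix the resolution $J^\ast=J_0\lfloor(T/\log T)^{1/(2\beta+1)}\rfloor$, chosen so that $(J^\ast)^{-\beta}\asymp\epsilon_T$ and $J^\ast\log T\asymp T\epsilon_T^2$. Conditioning on $J=J^\ast$ and $\delta=1$, the $L^\infty$-approximation bound for regular-partition histograms against $\mathcal H(\beta,L)$ targets (Lemma~4 in the supplement of \citet{castillo:rousseau:2015}), combined with the small-ball hypothesis \eqref{cond:w} on the weights, yields $\Pi_h(\|\bar h_{w,J^\ast}-\bar h_{k,\ell}^0\|_\infty\le 2L(J^\ast)^{-\beta})\gtrsim e^{-cJ^\ast\log T}$; the degenerate case $\bar h_{k,\ell}^0\equiv 0$ is absorbed by the $1-p$ mass on $\delta=0$. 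Multiplying over the $K^2$ interaction coordinates, incorporating the lower bound $\Pi_J(J=J^\ast)\gtrsim e^{-c_1 J^\ast L_1(T)}$ from \eqref{condhistreg}, and using positivity and continuity of $\pi_\nu$ at $(\nu_k^0)_k$, delivers assumption~(i) in the form $\Pi(B_\infty(\epsilon_T,B))\gtrsim e^{-c'T\epsilon_T^2}$, which is exactly the bound asserted just above the corollary.

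Next, taking $\mathcal H_T=\{\bar h_{w,J}:\,J\le J_1(T/\log T)^{1/(2\beta+1)}\}^{K^2}$ for $J_1$ to be chosen, the upper tail of $\Pi_J$ yields $\Pi_h(\mathcal H_T^c)\lesssim e^{-c_2 J_1(T/\log T)^{1/(2\beta+1)}L_1(T)}$, which can be made smaller than $e^{-(2\kappa_T+3)T\epsilon_T^2}\,\Pi(B_\infty(\epsilon_T,B))$ by choosing $J_1$ large enough, verifying assumption~(ii); the slow-growing factor $\kappa_T\asymp\log\log T$ is absorbed because $L_1(T)\in\{1,\log T\}$. For assumption~(iii), a standard entropy estimate for histograms on the simplex (together with entropy on the grid points in the random-partition case) gives $\log\mathcal N(\zeta_0\epsilon_T,\mathcal H_T,\|\cdot\|_1)\lesssim J_1(T/\log T)^{1/(2\beta+1)}\log T\asymp T\epsilon_T^2$, matching the display already written in the preamble. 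Theorem~\ref{th:d1} combined with Remark~\ref{rem:supsorm} then produces $\E_0[\Pi(d_{1,T}(f,f_0)>M\epsilon_T\mid N)]=o(1)$, without the $\sqrt{\log\log T}$ factor.

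To upgrade to the $L^1$-statement I would invoke Theorem~\ref{th:L1} with $\varepsilon_T=\epsilon_T$. Its condition~(i) is supplied by the previous display together with the Kullback-Leibler evidence lower bound $\P_0(D_T<e^{-c_1T\varepsilon_T^2})=o(1)$, which is itself a byproduct of the verification of hypothesis~(i) of Theorem~\ref{th:d1}. For condition~(ii), the threshold $u_T=u_0(\log T)^{1/6}\varepsilon_T^{1/3}\asymp T^{-\beta/(3(2\beta+1))}(\log T)^{1/6+\beta/(3(2\beta+1))}$ tends to zero, and the doubly-exponential tail~\eqref{uTadap} produces a bound on $\Pi(\|\rho\|>1-u_T)$ that decays faster than any $e^{-aT\varepsilon_T^2}$, hence dominates $e^{-2c_1T\varepsilon_T^2}$. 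The main obstacle I anticipate lies exactly in this spectral-radius step: because $\varepsilon_T$ enters condition~(ii) of Theorem~\ref{th:L1} only through the slow $1/3$-power, one really needs a super-polynomial prior tail of the form~\eqref{uTadap} to cover all $\beta\in(0,1]$ simultaneously, and a careful comparison between $T\varepsilon_T^2$ and the double-exponential rate in~\eqref{uTadap} is required. Once this is established, Theorem~\ref{th:L1} yields the claimed $L^1$-rate and the corollary follows.
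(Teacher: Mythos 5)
Your proposal is correct and follows essentially the same route as the paper, which treats this corollary in the discussion immediately preceding its statement: verify the $B_\infty(\epsilon_T,B)$ small-ball bound via Lemma~4 of the supplement of \citet{castillo:rousseau:2015} together with the $\Pi_J$ lower bound, take $\mathcal F_{1,T}=\{\bar h_{w,J}: J\le J_1(T/\log T)^{1/(2\beta+1)}\}$ to check \eqref{cond-prior2}, apply Theorem~\ref{th:d1} with the $L^\infty$ variant of Remark~\ref{rem:supsorm} so that $r_T=1/2$ and the $\sqrt{\log\log T}$ factor disappears, and pass to $\L_1$ through Theorem~\ref{th:L1} using the doubly-exponential tail \eqref{uTadap} to verify \eqref{uT} uniformly over $\beta\in(0,1]$. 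Your bookkeeping, including the identification that a fixed-$\beta$ polynomial tail $e^{-a'u^{-a}}$ with $a>3/\beta$ already suffices, and that the doubly-exponential form is needed only for adaptivity across $\beta$, matches the paper exactly; the only superfluous remark is the aside about grid-point entropy, which belongs to Corollary~\ref{cor:randparthist} rather than this regular-partition case.
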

To extend this result to the case of random partition histogram priors we  consider the same prior on $(J, w_1, \cdots , w_J)$  as in \eqref{condhistreg} and the following condition on the prior on $\underline t =(t_1, \cdots, t_K)$. Writing $u_1 = t_1$, $u_j  = t_j - t_{j-1}$, we have that $\underline u = (u_1, \cdots, u_J)  $ belongs to the $J$-dimensional simplex $\mathcal S_J$ and we consider a Dirichlet distribution on $(u_1, \cdots , u_J)$, $\mathcal D(\alpha, \cdots, \alpha)$ with $\alpha \geq 6$. 
\begin{cor}\label{cor:randparthist}
Consider the random histogram prior  \eqref{rand:hist} based on random partition with a prior on $\underline{w}$ satisfying \eqref{condhistreg} and \eqref{cond:w} and with a Dirichlet prior on $\underline u = (t_{j}- t_{j-1}, j \leq J)$, with parameter $\alpha \geq 6$. If \eqref{uTadap} is satisfied, % with $\varepsilon_T=\epsilon_0 (T/\log T)^{-\beta/(2\beta+1)}$
%for a
%$$\Pi_\rho\left( \| \rho\|> 1 - u \right)\leq e^{-c_0 T \sqrt{\log T} u^{1/3}} $$ for all $u$ small enough, 
 then if for any $k,\ell=1,\ldots,K$, $h_{k,\ell }^0$ belongs to $\mathcal H(\beta,L)$ for $0<\beta \leq 1$, then for any $w_T\to +\infty$,
 $$ \E_0\left[\Pi \left( \| f - f_0\|_1 > w_T(T/\log T)^{-\beta/(2\beta+1)} | N\right)\right] = o(1) .$$
\end{cor}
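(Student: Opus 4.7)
I follow the template of Corollary~\ref{coro:reghist}: apply Theorem~\ref{th:L1} through Theorem~\ref{th:d1} together with the sup-norm variant of assumption (i) from Remark~\ref{rem:supsorm}, so that it suffices to verify the sup-norm small-ball, the sieve condition \eqref{cond-prior2}, and the spectral condition \eqref{uT}. Since \eqref{uT} constrains only $\Pi_\rho$, its verification is identical to the regular-grid case under \eqref{uTadap}. The novelty therefore lies in handling the random partition $\underline{u} \sim \mathcal{D}(\alpha,\ldots,\alpha)$ in the small-ball and entropy arguments.

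\textbf{Small-ball bound.} Fix $J_T = J_0 (T/\log T)^{1/(2\beta+1)}$ and a target $\bar h^0_{k,\ell} \in \mathcal H(\beta, L)$. Conditional on $J = J_T$, I restrict to the event
$$\mathcal T_J = \{ u_j \in [1/(2J),\, 3/(2J)],\ j=1,\ldots,J\}$$
that the partition is balanced. A direct estimate of the Dirichlet density at the uniform point $(1/J,\ldots,1/J)$, using Stirling and the condition $\alpha \geq 6$, yields $\Pi(\mathcal T_J \mid J = J_T) \geq e^{-c_3 J_T \log J_T}$. On $\mathcal T_J$, choosing weights $w_j^\ast = \int_{I_j} \bar h^0_{k,\ell}$ produces a histogram whose sup-norm distance to $\bar h^0_{k,\ell}$ is at most $L\, |I_j|^\beta \lesssim J_T^{-\beta} \asymp \epsilon_T$, and \eqref{cond:w} gives probability at least $e^{-c_4 J_T \log J_T}$ that the sampled weights lie in an $O(J_T^{-2})$-cube around $w^\ast$. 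Combining with $\Pi_J(J = J_T) \gtrsim e^{-c_5 J_T L_1(T)}$ coming from \eqref{condhistreg} yields
$$\Pi_h\bigl(\|\bar h - \bar h^0_{k,\ell}\|_\infty \leq \epsilon_T,\ \|\bar h\|_\infty \leq B\bigr) \gtrsim e^{-c\, T\epsilon_T^2},$$
exactly the sup-norm small-ball needed for the adapted form of assumption (i).

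\textbf{Sieve, entropy, and main obstacle.} I take $\mathcal F_{1,T} = \{\bar h_{w,t,J} : J \leq J_1 (T/\log T)^{1/(2\beta+1)},\ \min_j u_j \geq J^{-q}\}$ for a sufficiently large $q$. The tail $\{J > J_1(T/\log T)^{1/(2\beta+1)}\}$ has prior mass $\leq e^{-CT\epsilon_T^2}$ by \eqref{condhistreg}. The event $\{\min_j u_j < J^{-q}\}$ is controlled by a union bound and the Dirichlet marginal $u_j \sim \mathrm{Beta}(\alpha, (J-1)\alpha)$: since $\alpha \geq 6$, its density near $0$ is of order $u^{\alpha-1}$, so the marginal tail is $\lesssim J^{-q(\alpha-1)} \leq J^{-5q}$, summable over $J$ for $q$ large. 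For the entropy in $\|\cdot\|_1$, on $\mathcal F_{1,T}$ the heights $w_j/u_j$ are bounded by $J^{q+1}$, so discretizing the $2J-1$ parameters $(\underline u, \underline w)$ at scale $\epsilon_T J^{-q-2}$ yields an $\epsilon_T$-net with log-cardinality $\lesssim J_T \log T \lesssim T\epsilon_T^2$, verifying \eqref{cond-prior2}. Theorem~\ref{th:L1} then gives the announced rate. The delicate point, and main obstacle, is the joint calibration forced by the random partition: balanced partitions are needed for the sup-norm H\"older approximation while a minimum spacing $J^{-q}$ is needed for the entropy; the Dirichlet prior must simultaneously concentrate on nearly uniform partitions (for the small-ball) and decay quickly enough at small $u_j$ (for the sieve complement), and $\alpha \geq 6$ provides the numerical margin making both requirements compatible with the same rate $\epsilon_T$.
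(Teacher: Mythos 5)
Your small-ball analysis (restrict to balanced partitions, use the Dirichlet density at the uniform point plus \eqref{cond:w}) is plausible and in the spirit of the paper, and the verification of \eqref{uT} is indeed unchanged. However, there is a genuine gap in the sieve/entropy step, which is precisely the part the paper flags as ``more complicated than the regular grid histogram prior'' and resolves by invoking Theorem~\ref{th:slices} rather than \eqref{cond-prior2}.

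The problem is your choice $\mathcal F_{1,T} = \{\min_j u_j \geq J^{-q}\}$. Under the Dirichlet prior $\mathcal D(\alpha,\ldots,\alpha)$ with a \emph{fixed} $\alpha$, the marginal $u_j \sim \mathrm{Beta}(\alpha,(J-1)\alpha)$ has only a polynomial tail at $0$: $\P(u_j < J^{-q}) \asymp (J\alpha)^\alpha J^{-q\alpha}/\alpha$. After the union bound and averaging over $J$, the sieve complement has prior mass that is at best polynomially small in $J_{1,T} \asymp (T/\log T)^{1/(2\beta+1)}$ (and in fact bounded below by a positive constant, coming from small $J$, e.g.\ $J=2$). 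This is nowhere near the requirement $\Pi_h(\mathcal F_{1,T}^c) \leq e^{-C T\epsilon_T^2}$ of \eqref{cond-prior2}, which demands decay of order $e^{-CT^{1/(2\beta+1)}(\log T)^{2\beta/(2\beta+1)}}$. Your observation that the bound is ``summable over $J$'' is not the right criterion: summability gives a constant, not a vanishing quantity. Moreover, you cannot repair this by letting $q=q_T\to\infty$: making the complement exponentially small forces $q_T \gtrsim T\epsilon_T^2/\log T \to\infty$, while keeping the log-entropy $\lesssim J_{1,T}\, q_T \log J_{1,T}$ below $T\epsilon_T^2$ forces $q_T \to 0$. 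The two constraints are incompatible.

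The paper instead takes $\mathcal U_{J,T} = \{\min_j u_j \geq e_T\}$ with $\log e_T \asymp -T\epsilon_T^2$, so the complement is genuinely exponentially small; but then the heights $w_j/u_j$ can be as large as $e_T^{-1} = e^{cT\epsilon_T^2}$, and the $\L_1$-entropy of $\mathcal F_{1,T}$ itself is too large for \eqref{cond-prior2}. This is precisely why one needs the slicing criterion \eqref{cond:slices} of Theorem~\ref{th:slices}: the paper partitions the configurations of widths into dyadic scale bands $(e_{T,\ell}, e_{T,\ell+1})$ with $e_{T,\ell}=e_T^{1/\ell}$, and trades the large entropy of each slice $\mathcal H_{T,\sigma}$ against its small prior mass $\sqrt{\Pi(\mathcal H_{T,\sigma})}$, where the bound $\alpha\geq 6$ enters to ensure the crucial inequality $\tfrac{\alpha+2}{2(\ell+1)}-\tfrac{2}{\ell}\geq 0$ makes the weighted series converge. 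This slice-by-slice tradeoff is the essential idea your proposal is missing, and it cannot be replaced by a single sieve with polynomial cutoff under a Dirichlet prior with fixed concentration parameter.
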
 
The proof of this corollary is given in Section~\ref{sec:Proof:bayes}. In the following section, we consider another family of priors suited for smooth functions $h_{k,\ell}$ and based on mixtures of Beta distributions. 
\subsubsection{Mixtures of Betas} \label{sec:mixbeta}
The following family of prior distributions is inspired by \citet{rousseau:09}. Consider functions 
\begin{equation*}
h_{k,\ell} = \rho_{k,\ell}\left( \int_0^1 g_{\alpha_{k,\ell}, \epsilon}  dM_{k,\ell}(\epsilon)\right)_+, \quad g_{\alpha, \epsilon} (x) =\frac{ \Gamma(\alpha /(\epsilon (1-\epsilon)))}{ \Gamma(\alpha/\epsilon)\Gamma(\alpha/(1-\epsilon))} x^{\frac{\alpha}{1-\epsilon}-1}(1-x)^{ \frac{\alpha}{\epsilon}-1}
\end{equation*}
where $M_{k,\ell}$ are bounded signed measures on $[0,1]$ such that $|M_{k,\ell}|=1$. In other words the above functions are the positive parts of mixtures of Betas distributions with parameterization $(\alpha/\epsilon, \alpha/(1-\epsilon))$ so that $\epsilon$ is the mean parameter. The mixing random measures $M_{k,\ell}$ are allowed to be negative. The reason for allowing $M_{k,\ell}$ to be negative is that $h_{k,\ell}$ is then allowed to be null on sets with positive Lebesgue measure. 
The prior is then constructed in the following way. Writing $h_{k,\ell} = \rho_{k,\ell} \tilde h_{k,\ell} $ we define a prior on $\tilde h_{k,\ell}$ via a prior on $M_{k,\ell}$ and on $\alpha_{k,\ell}$.  In particular we assume that $M_{k,\ell} \stackrel{iid}{\sim} \Pi_M$ and $\alpha_{k,\ell} \stackrel{iid}{\sim} \pi_\alpha$. As in \citet{rousseau:09} we consider a prior on $\alpha$ absolutely continuous with respect to Lebesgue measure and with density satisfying: there exists $b_1 , c_1, c_2, c_3, A,  C >0$ such that for all $u$ large enough, 
\begin{equation}\label{cond:alpha}
\begin{split}
\pi_\alpha(c_1 u < \alpha < c_2u ) &\geq Ce^{ -b_1 u^{1/2}}\\
\pi_\alpha(\alpha < e^{-Au}) + \pi_\alpha (\alpha > c_3 u ) &\leq C e^{ -b_1 u^{1/2}}.
\end{split}
\end{equation}
There are many ways to construct discrete signed measures on $[0,1]$, for instance, writing
 \begin{equation}
 M = \sum_{j=1}^J r_j p_j \delta_{\epsilon_j},
 \end{equation}
the prior on $M$ is then defined by 
$ J \sim \Pi_J$ and conditionally on $J$,  
$$r_j\stackrel{iid}{\sim} \text{Ra}(1/2), \quad \epsilon_j \stackrel{iid}{\sim} G_\epsilon, \quad (p_1, \cdots, p_J) \sim \mathcal D(a_1, \cdots, a_J),$$
where Ra denotes the Rademacher distribution taking values $\{-1, 1\}$ each with probability $1/2$. 
Assume that $G_\epsilon$ has  positive continuous density on $[0,1]$ and that there exists $A_0>0$  such that $\sum_{j=1}^J a_j \leq A_0$. We have the following corollary.
%-------------------------------
\begin{cor}\label{prop:mix:beta}
Consider a prior as described above. Assume that for all $k, \ell\leq K$ $h_{k,\ell}^0  = (g_{k,\ell}^0)_+$ for some functions $g_{k,\ell}^0 \in \mathcal H(\beta, L_0)$ with $\beta >0$. If condition \eqref{uTadap} holds
and if $G_\epsilon$ has density with respect to Lebesgue measure verifying  
 $$  x^{ A_1}(1-x)^{A_1} \lesssim g_\epsilon(x) \lesssim x^3(1-x)^3, \quad \text{for some } \quad A_1\geq 3,$$
 then, for any $w_T\to +\infty$,
\begin{equation*}
\E_0 \left[\Pi( \|f - f_0\|_1 >  w_TT^{-\beta/(2\beta+1)} (\log T)^{5\beta/(4\beta+2)} \sqrt{\log \log T } | N)\right]  = o(1).
\end{equation*}
\end{cor}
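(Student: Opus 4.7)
The plan is to apply Theorem~\ref{th:L1} and reduce it to the conditions of Theorem~\ref{th:d1} with the $\L_2$--version of the prior mass set $B(\epsilon_T,B)$ (as noted in Remark~\ref{rem:supsorm}, this requires the stronger complement bound in \eqref{cond-prior}). Choose
\[
\epsilon_T = T^{-\beta/(2\beta+1)} (\log T)^{5\beta/(4\beta+2)},
\]
so that $\varepsilon_T = \sqrt{\log \log T}\,\epsilon_T$ is the target rate. Condition \eqref{uT} of Theorem~\ref{th:L1} with this $\varepsilon_T$ follows immediately from \eqref{uTadap} by plugging $u = u_0(\log T)^{1/6}\varepsilon_T^{1/3}$ and observing that $\exp(1/u^\tau)$ dominates $T\varepsilon_T^2$ for every $\tau>0$, so the tail bound $\exp(-a' e^{1/u^\tau})$ is much smaller than $e^{-2c_1 T\varepsilon_T^2}$. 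Thus the work reduces to verifying assumptions (i)--(iii) of Theorem~\ref{th:d1} for each coordinate $\bar h_{k,\ell}$ (the coordinates $\nu_k$ and $\rho_{k,\ell}$ give harmless multiplicative factors by the positive continuous densities of $\Pi_\nu$ and $\Pi_\rho$).

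For the prior mass condition (i), I would adapt the Beta--mixture approximation scheme of \citet{rousseau:09}. For each $g_{k,\ell}^0 \in \mathcal H(\beta, L_0)$, one constructs a signed discrete measure $M^* = \sum_{j=1}^{J_T} r_j^* p_j^* \delta_{\epsilon_j^*}$ on $[0,1]$ with $J_T \asymp (T/\log T)^{1/(2\beta+1)}$ atoms, together with a precision parameter $\alpha_T \asymp J_T^2$, such that
\[
\Big\| \int g_{\alpha_T,\epsilon}\, dM^*(\epsilon) - g_{k,\ell}^0 \Big\|_\infty \lesssim J_T^{-\beta}(\log T)^{?}.
\]
Since $|(a)_+-(b)_+|\leq|a-b|$ pointwise, the positive-part operator preserves this sup-norm (hence $\L_2$) approximation of $h_{k,\ell}^0=(g_{k,\ell}^0)_+$. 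The prior mass of a neighbourhood of $(\alpha_T, M^*)$ is then lower bounded by combining: the prior on $J$ via $\Pi_J(J=J_T) \gtrsim e^{-c_2 J_T L_1(J_T)}$; the condition \eqref{cond:alpha} yielding $\pi_\alpha(\alpha\approx\alpha_T)\gtrsim e^{-b_1\sqrt{\alpha_T}}$; the Dirichlet bound on $(p_1,\dots,p_{J_T})$ near $(p_1^*,\dots,p_{J_T}^*)$, which costs $e^{-c J_T \log J_T}$ by the tail condition on $G_\epsilon$ near $0,1$ (this is where $A_1 \geq 3$ and the $x^3(1-x)^3$ upper bound enter, to make sure the mass is tight enough in the right regime); the continuity of $G_\epsilon$ giving $e^{-cJ_T\log J_T}$ for the atom locations; and the Rademacher component contributing $2^{-J_T}$. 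The dominant term is $e^{-c J_T \log T} \asymp e^{-c' T \epsilon_T^2}$, giving (i).

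For the sieve (ii) and entropy (iii), define
\[
\mathcal H_T = \Big\{ \bar h = \Big(\int g_{\alpha,\epsilon}\,dM\Big)_+ : J(M)\leq \bar J_T, \ \alpha \leq \bar\alpha_T \Big\},
\]
with $\bar J_T$ slightly larger than $J_T$ and $\bar\alpha_T$ a large polynomial in $T$. The complement probability is bounded by $\Pi_J(J>\bar J_T)+\pi_\alpha(\alpha>\bar\alpha_T)$, which by the tail conditions is at most $e^{-c T\epsilon_T^2 \log\log T}$ for suitable constants, absorbing the $\kappa_T \asymp \log\log T$ factor of assumption (ii). For the entropy, cover $\alpha \in [e^{-A\bar\alpha_T},\bar\alpha_T]$ by a log-spaced grid, then for each $J\leq\bar J_T$ cover the simplex of weights, the signs and the atom locations by grids of mesh $\epsilon_T$; the classical Lipschitz property of $(\alpha,\epsilon)\mapsto g_{\alpha,\epsilon}$ on the chosen grid yields $\log\mathcal N(\zeta_0 \epsilon_T, \mathcal H_T,\|\cdot\|_1) \lesssim \bar J_T \log(\bar\alpha_T \bar J_T/\epsilon_T) \lesssim T\epsilon_T^2$, giving (iii).

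The main obstacle is the prior-mass step: one must construct a signed Beta mixture achieving the full $\mathcal H(\beta,L_0)$ approximation rate for arbitrary $\beta>0$, and then lift this approximation through the $(\cdot)_+$ operator without degrading the rate. The technical heart of the argument is therefore the adaptation of the smoothing and shift arguments from \citet{rousseau:09} to the positive-part setting, together with the careful bookkeeping of the logarithmic factors that produce the precise exponent $5\beta/(4\beta+2)$ in the rate.
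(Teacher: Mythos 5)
Your reduction to Theorem~\ref{th:L1} and the verification of \eqref{uT} from \eqref{uTadap} are fine, and the positive-part observation ($|(a)_+-(b)_+|\leq|a-b|$) and the adaptation of the Rousseau approximation of $g^0_{k,\ell}$ for the prior-mass step are exactly what the paper does. However, the plan breaks down at the entropy step, and this is the technical heart of the result.

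You propose to verify condition (iii) of Theorem~\ref{th:d1} by covering the sieve $\mathcal H_T = \{(\int g_{\alpha,\epsilon}dM)_+: J(M)\leq\bar J_T,\ \alpha\leq\bar\alpha_T\}$ with a grid of mesh $\epsilon_T$ in the atom locations and invoking "the classical Lipschitz property of $(\alpha,\epsilon)\mapsto g_{\alpha,\epsilon}$." That Lipschitz property is not uniform over $\epsilon\in(0,1)$: the $\L_1$-distance between $g_{\alpha,\epsilon}$ and $g_{\alpha,\epsilon'}$ blows up like $\alpha|\epsilon-\epsilon'|/\min(\epsilon,1-\epsilon)^2$ near the boundary. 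Even after confining the atoms to $[e_1,1-e_1]$ with $e_1=e^{-a_0T\epsilon_T^2}$ (which your sieve does not actually do), a single atom near the endpoint requires a grid of mesh $\sim e_1^2\epsilon_T/\alpha$, so the per-atom log-covering number is of order $T\epsilon_T^2$, and with $J_T\asymp(T/\log T)^{1/(2\beta+1)}$ atoms the total log-entropy of the whole sieve is of order $J_T\,T\epsilon_T^2\gg T\epsilon_T^2$. Condition (iii) of Theorem~\ref{th:d1} therefore fails, and no simple enlargement of $\bar J_T$ or $\bar\alpha_T$ fixes this.

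The paper gets around this by using Theorem~\ref{th:slices} rather than Theorem~\ref{th:d1}. It partitions $[e_1,1-e_1]$ into shells $(e_\ell,e_{\ell+1})$ with $e_\ell=e_0^{1/\ell}$, slices the sieve $\mathcal G_{1,T}$ by the configuration $\sigma=(k_1,\dots,k_{L_T+1})$ recording how many atoms fall in each shell, and then exploits the trade-off built into \eqref{cond:slices}: slices with many atoms in shells near the boundary have a large local $\L_1$-entropy but a correspondingly small prior probability, because the atom density $g_\epsilon$ is forced to vanish like $x^3(1-x)^3$ there (this is precisely where the hypothesis $g_\epsilon(x)\lesssim x^3(1-x)^3$ and $a\geq 3$ are used). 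The multinomial sum over configurations then collapses by the multinomial theorem, giving a finite bound $e^{CJ_{1,T}\log T}$ that satisfies \eqref{cond:slices}. Your explanation of the role of $A_1\geq3$ ("to make sure the mass is tight enough in the right regime") gestures in the right direction but misattributes it to the Dirichlet weights rather than to the entropy--prior-mass trade-off over slices. Without this slicing mechanism the corollary does not follow from Theorem~\ref{th:d1} alone.
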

Note that in the context of density estimation, $T^{-\beta/(2\beta+1)}$ is the minimax rate and we expect that it is the same for Hawkes processes. 
%%%%%%%%%%%%%%%%%%%%%%%%%%
%%%%%%%%%%%%%%%%%%%%%%%%%%
% !TEX root =BNP_Hawkes-submitted.tex

%%%%%%%%%%%%%%%%%%%%%%%%
%%%%%%%%%%%%%%%%%%%%%%%%
\section{Numerical illustration in the neuroscience context}\label{sec:numerical Hawkes}
It's now well-known that neurons receive and transmit signals as electrical impulses called {\it action potentials}. Although action potentials can vary somewhat in duration, amplitude and shape, they are typically treated as identical stereotyped events in neural coding studies. Therefore, an action potential sequence, or {\it spike train}, can be characterized simply by a series of all-or-none point events in time. Multivariate Hawkes processes have been used in neuroscience to model spike trains of several neurons and in particular to model functional connectivity between them through mutual excitation or inhibition \citep{JNM}. 
In this section, we conduct a simulation study mimicking the neural context, through appropriate choices of parameters. The protocol is similar to the setting proposed in Section 6 of \citep{HRR}.
%Hawkes processes are used in neurosciences
%to model action potentials trains of neurons. In a few words, neurons communicate through
%sequences of action potentials. Contemporary models assume that the information is conveyed by the
%action potentials' times of occurrence rather than by the action potentials' waveforms. The series of
%occurrences times are assumed to be the realization of a non homogeneous point process. The multivariate
%Hawkes processes allow to take into account the dependences/interactions between neurons
%(namely excitation or inhibition) \citep{HRR}. In this section, we conduct a simulation study, choosing the
%parameters so that the simulated data mimic action potentials trains.
%%%%%%%%%%%%%%%%%
\subsection{Simulation scenarios}
We consider three simulation scenarios involving respectively $K=2$ and $K=8$ neurons. The scenarios are roughly similar to the one tested in \cite{HRR}.  Following the notations introduced in the previous sections, for any $(k, \ell) \in \{1,\dots K\}^2$,  $h_{k, \ell}$ denotes  the interaction function of neuron $k$ over  neuron $\ell$.  We now describe the three scenarios. 
 The  upper bound of each $h_{k, \ell}$'s support, denoted $[0,A]$ is set equal to   $A = 0.04$ seconds. 
\begin{itemize}
\item \emph{Scenario 1}: We first consider  $K=2$ neurons and piecewise constant interactions:  
$$h_{1,1} = 30 \cdot \1_{(0,0.02]}, \quad h_{2,1} = 30  \cdot \1_{(0,0.01]} ,\quad h_{1,2} = 30 \cdot  \1_{(0.01,0.02]},\quad h_{2,2} = 0.$$ 
\item \emph{Scenario 2}: In this scenario, we mimic $K=8$ neurons belonging to three independent groups. The non-null interactions are the piecewise constant functions defined as: 
$$h_{2,1}=h_{3,1}=h_{2,2}=h_{1,3}=h_{2,3}=h_{8,5}=h_{5,6}=h_{6,7}=h_{7,8}=30 \cdot \1_{(0,0.02]}.$$ 
In Figure \ref{fig: true graph}, we plot the subsequent interactions directed graph between the $8$ neurons: the  vertices represent the $K$ neurons and  an oriented edge is plotted  from vertex $k$ to vertex $\ell$ if the interaction function $h_{k, \ell}$ is non-null. 
\begin{figure}
\centering
\includegraphics[width=0.3\textwidth]{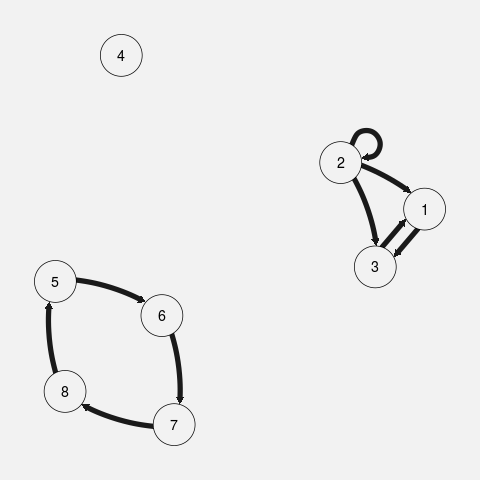}
\caption{\textbf{Scenario 2}. True interaction graph between the $K=8$ neurons. 
A directed edge is plotted  from vertex $\ell$ to vertex $k$ if the  interaction functions $h_{k, \ell}$ is non-null. }
\label{fig: true graph}
\end{figure}
\item \emph{Scenario 3}: Setting $K=2$, we consider non piecewise constant interactions functions defined as: 
 \begin{equation*}
\begin{array}{cclccl}
h_{1,1}(t) &=&100\cdot e^{-100 t}\1_{(0,0.04]}(t), &h_{2,1}(t) &=& 30  \cdot \1_{(0,0.02]}(t) \\
h_{1,2}(t) &=& \frac{1}{ 2 \times0.004\sqrt{2\pi}}e^ {-\frac{(t-0.02)^2}{2 \times 0.004^2}} \cdot  \1_{(0,0.04]}(t), \quad & h_{2,2}(t) &=& 0.
\end{array}
\end{equation*}
\end{itemize}
In all the scenarios, we consider $\nu_{\ell} = 20, \ \ell=1,\dots,K$. For each scenario, we simulate $25$ datasets on the time interval $[0, 22]$ seconds. The Bayesian  inference is performed considering  recordings  on  three possible periods  of  length $T=5$ seconds, $T=10$ seconds and $T=20$ seconds.  For any dataset, we remove the initial period of $2$ seconds --corresponding to $50$ times the length of the support of the $h_{k, \ell}$-functions, assuming that, after this period, the Hawkes processes  have reached their stationary  distribution. %Note that the  parameters $(\nu_{\ell},h_{k, \ell})_{l,k \in \{1,\dots,K\}}$ have been chosen such that, in any scenario,  approximatively $100$ points are recorded on each neuron by period  of two seconds. 
%%%%%%%%%%%%%%%%
\subsection{Prior distribution on $f= (\nu_{\ell},h_{k, \ell})_{l,k \in \{1,\dots,K\}}$}
We use the prior distribution described in Section \ref{sec:prior-model} setting a log-prior distribution on the $\nu_{\ell}$'s of parameter $\mu_\nu, s^2_{\nu}$.
%We use the prior distributions on  on $f = (\nu_{\ell},h_{k, \ell})_{l,k \in \{1,\dots,K\}}$   described in Section \ref{sec:prior-model} setting 
%
%
%$\E[  \log \nu_{\ell}] = 3$ and $\V[\log \nu_{\ell}] = 1$ (resulting into $\mathbb{E}[\nu_{\ell}]\approx 34$, $\sigma[\nu_{\ell}]\approx   47$).  
About the interaction functions  $\left(h_{k, \ell}\right)_{k, \ell \in \{1,\dots,K\}}$, the prior distribution is defined on the set of  piecewise constant functions, $h_{k, \ell}$ being written as follows: 
\begin{equation}\label{eq:form hkl} 
h_{k, \ell}(t) =  \delta^{(k, \ell)} \sum_{j=1}^{J^{(k,\ell)}} \beta^{(k, \ell)}_{j}\1_{[t^{(k, \ell)}_{j-1},t^{(k, \ell)}_{j}]}(t)
\end{equation}
with $t^{(k, \ell)}_{0} = 0$ and  $t^{(k, \ell)}_{J^{(k,\ell)}} = A$.  
Using the notations in Section \ref{sec:prior-model}, we have $\beta^{(k,\ell)}_j =\rho^{(k,\ell)}  \omega^{(k, \ell)}_{j}$. Here,
$\delta^{(k, \ell)}$ is a global parameter  of nullity for $h_{k, \ell}$ :   for all $(k, \ell) \in \{1,\dots, K\}^2$, 
\begin{equation}\label{eq: prior delta} \delta^{(k, \ell)} \sim_{i.i.d} \mathcal{B}ern(p).\end{equation}
%The idea of this parameter is to encourage complete nullity for the  interaction functions and so a sparse interaction network between neurons. 
%\item For non-null intensities functions, we work in a non-parametric framework.  
For all $(k, \ell) \in \{1,\dots, K\}^2$, the number  of steps $(J^{(k,\ell)})$ follows  a translated Poisson prior distribution: 
\begin{equation}\label{eq: prior M} J^{(k,\ell)} |\{\delta^{(k, \ell)}=1\}  \sim_{i.i.d.} 1+ \mathcal{P}(\eta). \end{equation}
To minimize the influence of $\eta$ on the  posterior distribution, we consider an hyperprior distribution on the hyperparameter $\eta$: 
\begin{equation}\label{eq: prior lambda}  \eta \sim \Gamma(a_{\eta}, b_{\eta}). \end{equation}
Given $J^{(k,\ell)}$, we consider a spike and slab prior distribution on $(\beta^{(k, \ell)}_{j})_{j=1,\dots, J^{(k,\ell)}} $. Let    $Z^{(k, \ell)}_{j}  \in \{0,1\}$ denote a sign indicator for each step, we set:   $\forall j \in\{1,\dots, J^{(k,\ell)}\}$: 
\begin{equation}\label{eq: prior alpha}
\begin{array}{ccl}
\P\left(Z^{(k, \ell)}_{j} =z | \delta^{(k, \ell)}=1\right) &=& \pi_{z}, \quad \forall z \in \{0,1\}\\
\beta^{(k, \ell)}_{j} | \{ \delta^{(k, \ell)}=1\}& \sim& Z^{(k, \ell)}_{j} \times \log \mathcal{N}(\mu_{\beta}, s^2_{\beta}).
\end{array}
\end{equation}
%\item On $(s_{k, \ell}^ {(j)})_{j=0\dots J_{\ell k}} $, we consider two possible prior distributions. In the first one, referred as \emph{the regular prior} in the following,  $(s_{k, \ell}^ {(j)})_{j=0\dots J_{\ell k}} $ is set equal to a regular partition of $[0,A]$: 
We consider two prior distributions on $(t_j^{(k,\ell)})_{j=1\dots J^{(k,\ell)}}$. The first one (refered as the Regular histogram prior) is a regular partition of $[0,A]$:  
\begin{equation}\label{eq: s regular}
t^{(k, \ell)}_{j} = \frac{j}{J^{(k,\ell)}} A \quad \quad  \forall j=0,\dots, J^{(k,\ell)} . 
\end{equation}
The second prior distribution is refered as random histogram prior and specifies:  
\begin{equation}\label{eq: s moving}
\begin{array}{ccl}
(u_1, \dots, u_{J^{(k,\ell)}}) &\sim& \mathcal{D}(\alpha'_1, \dots \alpha'_{J^{(k,\ell)}})\\
t^{(k, \ell)}_ {0} &=&0\\
t^{(k, \ell)}_{j} &=& A  \sum_{r=1}^ j u_r, \quad \forall j=1,\dots, J^{(k,\ell)}
\end{array}
\end{equation}
%\end{itemize}
In the simulations studies, we set the following hyperparameters: 
\begin{equation*}
\begin{array}{cclcccl}
\mu_{\beta} &=& 3.5, &s_{\beta}&=&1\\
\mu_{\nu} &=& 3.5, &s_{\nu}&=&1\\
\P(Z^{(k,\ell)}_{j}=1) &=& 1/2,&  \P(\delta^{(k,\ell)}=1)=p&=&1/2\\
\alpha'_j&=&2, \quad \forall j\\
\end{array}
\end{equation*}
%%%%%%%%%%%%%%%%%%
\subsection{Posterior sampling}
The posterior distribution is sampled using a standard Reversible-jump Markov chain Monte Carlo.  % For the sake of reading, we introduce  the following notations : 
%\begin{equation}
%\begin{array}{cclccl}
%\boldsymbol{\nu}^{(c)} &=& (\nu^{c}_1, \dots, \nu^{(c)}_K), & \boldsymbol{h} &=&   (h_{k, \ell})_{ (k, \ell) \in \{1,\dots, K\}^2}\\
%\end{array}
%\end{equation}
\noindent Considering the current parameter $(\boldsymbol{\nu}, \boldsymbol{h})$,  $\boldsymbol{\nu}^{(c)}$ is proposed using a Metropolis-adjusted Langevin proposal. %:   ${\displaystyle \boldsymbol{\nu}^{(c)} :=\boldsymbol{\nu} +\tau\left[\nabla \log \pi(\boldsymbol{\nu})+ \nabla \log L\left(\boldsymbol{\nu}, \boldsymbol{h}\right)\right]+{\sqrt {2\tau }}\xi ^{(i)},}$  
%where $\xi ^{(i)} \sim \mathcal{N}(\boldsymbol 0_K, \boldsymbol I_K)$. 
  For a fixed $J^{(k,\ell)}$, the heights $\beta^{(k,\ell)}_{j}$ are proposed using a random walk proposing null or non-null candidates.  Changes in the number of steps $J^{(k,\ell)}$ are proposed by standard birth and death moves \citep{Green1995}.  In this simulation study, we generate chains of length $30000$ removing the first $10000$ burn-in iterations. The algorithm is implemented in \textsf{R} on an Intel(R) Xeon(R) CPU E5-1650 v3 @ 3.50GHz.

%\noindent The algorithm is implemented in \textsf{R}. We  generated MCMC chains of length  $N = 30000$. 

% The posterior is sampled for both prior distributions on $(\textbf{s}_{\ell,l})_{ (k, \ell) \in \{1,\dots, K\}^2}$ (see equations \ref{eq: s moving} and  \ref{eq: s regular} ). 

%

\vspace{1em}

\noindent The computation times (mean over the $25$ datasets) are given in Table \ref{tab : compu time}. First note that the computation time increases roughly as a linear function of $T$. This is due to the fact that the heavier task in the algorithm is the integration of the conditional likelihood and the computation time of this operation is roughly a linear function of the length of the integration (observation) time interval. Besides, because we implemented  a Reversible Jumps algorithm, the computation time is a stochastic quantity: the algorithm can explore parts of the domain where the number of bins $J_{\ell k}$ is large, thus increasing the computation time. This point can explain the unexpected computation times for $K=2$.  Moreover, we remark that the computation time explodes as $K$ increases (due to the  fact that $K^2$ intensity functions have to be estimated), reaching computation times greater than  a day.

\begin{table}[ht]
\centering

\begin{tabular}{lcccc}
\hline
 &  \multicolumn{2}{c}{K=2}  & K=8 & K=2 with  smooth $h_{k, \ell}$ \\ 
 \hline

 Prior on $t$ &Regular & Random  & Regular & Random  \\
 \hline
  T=5 & 1508.44 & 1002.45 &  & 823.84 \\ 
 T=10 & 1383.72 & 1459.55 & 37225.19 & 1284.93 \\ 
 T=20 &  2529.19 & 2602.48 & 49580.18 & 1897.17 \\ 
   \hline
\end{tabular}
\caption{Mean computation time  (in seconds) of the MCMC algorithms as a function of the scenario, the observation time interval and the prior distribution on $s$. The mean is computed  over the $25$ simulated datasets} 
\label{tab : compu time}
\end{table}

\subsection{Results}

We describe here the results for each scenario. We first present the $\L_1$-distances on  $\lambda^{k}$  and $h_{k, \ell}$ for all $3$ the scenarios, all three length observation times $T$ and the two prior distributions. In Table \ref{tab : L1 distance}, we show the estimated $\L_1$-distances on  $\lambda^{k}$  and $h_{k, \ell}$. More precisely, we evaluate the $\L_1$-distances on the interactions functions
$$ D^{(1)}=  \frac{1}{25} \sum_{sim = 1}^{25}  \widehat{\E} \left[ \frac{1}{K^2} \sum_{ k, \ell=1}^{K} \left\| h_{k,\ell}- h^{0}_{k,\ell}\right \|_1 \biggr| (N^{sim}_t)_{t \in [0,T]}\right]$$ and 
the following stochastic distance : 
$$D^{(2)}  = \frac{1}{25} \sum_{sim=1}^{25} \widehat{\E}\left[  d_{1,T} ( f, f^{0}) )\biggr| (N^{sim}_t)_{t \in [0,T]}  \right],$$
 where $f^{0}$ is the true set of parameters, $d_{1,T}(f,f^0)$ has been defined in Section ~\ref{sec:notation} and the posterior expectations are approximated by Monte Carlo method using the outputs of the Reversible Jumps algorithm.

As expected, the error decreases as $T$ increases. As we will detail later, the random histogram prior  gives better results than the regular prior. Finally, we perform better when the true interaction function $(h_{k, \ell})$ are step functions (due to the form of the prior distribution).   

%\textcolor{red}{ Sophie tu divises par T dans ta distance L1 sur lambda? peut \^etre qu'on devrait pour rendre les resultats plus lisibles}
%\textcolor{blue}{ Oui je divisais bien. J'ai d\'efini plus clairement les quantit\'es dans le tableau en utlisant vos notations}

\begin{table}[ht]
\centering
\begin{tabular}{llcccc}
\cline{2-6}

& &  \multicolumn{2}{c}{K=2}  & K=8 & K=2 with  smooth $h_{k, \ell}$ \\ 
\cline{2-6}
& Prior&Regular & random & Regular & random \\
 \hline
  % \hline

\multirow{3}{*}{$D^(1) $:  stochastic distances} &  T=5 & 11.59 & 9.59 &&  11.75   \\ 
 & T=10 & 7.49 & 6.32 & 5.65 & 9.48 \\ 
 & T=20 & 5.40 & 4.11 & 3.17 & 7.9  \\ 
   \hline
 %  \hline

%1 & \\ 
  %2 &  \\ 
  %3 &8 \\ 
  
\multirow{3}{*}{$D^(2) $: distances on $h_{k, \ell}$} &  T=5 &0.1423 & 0.0996 &  & 0.1431 \\ 
 & T=10 &0.0844 & 0.0578 & 0.1199 & 0.1131 \\ 
 & T=20 & 0.0564 & 0.0336 & 0.0616 & 0.0945 \\ 
   \hline
\end{tabular}
\caption{$\L_1$-distances on  $h_{k, \ell}$ and  $\lambda^k$} 
\label{tab : L1 distance}
\end{table}
%\textcolor{red}{Calculs en train de tourner pour $\lambda^*$
%}

\subsubsection{Results for scenario 1: $K=2$ with step functions}
When $K=2$, we estimate  the parameters using both \emph{regular} and \emph{random} prior distributions on $(t^{(k, \ell)}_j)$ (equations (\ref{eq: s regular}) and (\ref{eq: s moving})). 
One typical posterior distribution of $\nu_{\ell}$ is given in Figure \ref{fig:post nu M=2} (left), for a randomly chosen dataset, clearly showing a smaller variance when the length of the observation interval increases.  We also present the global estimation results, over the 25 simulated datasets. The distribution of the posterior mean estimators for $(\nu_1,\nu_2)$ computed for the 25 simulated datasets $\left(\widehat{\mathbb{E}}\left[\nu_{\ell}| (N^{sim}_t)_{t \in [0,T]}\right]\right)_{sim=1 \dots 25}$ is given in Figure \ref{fig:post nu M=2}  on the right panel, showing an expected  decreasing variance for the estimator as $T$ increases.  On the top panels the posterior is based on the regular grid prior while on the bottom the posterior is based on the random (grid) histogram prior:  the results are equivalent. 

\vspace{1em}

\noindent About  the estimation of the interaction functions, for the same given dataset, the estimation of the $h_{k, \ell}$  is plotted  in Figure \ref{fig:post h M=2} (upper panel)  for the regular prior, with its credible interval. Its corresponding estimation with the random prior is  given in Figure \ref{fig:post h M=2} (bottom panel). For both prior distributions, the functions are globally well estimated, showing a clear concentration when $T$ increases.  The regions where the interaction functions are null are also well identified. The estimation given with the random histogram prior is in general better than the one supplied  by the regular prior. This may be due to several factors. First, the random histogram prior leads to a sparser estimation than the regular one. Secondly, it is easier to design a proposal move in the Reversible Jump algorithm  in the  former case than in the latter context. 

\noindent  Moreover,  the interaction graph is perfectly inferred since the posterior probability for $\delta^{(2,2)}$ to be $0$ is almost $1$. For the 25 dataset, we estimate the posterior probabilities $\widehat{\P}(\delta^{(k, \ell)}=1| (N^{sim}_t)_{t \in [0,T]} )$ for $k, \ell =1,2$ and $sim=1\dots 25$. In Table \ref{tab:post delta M=2}, we display the mean of these posterior quantities. Even for the shorter observation time interval ($T=5)$ these quantities --defining completely the connexion graph--  are well recovered. These results are improved when $T$ increases.  Once again, the  random histogram prior (\ref{eq: s moving})  gives better results.

 \begin{figure}
  \centering
\includegraphics[width=\textwidth,height=0.25\textheight] {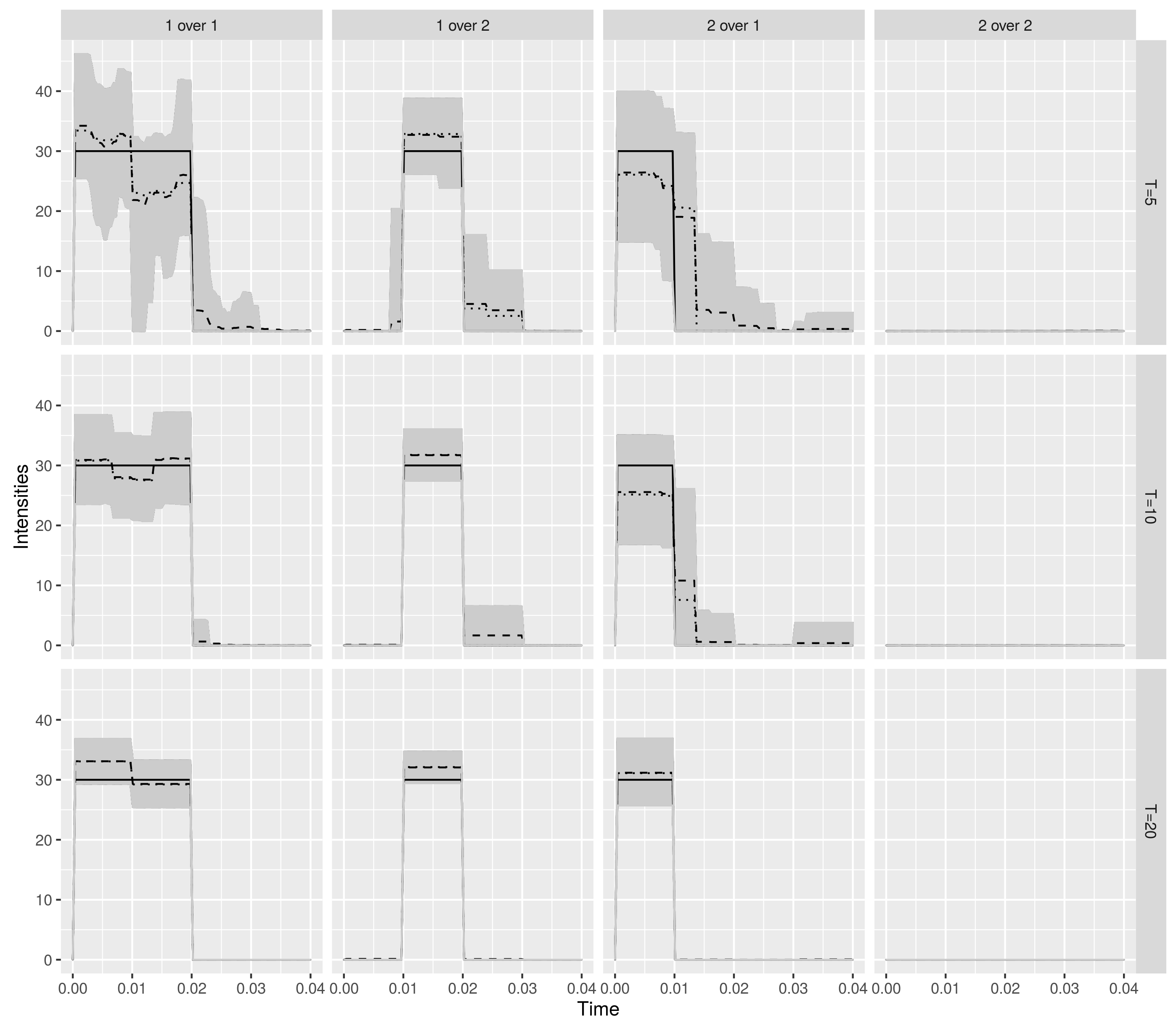}
\includegraphics[width=\textwidth,height=0.25\textheight]{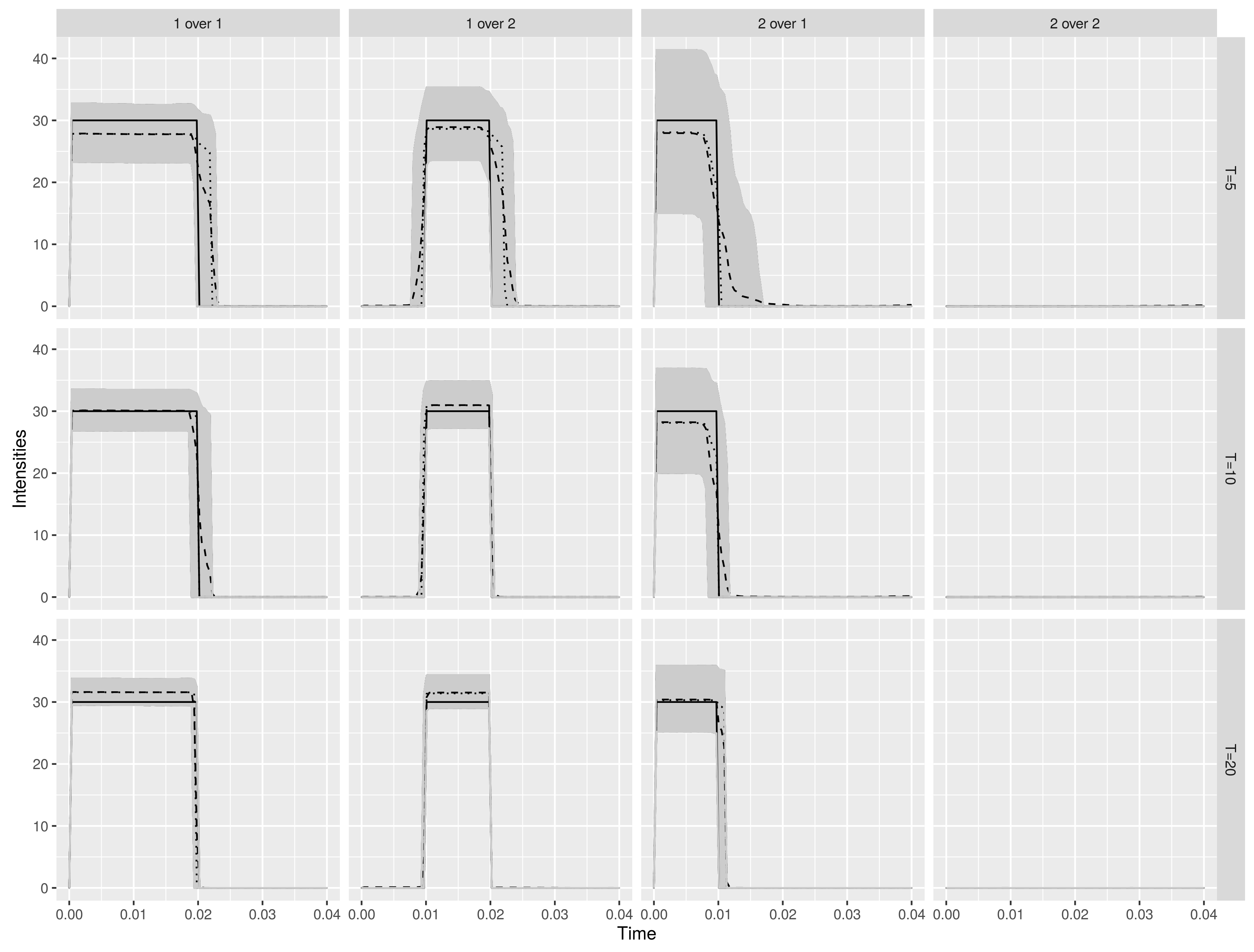}

 \caption{\textbf{Results for scenario 1}: Estimation of the $(h_{k, \ell})_{k, \ell=1,2}$ using the regular prior (upper panel) and the random histogram prior (bottom panel). The gray region indicates the credible region for $h_{k, \ell}(t)$ (delimited by the $5\%$ and $95\%$ percentiles of the posterior distribution).  The true  $h_{k, \ell}$ is in plain line, the posterior expectation and posterior median  for  $h_{k, \ell}(t)$ are in dotted and dashed lines  respectively. }
\label{fig:post h M=2}
\end{figure}

\begin{figure}
  \centering
 \begin{minipage}{0.5\linewidth}
\centering
\includegraphics[width=\textwidth]{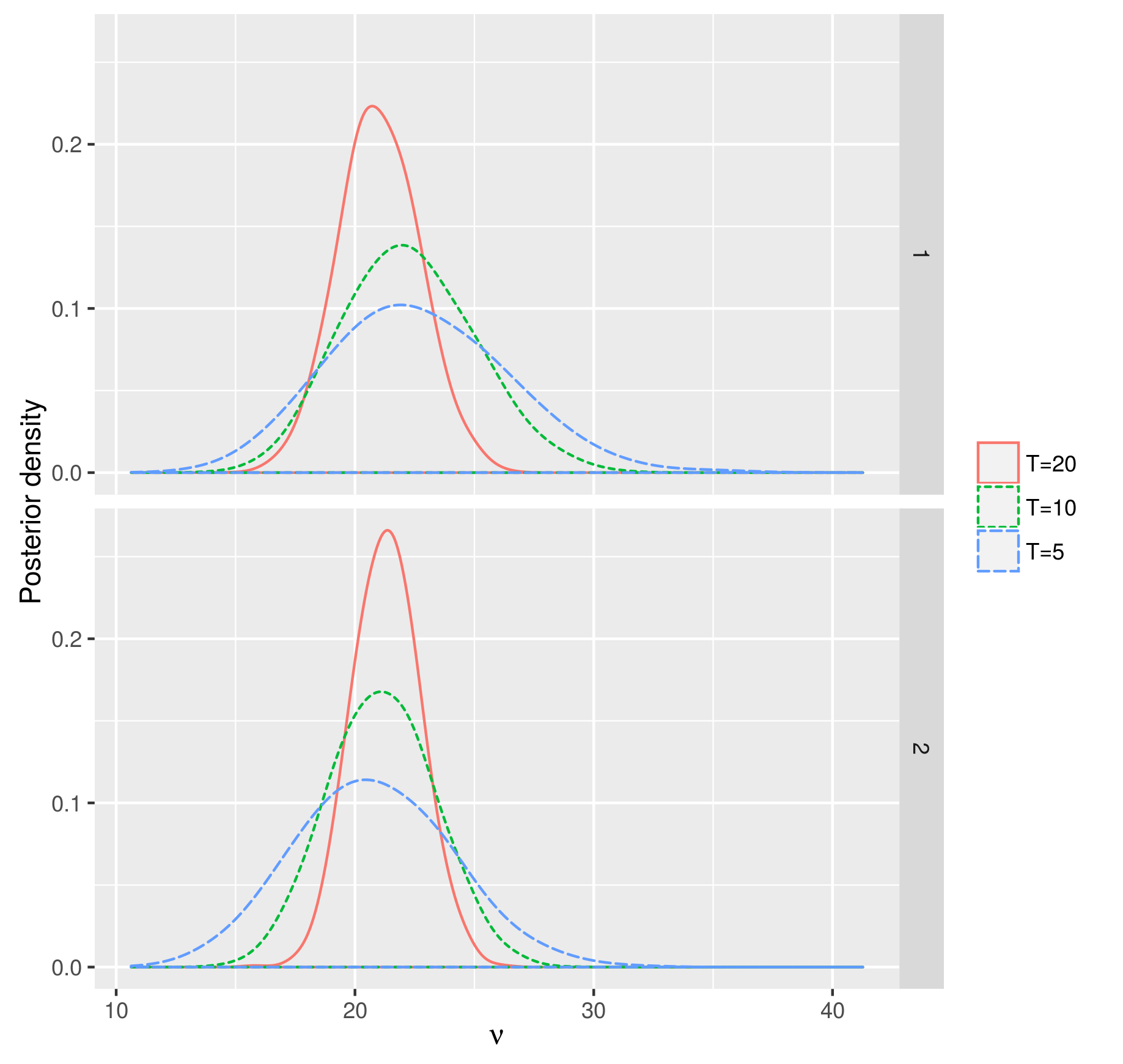}
\end{minipage}\hfill 
\begin{minipage}{0.5\linewidth}
\centering
\includegraphics[width=\textwidth]{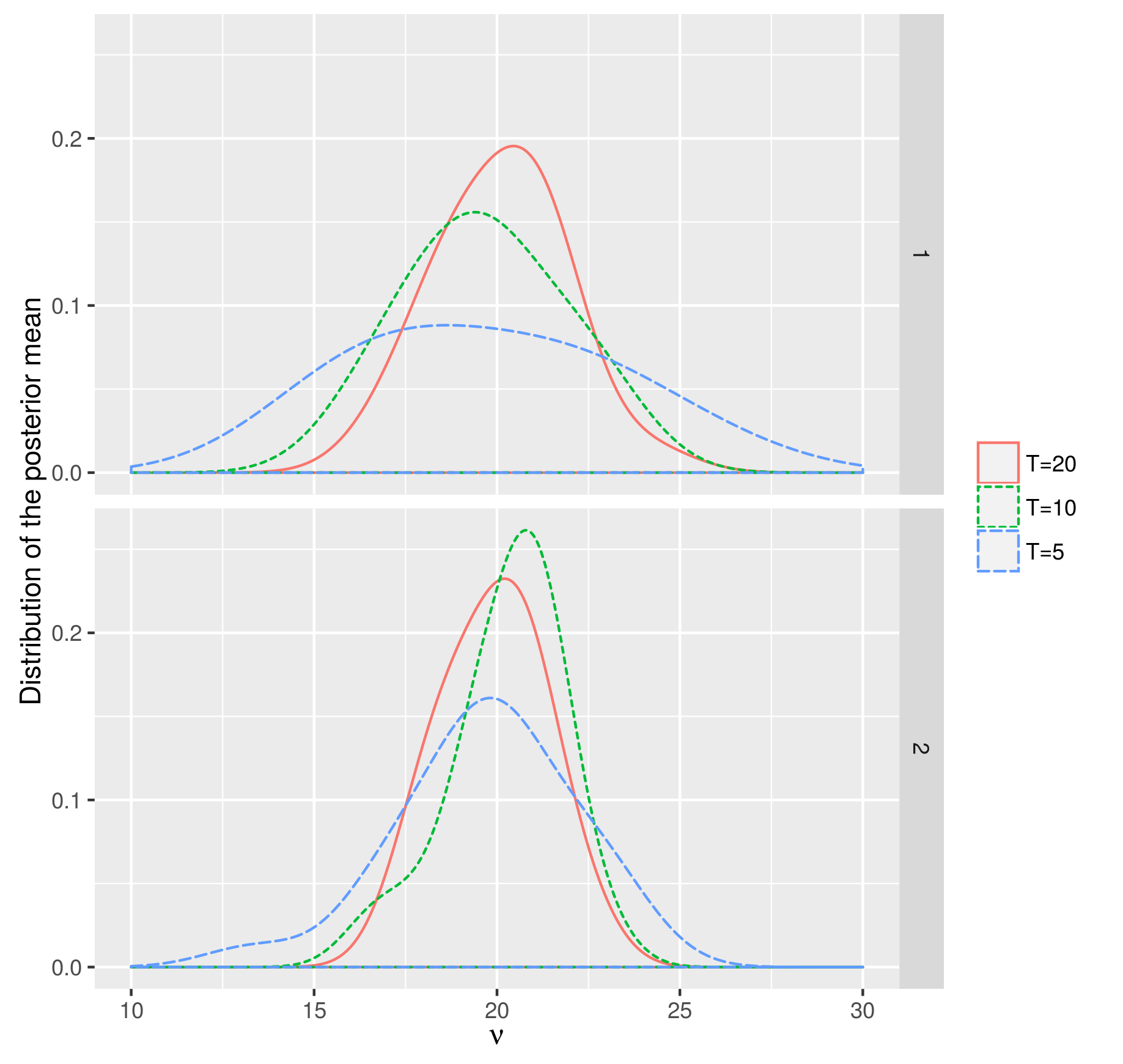}
\end{minipage}\hfill
  \caption{\textbf{Results for scenario 1}: \textit{On the left}, posterior distribution of $(\nu_1,\nu_2)$ with $T=5$, $T=10$ and $T=20$ for one dataset.  \textit{On the right}, distribution of the posterior mean of $(\nu_1,\nu_2)$  $\left(\widehat{\mathbb{E}}\left[\nu_{\ell}| (N^{sim}_t)_{t \in [0,T]}\right]\right)_{sim=1 \dots 25}$  over the 25 simulated datasets. Top : regular histogram; bottom : random histogram}
\label{fig:post nu M=2}
  %\label{fig:sfig2}
\end{figure}

%\begin{figure}
%\centering
%\includegraphics[width=\textwidth,height=0.25\textheight] {estim_h_M2_regular_sim5.png}
%\includegraphics[width=\textwidth,height=0.25\textheight]{estim_h_M2_continuous_sim5.png}
%\caption{\textbf{Scenario 1, K=2}. Estimation of the $(h_{k, \ell})_{k, \ell=1,2}$ using the regular prior (upper panel) and the random histogram prior (bottom panel). The gray region indicates the credible region for $h_{k, \ell}(t)$ (delimited by the $5\%$ and $95\%$ percentiles of the posterior distribution).  The true  $h_{k, \ell}$ is in plain line, the posterior expectation and posterior median  for  $h_{k, \ell}(t)$ are in dotted and dashed lines  respectively. }
%\label{fig:post h M=2}
%\end{figure}
%
%
%
%\begin{figure}
%\centering
%\begin{minipage}{0.5\linewidth}
%\centering
%\includegraphics[width=\textwidth]{post_nu_M2_sim=5.png}
%\end{minipage}\hfill 
%\begin{minipage}{0.5\linewidth}
%\centering
%\includegraphics[width=\textwidth]{post_estimnu_M2.png}
%\end{minipage}\hfill
% 
%\caption{\textbf{Results for scenario 1}. \textit{On the left}, posterior distribution of $(\nu_1,\nu_2)$ with $T=5$, $T=10$ and $T=20$ for one dataset.  \textit{On the right}, distribution of the posterior mean of $(\nu_1,\nu_2)$  $\left(\widehat{\mathbb{E}}\left[\nu_{\ell}| (N^{sim}_t)_{t \in [0,T]}\right]\right)_{sim=1 \dots 25}$  over the 25 simulated datasets. Top : regular histogram; bottom : random histogram}
%\label{fig:post nu M=2}
%\end{figure}
%

\noindent 

\begin{table}[ht]
\centering
\begin{tabular}{crrrrr}
  \hline
 $\ell $ over $k$   && $1$ over $1$ &$1$ over $2$ & $2$ over $1$ & $2$ over $2$ \\ 
\hline

    True value of $\delta^{(k, \ell)}$&  & $1$& $1$& $1$& $0$\\
    \hline
 & Prior& &&&\\
  \hline

\multirow{2}{*}{$T=5$}& Regular & 1.0000 & 0.8970 & 1.0000 & 0.0071 \\ 
 &Continous & 1.0000 & 0.9812 & 1.0000 & 0.0196 \\ 
 \hline 
 \multirow{2}{*}{$T=10$}&Regular& 1.0000 & 0.9954 & 1.0000 & 0.0047 \\ 
  &Continous & 1.0000 & 1.0000 & 1.0000 & 0.0102 \\ 
  \hline
  \multirow{2}{*}{$T=20$} &Regular& 1.0000 & 1.0000 & 1.0000 & 0.0099 \\ 
  &random & 1.0000 & 1.0000 & 1.0000 & 0.0102 \\ 
   \hline
\end{tabular}
\caption{\textbf{Scenario 1, K=2}.  Mean of the posterior estimations:  $ \frac{1}{25}\sum_{sim = 1}^ {25} \widehat{\P}(\delta^{(k, \ell)}=1| (N^{sim}_t)_{t \in [0,T]} )$, for the  three observation time intervals and the  two  prior distributions.}
\label{tab:post delta M=2}
\end{table}

\vspace{1em}

\noindent Finally,  we also have a look at the conditional intensities $ \lambda^k_t=\nu_{k}+\sum_{\ell=1}^K\int_{-\infty}^{t-}h_{\ell, k}(t-u)dN^{(\ell)}_u$.   On Figure  \ref{fig: post intensities  M=2}, we plot  $50$ realizations of the conditional intensity from the posterior distributions. More precisely, for one given dataset,  for $50$ parameters $\theta^{(i)} =\left((h^{(i)}_{k, \ell})_{k, \ell}, (\nu^{(i)}_k)_{k=1\dots K}\right)$ sampled from the posterior distribution (obtained at the end of the MCMC chain), we compute the corresponding $(\lambda^{k(i)}_t)$ and plot them. For the sake of clarity, only the conditional intensity of the first process ($k=1$) is  plotted and we restrict the graph to a short time interval $[3.2,3.6]$.  As noticed before, the conditional intensity is well reconstructed, with a clear improvement of the precision as the length of the observation time $T$ increases.  
\begin{figure}
\centering
\includegraphics[width=\textwidth, height = 0.4 \textheight]{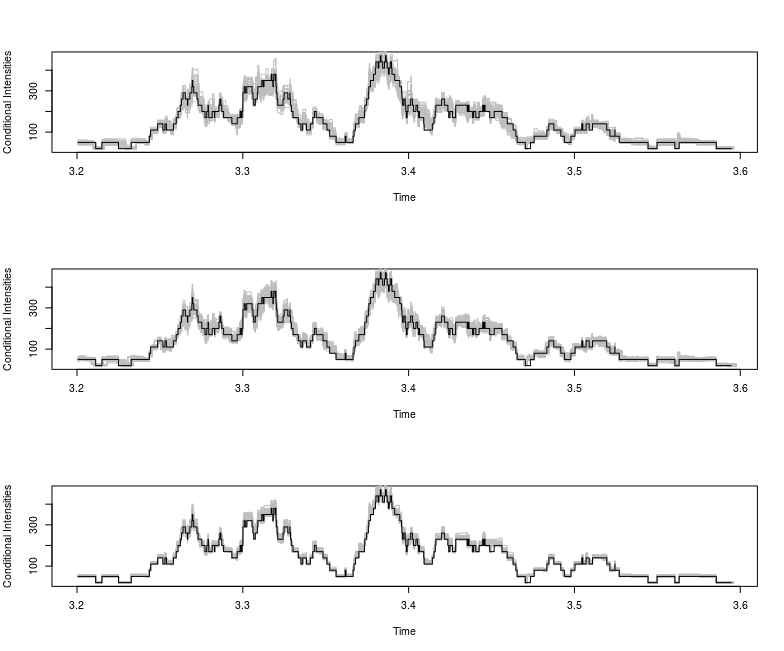}
\caption{\textbf{Scenario 1}. Conditional intensity $ \lambda^1_t$ : 50 realizations of $\lambda^1_t$  from the posterior distribution for one particular dataset and $3$ lengths of observation interval ($T=5$ on the first row $T=10$ on the second row,  and $T=20$ on the third row). True conditional intensity in black plain line. }
\label{fig: post intensities  M=2}
\end{figure}

\vspace{1em}

\subsubsection{Results for scenario 2: $K=8$}
%\textcolor{red}{why don't you present results with random grid histograms?}
In this scenario, we perform the Bayesian inference using  only the regular prior distribution on    $(\textbf{t}^{(k,\ell)})_{ (k, \ell) \in \{1,\dots, K\}^2}$ and two lengths of observation interval ($T=10$ and $T=20$). Here we set  $a_{\eta}= 3$  and $b_{\eta}=1$.  

\vspace{1em}

\noindent The posterior distribution of the $(\nu_{k})_{k=1\dots K}$ for a  randomly chosen dataset is plotted in Figure \ref{fig: post nu M=8}.  The prior distribution is in dotted line and is flat. The posterior distribution concentrates around the true value (here $20$) with a smaller variance when  $T$ increases. 
\begin{figure}
\centering
\includegraphics[height=0.5 \textheight]{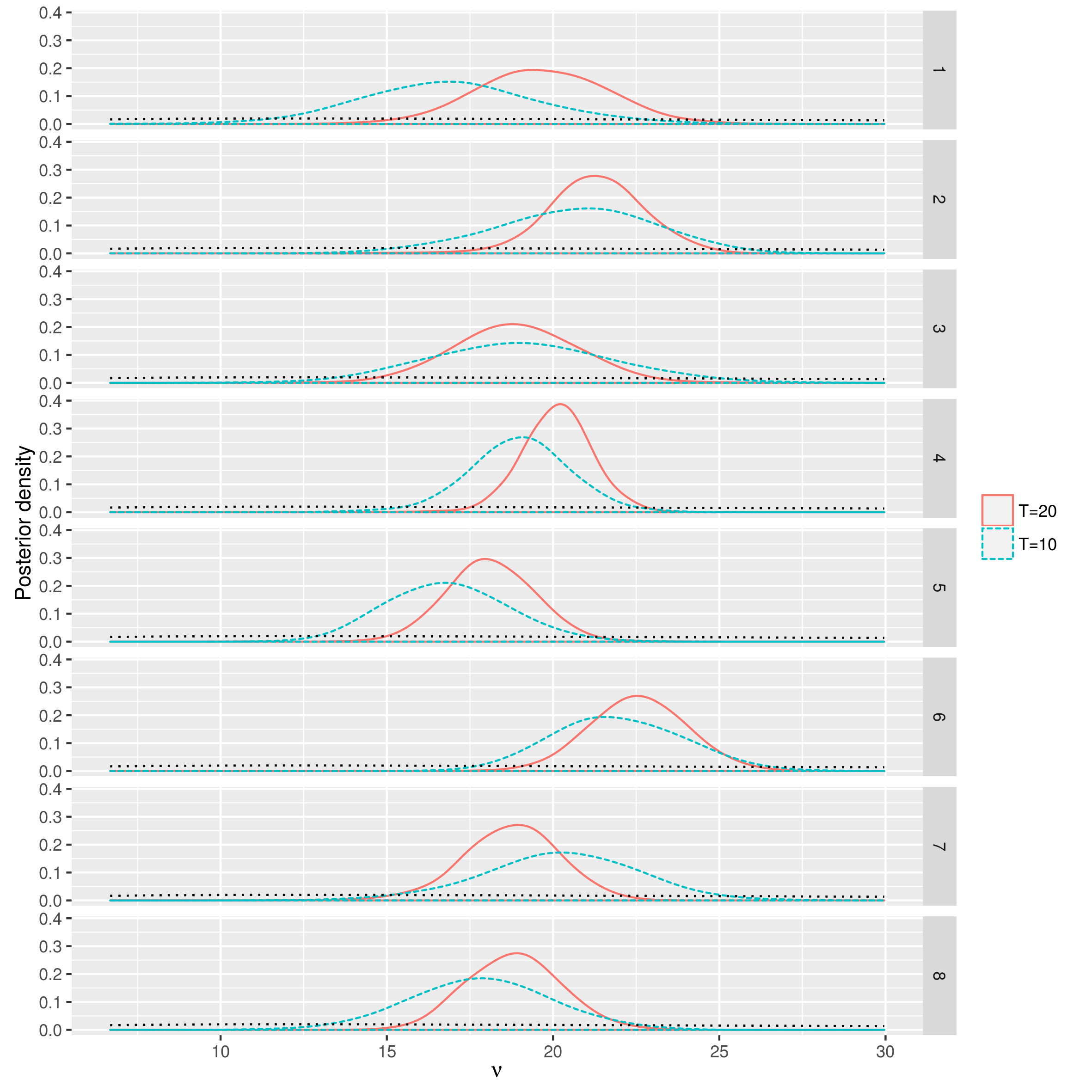}
\caption{\textbf{Scenario 2}: Results on $(\nu_{\ell})_{k=1\dots K}$ for a particular dataset:  Prior distribution (dotted line), Posterior distributions for $T=10$ ( dashed line) and $T=20$ (plain line)}
 \label{fig: post nu M=8}
\end{figure}

\begin{figure}
\centering \includegraphics[height=0.5 \textheight]{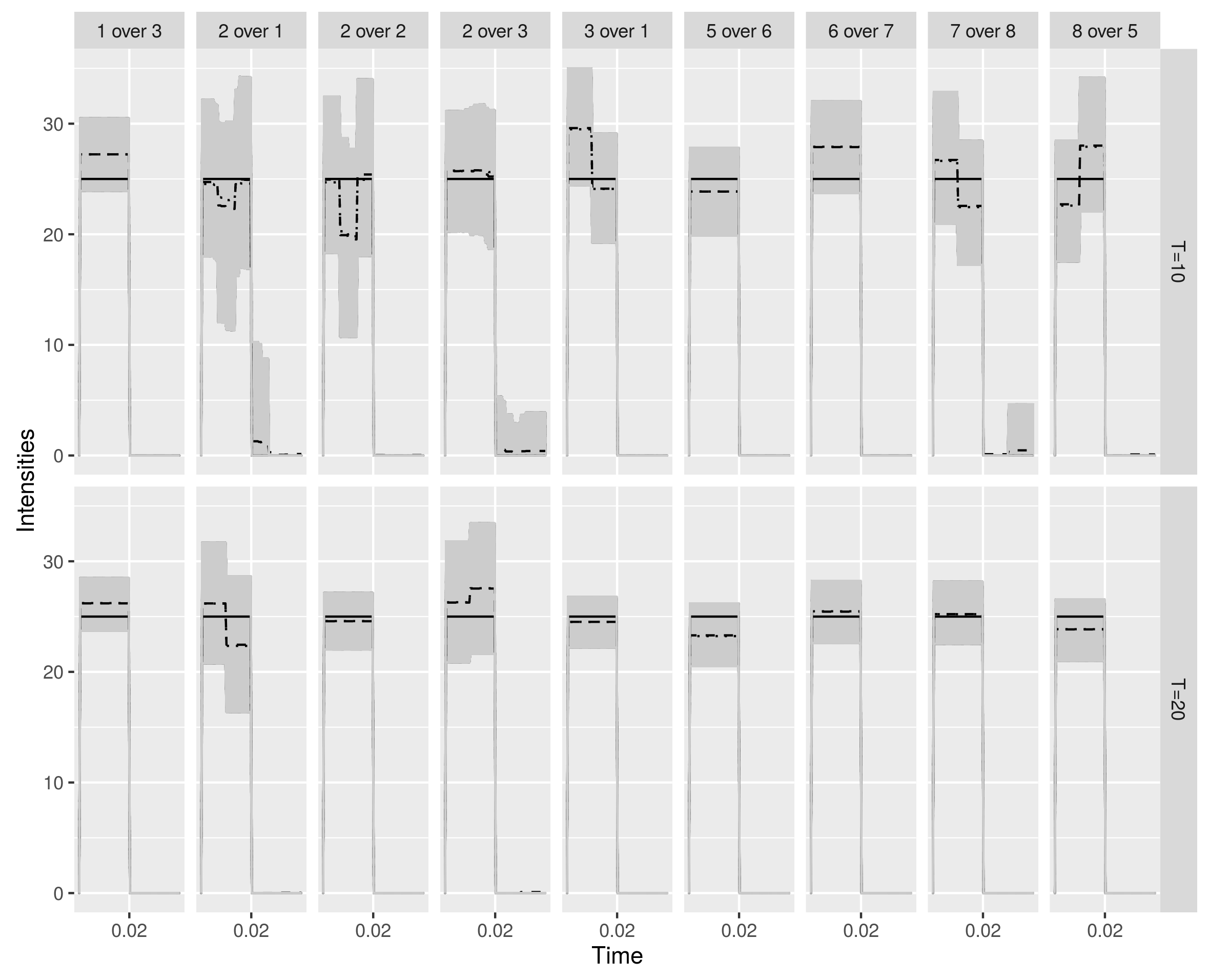}
\caption{\emph{Scenario 2}: Estimation of the non null interaction functions $(h_{k, \ell})_{k, \ell=1,\dots, 8}$ using the regular prior for $T=10$ (upper panel) and $T=20$ (bottom). The gray region indicates the credible region for $h_{k, \ell}(t)$ (delimited by the $5\%$ and $95\%$ percentiles of the posterior distribution).  The true  $h_{k, \ell}$ is in plain line, the posterior expectation and posterior median  for  $h_{k, \ell}(t)$ are in dotted and dashed lines  respectively (often undistinguishable).
}
\label{fig: M=8 post h}
\end{figure}

\vspace{1em}

\noindent In the context of  neurosciences, we are especially interested in recovering the interaction graph of the $K=8$ neurons.  In Figure \ref{fig: M=8: post graph}, we consider the same dataset as the one used in Figure \ref{fig: post nu M=8} and  plot the posterior estimation of the interaction graph, for respectively $T=10$ on the left and $T=20$ on the right.   The width and the gray level of the edges are proportional to the estimated posterior probability $\widehat{\P}(\delta^{(k, \ell)}=1 | (N_t)_{t \in [0,T]})$.  The global structure of the graph is recovered (to be compared to the true graph plotted in  Figure \ref{fig: true graph}).  We observe that the false positive edges appearing when $T=10$ disappear when $T=20$.   In Figure  \ref{fig: M=8: post graph all},  we consider the mean of the estimates of the graph over the 25 datasets. The resulting graph for $T=10$ is on the left and for $T=20$ on the right. 

\noindent Note that, in this example,   for any $(k, \ell)$ such that  the true $\delta^{(k, \ell)}=1$, the estimated posterior probability $\widehat{\P}(\delta^{(k, \ell)}=1 | (N^{sim}_t)_{t \in [0,T]})$ is equal to $1$, for any dataset  and any length of observation interval. In other words, the non-null interactions are perfectly recovered. In a simulation scenario with other  interaction  functions, the results could have been different.

\begin{figure}
\begin{minipage}{0.5\linewidth}
\centering
\includegraphics[width=0.9 \textwidth]{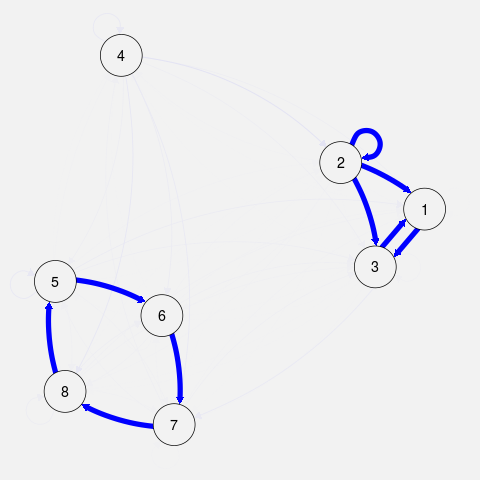}
\end{minipage}\hfill 
\begin{minipage}{0.5\linewidth}
\centering
\includegraphics[width=0.9 \textwidth]{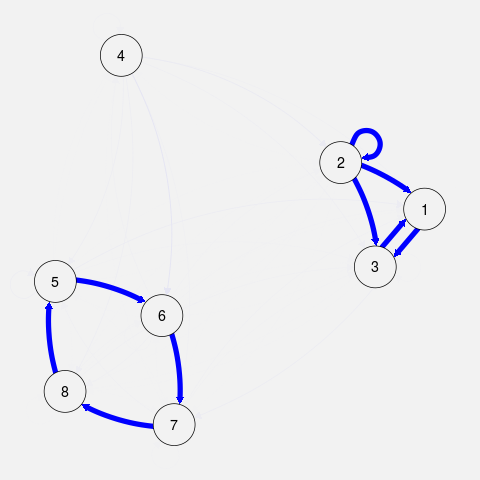}
\end{minipage}
\caption{\textbf{Results for scenario 2  for one given dataset}:  Posterior estimation of the interaction graph for    $T=10$ \textit{on the left} and $T=20$ \textit{on the right}, for one randomly chosen dataset. Level of grey and width of the edges proportional to the posterior estimated probability of  $\widehat{\P}(\delta^{(k, \ell)}=1| (N^{sim}_t)_{t \in [0,T]} )$.}
\label{fig: M=8: post graph}
\end{figure}

\begin{figure}
\begin{minipage}{0.5\linewidth}
\centering
\includegraphics[width=0.9 \textwidth]{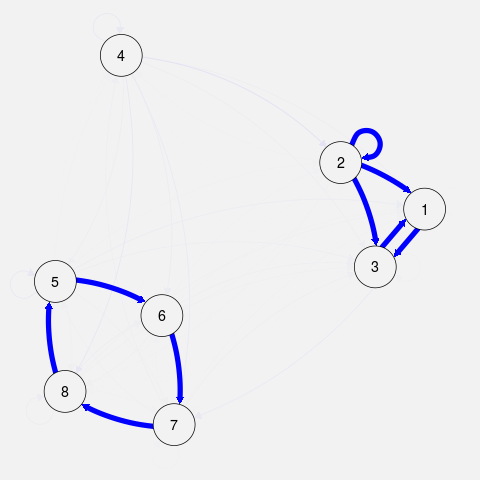}
\end{minipage}\hfill 
\begin{minipage}{0.5\linewidth}
\centering
\includegraphics[width=0.9 \textwidth]{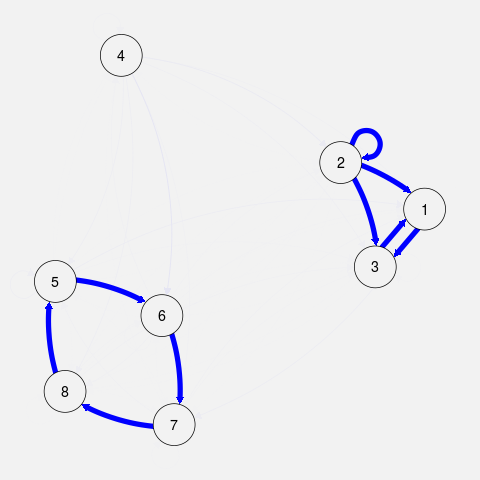}
\end{minipage}
\caption{\textbf{Results for scenario 2 over the $25$ simulated datasets}:  Posterior estimation of the interaction graph   for    $T=10$ \textit{on the left} and $T=20$ \textit{on the right}.    Level of grey and width of the edges are proportional to the posterior estimated probability of $\frac{1}{25}\sum_{sim = 1}^ {25} \widehat{\P}(\delta^{(k, \ell)}=1| (N^{sim}_t)_{t \in [0,T]} )$. }
\label{fig: M=8: post graph all}
 
%\caption{\textbf{Results for scenario 2} : interaction graphs}
%\label{fig:fig}
\end{figure}
%
%\begin{figure}
%\centering
%\begin{minipage}{0.5\linewidth}
%\centering
%\includegraphics[width=0.9 \textwidth]{graph_M8_sim2_T=10.png}
%\end{minipage}\hfill 
%\begin{minipage}{0.5\linewidth}
%\centering
%\includegraphics[width=0.9 \textwidth]{graph_M8_sim2_T=20.png}
%\end{minipage}\hfill
% 
%\caption{\textbf{Results for scenario 2  for one given dataset}. Posterior estimation of the interaction graph for    $T=10$ \textit{on the left} and $T=20$ \textit{on the right}, for one randomly chosen dataset. Level of grey and width of the edges proportional to the posterior estimated probability of  $\widehat{\P}(\delta^{(k, \ell)}=1| (N^{sim}_t)_{t \in [0,T]} )$.  }
%\label{fig: M=8: post graph}
%\end{figure}

%\begin{figure}
%\centering
%\begin{minipage}{0.5\linewidth}
%\centering
%\includegraphics[width=0.9 \textwidth]{graph_M8_sim_all_T=10.png}
%\end{minipage}\hfill 
%\begin{minipage}{0.5\linewidth}
%\centering
%\includegraphics[width=0.9 \textwidth]{graph_M8_sim_all_T=20.png}
%\end{minipage}\hfill
% 
%\caption{\textbf{Results for scenario 2 over the $25$ simulated datasets}. Posterior estimation of the interaction graph   for    $T=10$ \textit{on the left} and $T=20$ \textit{on the right}.    Level of grey and width of the edges are proportional to the posterior estimated probability of $\frac{1}{25}\sum_{sim = 1}^ {25} \widehat{\P}(\delta^{(k, \ell)}=1| (N^{sim}_t)_{t \in [0,T]} )$.}
%\label{fig: M=8: post graph all}
%\end{figure}

\noindent In Figure \ref{fig: M=8 post h}, we plot the posterior means (with credible regions)  of the non-null interaction functions for the   same simulated dataset as  in Figure \ref{fig: M=8: post graph}.   The time intervals where the interaction functions are null are again perfectly recovered. The posterior incertainty around the non-null functions  $h_{k, \ell}$  decreases when $T$ increases.

\subsubsection{Results for scenario 3 : $K=2$ with smooth functions}
In this context, we perform the inference using the random histogram prior distribution (\ref{eq: s moving}). In this case, we set $a_{\eta}= 10$  and $b_{\eta}=1$. thus encouraging a greater number of step in the interactions functions.    The behavior of the posterior distribution of $\nu_{k}$ is the same as in the other examples. In Figure \ref{fig: M=2 smooth  estim nu}, we plot the distribution of $\left(\mathbb E\left[\nu_{k} | (N^{sim}_t)_{t \in [0,T]}\right]\right)_{sim=1\dots 25}$ for $T=5,10,20$ seconds and clearly observe a decrease of the biais and the variance as the length of the observation period increases. Some estimation of the interaction functions   are given in  Figure \ref{fig: M=2 smooth post h}.  Due to the choice of the prior distribution of these quantities, we get a sparse posterior inference.

\begin{figure}
\centering
\includegraphics[width=\textwidth,height=0.5\textheight]{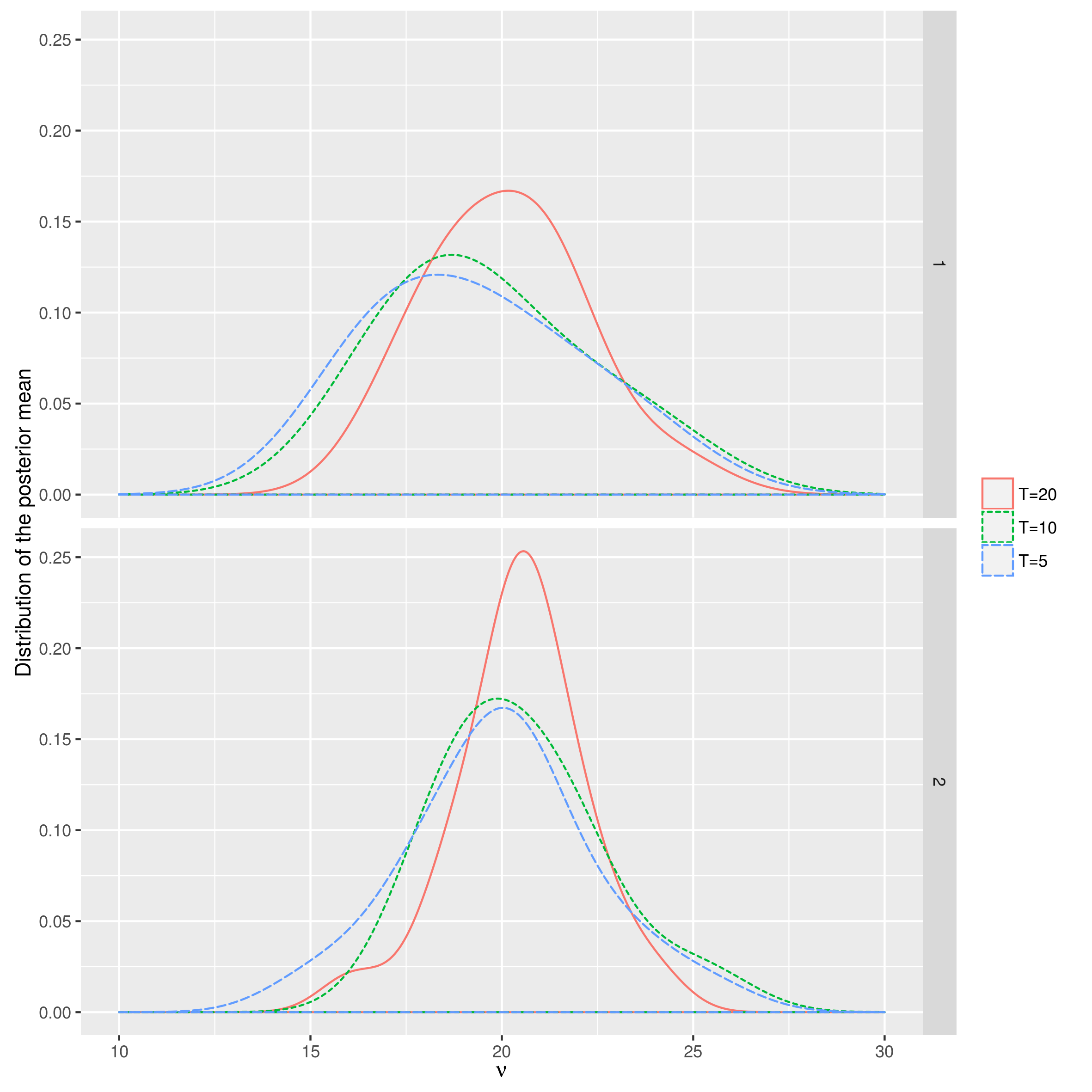}
\caption{\textbf{Results for scenario 3 : smooth interaction functions}: Distributions of $\left(\mathbb E\left[\nu_{k} | (N^{sim}_t)_{t \in [0,T]}\right]\right)_{sim=1\dots 25}$ for $T=5,10,20$ seconds (long dashed, short dashed and plain line respectively).}
\label{fig: M=2 smooth  estim nu}
\end{figure}%
\begin{figure}
\centering
\includegraphics[width=\textwidth,height=0.5\textheight]{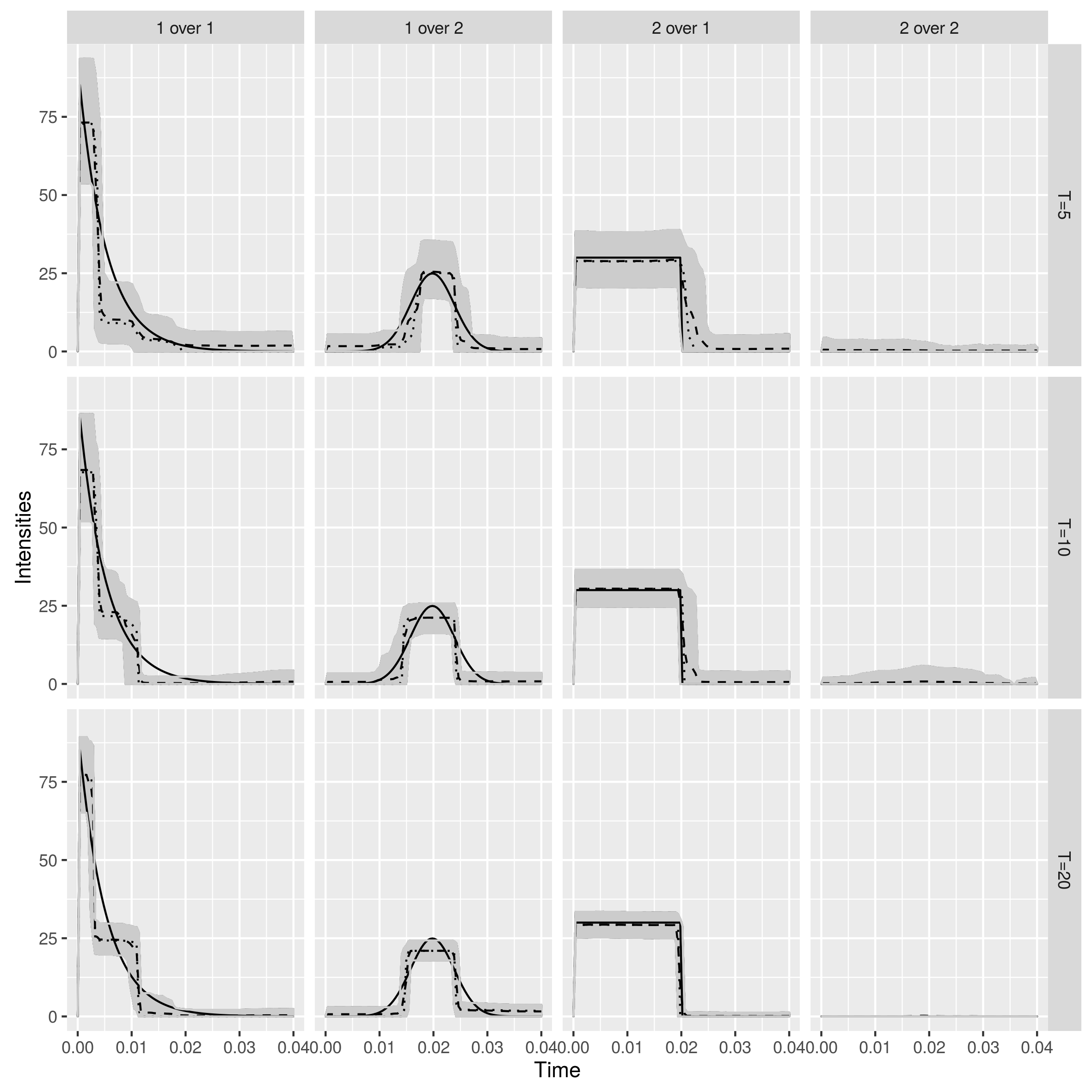}
\caption{\textbf{Results for scenario 3 : smooth interaction functions}: Estimation of the  interaction functions $(h_{k, \ell})_{k, \ell=1,2}$ using the regular prior for $T=10$ (upper panel) and $T=10$ (bottom). The gray region indicates the credible region for $h_{k, \ell}(t)$ (delimited by the $5\%$ and $95\%$ percentiles of the posterior distribution).  The true  $h_{k, \ell}$ is in plain line, the posterior expectation and posterior median  for  $h_{k, \ell}(t)$ are in dotted and dashed lines  respectively (often undistinguishable).}
\label{fig: M=2 smooth post h}
\end{figure}

\label{sec:numeric}
%%%%%%%%%%%%%%%%%%%%%%%%%%
%%%%%%%%%%%%%%%%%%%%%%%%%%
\section{Proofs of Theorems}
%In the sequel, we denote $\square$ a constant (that may depend on $K$) whose value may change from line to line. When it also depends on $f_0$, we denote $\square(f_0)$.
%%%%%%%%%%%%%%%%%%%%%%%%%%%
\subsection{Proof of Theorem~\ref{th:d1}}\label{sec:prthd1}
To prove Theorem~\ref{th:d1}, we apply the general methodology of \citet{ghosal:vdv:07}, with modifications due to the fact that $\exp(L_T(f))$ is the likelihood of the distribution of $(N^k)_{k=1,\ldots,K}$ on $[0,T]$ conditional on $\mathcal G_{0^-}$ and that the metric $d_{1,T} $ depends on the observations. We set $M_T=M\sqrt{\log\log T}$, for $M$ a positive constant.
%We have:
%$$L_T(f)=\sum_{k=1}^K \left(\int_0^T\log\left(\lambda_t^k(f)\right)dN^k_t
%-\int_0^T\lambda_t^k(f)dt\right).$$
Let  $$A_\epsilon= \{ f\in{\mathcal F} ; \ d_{1,T}(f_0,f) \leq K\epsilon\}$$ 
and  for $j\geq 1$, we set
\begin{equation}\label{Sj} 
S_j  =\left\{ f \in  \mathcal F_T; \ d_{1,T} (f, f_0) \in  (Kj \epsilon_T, K(j+1) \epsilon_T] \right\},
\end{equation} 
where $\mathcal F_T= \left\{f=((\nu_k)_{k}, (h_{k,\ell})_{k,\ell})\in \mathcal F ; \ (h_{k,\ell})_{k,\ell})\in\mathcal H_T\right\}$.
So that, for any test function $\phi$, 
\begin{equation*}
\begin{split}
 \Pi\left( A_{M_T\epsilon_T}^c |N \right) &=  \frac{ \int_{A_{M_T\epsilon_T}^c} e^{L_T(f) - L_T(f_0)}d\Pi(f) }{ \int_{\mathcal F} e^{L_T(f) - L_T(f_0)}d\Pi(f) }=: \frac{\Num_T }{ D_T}  \\
 &\leq \1_{\Omega_T^c} +\1_{\left\{D_T<  \frac{\Pi( B(\epsilon_T,T))}{\exp(2(\kappa_T+1)T\epsilon_T^2)}\right\}}+ \phi\1_{\Omega_T}+\frac{e^{2(\kappa_T+1)T\epsilon_T^2}}{ \Pi( B(\epsilon_T,T))} \int_{{\mathcal F}_T^c} e^{L_T(f)-L_T(f_0)}d\Pi(f) \\&+ \1_{\Omega_T} \frac{e^{2(\kappa_T+1)T\epsilon_T^2}}{ \Pi( B(\epsilon_T,T))} \sum_{j=M_T}^\infty \int_{\mathcal F_T}\1_{f \in S_j} e^{L_T(f)-L_T(f_0)}(1-\phi)d\Pi(f)
\end{split}
\end{equation*}
and
\begin{equation*}
\begin{split}
\E_0 \left[\Pi\left( A_{M_T\epsilon_T}^c | N \right)\right]&\leq \P_0 (\Omega_T^c) + \P_0\left( D_T< e^{-2(\kappa_T+1)T\epsilon_T^2} \Pi(B(\epsilon_T,B)) \right) + \E_0[\phi\1_{\Omega_T}]\\
& \quad +\frac{e^{2(\kappa_T+1)T\epsilon_T^2} }{ \Pi( B(\epsilon_T,B))} \left( \Pi(\mathcal F_T^c) + \sum_{j=M_T}^{\infty} \int_{\mathcal F_T} \E_0\left[ \E_f\left[ \1_{\Omega_T } \1_{f \in S_{j}}(1-\phi) | \mathcal G_{0^-} \right] \right]d\Pi(f) \right),
\end{split}
\end{equation*}
since
$$\E_0\left[\int_{{\mathcal F}_T^c} e^{L_T(f)-L_T(f_0)}d\Pi(f) \right]=\E_0\left[\E_0\left[\int_{{\mathcal F}_T^c} e^{L_T(f)-L_T(f_0)}d\Pi(f)| \mathcal G_{0^-}\right] \right]= \E_0\left[\E_f\left[\int_{{\mathcal F}_T^c} d\Pi(f)| \mathcal G_{0^-}\right] \right]=\Pi({\mathcal F}_T^c).$$
Since $e^{(\kappa_T +1)T\epsilon_T^2}e^{L_T(f) - L_T(f_0)}\geq\1_{\left\{L_T(f) - L_T(f_0) \geq -(\kappa_T +1)T\epsilon_T^2\right\}},$
\begin{equation*}
\begin{split}
\P_0\left( D_T \leq e^{-2(\kappa_T +1)T\epsilon_T^2 }\Pi( B(\epsilon_T , B)) \right) &\leq 
\P_0\left(\int_{B(\epsilon_T , B)} e^{L_T(f) - L_T(f_0) } \frac{ d\Pi(f)}{\Pi( B(\epsilon_T , B))} \leq e^{-2(\kappa_T +1)T\epsilon_T^2 }\right)\\
&\leq 
\P_0\left(\int_{B(\epsilon_T , B)} \1_{\left\{L_T(f) - L_T(f_0)\geq -(\kappa_T +1)T\epsilon_T^2\right\}}\frac{ d\Pi(f)}{\Pi( B(\epsilon_T , B))} \leq e^{-(\kappa_T +1)T\epsilon_T^2 }\right)\\
&\leq \frac{\E_0\left[\int_{B(\epsilon_T , B)} \1_{\left\{L_T(f) - L_T(f_0)< -(\kappa_T +1)T\epsilon_T^2\right\}}\frac{ d\Pi(f)}{\Pi( B(\epsilon_T , B))}\right]}{\left(1-e^{-(\kappa_T +1)T\epsilon_T^2 }\right)}\\
&\leq \frac{ \int_{B(\epsilon_T,B) } \P_0\left( L_T(f_0) - L_T(f) > (\kappa_T+1)T\epsilon_T^2 \right) d\Pi(f) }{ \Pi( B(\epsilon_T,B))\left(1-e^{-(\kappa_T +1)T\epsilon_T^2 }\right)  }\\&\lesssim \frac{\log\log(T)\log^3 (T) }{ T\epsilon_T^2 },
\end{split}
\end{equation*}
by using Lemma \ref{lemma:Kullback} of Section~\ref{sec:denominator}.
Remember we have set $\rho_{k,\ell}^0:=\|h_{k,\ell}^0\|_1$ and $\rho_{k,\ell}:=\|h_{k,\ell}\|_1$. Since $h_{k,\ell}$ and $h^0_{k,\ell}$ are non-negative functions,
$\int_{-s}^Ah_{k,\ell}^0(u)du\leq  \rho_{k,\ell}^0,\quad \int_{0}^{T-s} h_{k,\ell}^0(u)du \leq  \rho_{k,\ell}^0,$ and note that 
\begin{eqnarray*}
 T d_{1,T} ( f, f_0)  &=& \sum_{\ell = 1}^K \int_0^T \left| \nu_\ell - \nu_\ell^0 + \sum_{k=1}^K \int_{t-A}^{t^-} (h_{k,\ell} - h_{k,\ell}^0 )(t-s) dN_{s}^{k} \right| dt\\
 &\geq& \sum_{\ell = 1}^K  \left| \int_0^T\left(\nu_\ell - \nu_\ell^0 + \sum_{k=1}^K \int_{t-A}^{t^-} (h_{k,\ell} - h_{k,\ell}^0 )(t-s) dN_{s}^{k}\right) dt\right| \\
 &\geq&\sum_{\ell = 1}^K \left|T(\nu_\ell - \nu_\ell^0)+ \int_0^T\left(\sum_{k=1}^K \int_{t-A}^{t^-} (h_{k,\ell} - h_{k,\ell}^0 )(t-s) dN_{s}^{k}\right) dt\right|,
 \end{eqnarray*}
%Then  for all $\ell \leq d$,  by setting
%$$\rho_{k,\ell}:=\int_{0^+}^Ah_{k,\ell}(u)du=\|h_{k,\ell}\|_1,\quad \rho_{k,\ell}^0:=\int_{0^+}^Ah_{k,\ell}^0(u)du=\|h_{k,\ell}^0\|_1,$$
then for any $\ell=1,\ldots,K$, 
\begin{eqnarray*}
d_{1,T} ( f, f_0) &\geq& \left|\nu_\ell - \nu_\ell^0 + \frac{1}{T}\sum_{k=1}^K\int_0^T  \int_{t-A}^{t^-} (h_{k,\ell} - h_{k,\ell}^0 )(t-s) dN_{s}^{k} dt \right| \\
 &=& \left|\nu_\ell - \nu_\ell^0 + \sum_{k=1}^K  (\rho_{k,\ell} - \rho_{k,\ell}^0)\frac{N^k[0, T-A]}{T}\right.\\ &&  \left. +     \frac{1}{T}\int_{-A}^{0} \int_{-s}^A(h_{k,\ell} - h_{k,\ell}^0 )(u)du dN_{s}^{k} +   \frac{1}{T}\int_{T-A}^{T^-} \int_{0}^{T-s} (h_{k,\ell} - h_{k,\ell}^0 )(u)dudN_{s}^{k}\right|\\
 &=& \left|\nu_\ell+\sum_{k=1}^K  \rho_{k,\ell}\frac{N^k[0, T-A]}{T}+     \frac{1}{T}\int_{-A}^{0} \int_{-s}^Ah_{k,\ell}(u)du dN_{s}^{k} +   \frac{1}{T}\int_{T-A}^{T^-} \int_{0}^{T-s} h_{k,\ell}(u)du dN_{s}^{k}
 \right.\\
 &&-\left.\left(\nu_\ell^0+\sum_{k=1}^K  \rho_{k,\ell}^0\frac{N^k[0, T-A]}{T}+     \frac{1}{T}\int_{-A}^{0} \int_{-s}^Ah_{k,\ell}^0(u)du dN_{s}^{k} +   \frac{1}{T}\int_{T-A}^{T^-} \int_{0}^{T-s} h_{k,\ell}^0(u)du dN_{s}^{k}\right)
 \right|.
\end{eqnarray*}
This  implies for $f\in   S_j $ that
\begin{equation} \label{ineq:nu-rho}
\begin{split}
& \nu_\ell +  \sum_{k=1}^K  \rho_{k,\ell} \frac{N^k[0, T-A]}{T}   \leq  \nu_\ell^0 + \sum_{k=1}^K   \rho_{k,\ell}^0 \frac{ N^k[-A, T]}{ T } +K(j+1)\epsilon_T  \\
& \nu_\ell +  \sum_{k=1}^K  \rho_{k,\ell} \frac{N^k[-A, T]}{T}   \geq  \nu_\ell^0 + \sum_{k=1}^K   \rho_{k,\ell}^0 \frac{ N^k[0, T-A]}{ T }  - K(j+1)\epsilon_T .
\end{split}
\end{equation}
On $\Omega_T$,
$$\sum_{k=1}^K   \rho_{k,\ell}^0 \frac{ N^k[-A, T]}{ T } \leq \sum_{k=1}^K   \rho_{k,\ell}^0 (\mu_k^0+\delta_T),$$
 so that, for $T$ large enough,  for all $j \geq 1$ $S_j \subset \mathcal F_j$ with
$$\mathcal F_j := \{ f \in \mathcal F_T; \ \nu_\ell \leq \mu_\ell^0 +1+ Kj \epsilon_T, \forall \ell \leq K\},$$ 
since
\begin{equation}\label{munu}
\mu_\ell^0 = \nu_\ell^0+  \sum_{k=1}^K   \rho_{k,\ell}^0 \mu_k^0.
\end{equation}
Let $(f_i)_{i=1,\ldots, {\mathcal N}_j}$ be the centering points of a minimal $\L_1$-covering of  $\mathcal F_j$ by balls of radius $\zeta j \epsilon_T$ with $\zeta  = 1/(6N_0)$ (with $N_0$ defined in Section~\ref{sec:main}) and define $\phi_{(j)} = \max_{i=1,\ldots,{\mathcal N}_j} \phi_{f_i,j} $ where $\phi_{f_i,j} $ is the individual test defined in Lemma~\ref{lem:test:d1} associated to $f_i$ and $j$  (see Section~\ref{sec:tests}). Note also that there exists a constant $C_0$ such that 
$${\mathcal N}_j  \leq \left( C_0(1+ j\epsilon_T)/  j\epsilon_T\right)^{K} {\mathcal N}(\zeta j \epsilon_T/2, \mathcal H_T, \| . \|_1)$$
where ${\mathcal N}(\zeta j \epsilon_T/2, \mathcal H_T, \| . \|_1)$ is the covering number of $\mathcal H_T$ by $\L_1$-balls with radius $\zeta j \epsilon_T/2$. There exists $C_K$ such that  if $j\epsilon_T \leq 1$ then    ${\mathcal N}_j \leq C_K e^{- K \log (j \epsilon_T)}{\mathcal N}(\zeta j \epsilon_T/2, \mathcal H_T, \| . \|_1)$ and if $j\epsilon_T>1$ then ${\mathcal N}_j \leq C_K N(\zeta j \epsilon_T/2, \mathcal H_T, \| . \|_1)$. Moreover $j\mapsto {\mathcal N}(\zeta j \epsilon_T/2, \mathcal H_T, \| . \|_1)$ is monotone non-increasing, choosing $j \geq 2\zeta_0/\zeta$, we obtain that 
$${\mathcal N}_j\leq C_K (\zeta/\zeta_0)^Ke^{ K \log T}e^{x_0 T\epsilon_T^2},$$ 
from hypothesis (iii) in Theorem \ref{th:d1}. Combining this with Lemma~\ref{lem:test:d1}, we have for all $j\geq 2\zeta_0/\zeta$,
\begin{equation*}
\begin{split}
\E_0[\1_{\Omega_T}\phi_{(j)}] &\lesssim {\mathcal N}_j e^{-T x_2 (j\epsilon_T \wedge j^2 \epsilon_T^2)} \lesssim e^{ K \log T}e^{x_0 T\epsilon_T^2} e^{- x_2T (j\epsilon_T \wedge j^2 \epsilon_T^2)}\\
\sup_{f \in \mathcal F_j} \mathbb E_{0} \left[\E_f[\1_{\Omega_T} \1_{f\in S_j} (1-\phi_{(j)}) | \mathcal G_{0^-}]\right] &\lesssim e^{-x_2T (j\epsilon_T \wedge j^2 \epsilon_T^2)},
\end{split}
\end{equation*}
for $x_2$ a constant. Set $\phi = \max_{j\geq M_T}\phi_{(j)}$ with $M_T > 2\zeta_0/\zeta$,  then 
$$\E_0[\1_{\Omega_T}\phi] \lesssim e^{ K \log T}e^{x_0 T\epsilon_T^2} \left[\sum_{j=M_T}^{\lfloor \epsilon_T^{-1} \rfloor}  e^{-x_2T\epsilon_T^2 j^2} + \sum_{j\geq \epsilon_T^{-1} }e^{- T x_2\epsilon_Tj }  \right]\lesssim e^{- x_2 T \epsilon_T^2 M_T^2/2 }$$
and 
$$\sum_{j=M_T}^\infty \int_{\mathcal F_T} \E_0\left[ \E_f\left[ \1_{\Omega_T }\1_{ f\in S_j} (1-\phi) |\mathcal G_{0^-} \right]\right]d\Pi(f) \lesssim e^{- x_2 T \epsilon_T^2 M_T^2 /2}.$$
Therefore,
$$\frac{e^{2(\kappa_T+1)T\epsilon_T^2} }{ \Pi( B(\epsilon_T,B))}\sum_{j=M_T}^\infty \int_{\mathcal F_T} \E_0\left[ \E_f\left[ \1_{\Omega_T }\1_{ f\in S_j} (1-\phi) |\mathcal G_{0^-} \right]\right]d\Pi(f) =o(1)$$ if $M$ is a constant large enough, which terminates the proof of Theorem \ref{th:d1}.
%%%%%%%%%%%%%%%%%%%%%%%%
\subsection{Proof of Theorem \ref{th:slices}}
The proof of Theorem \ref{th:slices} follows the same lines  as for Theorem \ref{th:d1}, except that the decomposition of $\mathcal F_T$ is based on the sets $ \mathcal F_j$ and $\mathcal H_{T, i} $, $i\geq 1$ and $j\geq M_T$ for some $M_T>0$. For each $i\geq 1$, $j \geq M_T$, consider $S_{i,j}'$ a maximal set of  $\zeta j \epsilon_T$-separated points in $\mathcal F_j \cap \mathcal H_{T,i}$ (with a slight abuse of notations) and $\phi_{i,j} = \max_{f_1\in S_{i,j}'} \phi_{f_1}$ with $\phi_{f_1}$ defined in Lemma \ref{lem:test:d1}. Then, $$|S_{i,j}'| \leq C_K (\zeta/\zeta_0)^Ke^{K \log (T)}{\mathcal N}(\zeta j\epsilon_T/2, \mathcal H_{T,i}, \|. \|_1).$$ 
Setting $\Num_{T,ij}:=\int_{\mathcal F_T\cap \mathcal H_{T,i}}\1_{f\in S_j} e^{L_T(f) - L_T(f_0)}d\Pi(f)$, using similar computations as for the proof of Theorem~\ref{th:d1}, we have:
\begin{equation*}
\begin{split}
\E_0 \left[\Pi\left( A_{M_T\epsilon_T}^c | N \right)\right]&\leq \P_0 (\Omega_T^c) + \P_0\left( D_T< e^{-2(\kappa_T+1)T\epsilon_T^2} \Pi(B(\epsilon_T,B)) \right) +\frac{e^{2(\kappa_T+1)T\epsilon_T^2} }{ \Pi( B(\epsilon_T,B))} \Pi(\mathcal F_T^c) \\
&\hspace{-1cm} +\E_0\left[\1_{\Omega_T}\sum_{i=1}^{+\infty}\sum_{j=M_T}^{+\infty}\phi_{ij}\frac{\Num_{T,ij}}{D_T}\right]+\frac{e^{2(\kappa_T+1)T\epsilon_T^2} }{ \Pi( B(\epsilon_T,B))}\E_0\left[\1_{\Omega_T}\sum_{i=1}^{+\infty}\sum_{j=M_T}^{+\infty}(1-\phi_{ij})\Num_{T,ij}\right].
\end{split}
\end{equation*}
Assumptions of the theorem allow us to deal with the first three terms. So, we just have to bound the last two ones. Using the same arguments and the same notations as for Theorem~\ref{th:d1},
\begin{eqnarray*}
\E_0\left[\1_{\Omega_T}\sum_{i=1}^{+\infty}\sum_{j=M_T}^{+\infty}(1-\phi_{ij})\Num_{T,ij}\right]&=&\sum_{i=1}^{+\infty}\int_{\mathcal F_T\cap \mathcal H_{T,i}}\sum_{j=M_T}^{+\infty}\E_0\left[\1_{\Omega_T}\1_{f\in S_j} (1-\phi_{ij})e^{L_T(f) - L_T(f_0)}\right]d\Pi(f)\\
&=&\sum_{i=1}^{+\infty}\int_{\mathcal F_T\cap \mathcal H_{T,i}}\sum_{j=M_T}^{+\infty} \mathbb E_{0} \left[\E_f[\1_{\Omega_T} \1_{f\in S_j} (1-\phi_{ij}) | \mathcal G_{0^-}]\right] d\Pi(f)\\
&\lesssim&\sum_{i=1}^{+\infty}\int_{\mathcal F_T\cap \mathcal H_{T,i}}d\Pi(f)\sum_{j=M_T}^{+\infty}  e^{-x_2T (j\epsilon_T \wedge j^2 \epsilon_T^2)} \lesssim e^{- x_2 T \epsilon_T^2 M_T^2 /2}.
\end{eqnarray*}
Now, for $\gamma$ a fixed positive constant smaller than $x_2$, setting $\pi_{T,i}=\Pi(\mathcal H_{T,i})$, we have
\begin{small}
\begin{equation*}
\begin{split}
& \E_0\left[\1_{\Omega_T}\sum_{i=1}^{+\infty}  \sum_{j=M_T}^{+\infty}\phi_{ij}\frac{\Num_{T,ij}}{D_T}\right] \leq \P_0\left( D_T < e^{-2(\kappa_T+1)T\epsilon_T^2} \Pi( B(\epsilon_T,B)) \right) +  \P_0\left(\exists (i,j) ; \sqrt{\pi_{T,i}}\phi_{i,j}>e^{-\gamma T (j\epsilon_T \wedge j^2 \epsilon_T^2)}\cap\Omega_T\right)\\&
\qquad +\sum_{i=1}^{+\infty}\sum_{j=M_T}^{+\infty}e^{-\gamma T (j\epsilon_T \wedge j^2 \epsilon_T^2)}\sqrt{\pi_{T,i}} \frac{e^{2(\kappa_T+1)T\epsilon_T^2} }{ \Pi( B(\epsilon_T,B))} \E_0\left[\1_{\Omega_T}\int_{\mathcal F_T}\1_{f\in S_j} e^{L_T(f) - L_T(f_0)}d\Pi(f|\mathcal H_{T,i}) \right].
\end{split}
\end{equation*}
\end{small}
Now,
\begin{align*}
 \P_0\left(\exists (i,j) ; \sqrt{\pi_{T,i}}\phi_{i,j}>e^{-\gamma T (j\epsilon_T \wedge j^2 \epsilon_T^2)}\cap\Omega_T\right)&\leq \sum_{i=1}^{+\infty}\sqrt{\pi_{T,i}}\sum_{j=M_T}^{+\infty}e^{\gamma T (j\epsilon_T \wedge j^2 \epsilon_T^2)}\E_0[\1_{\Omega_T}\phi_{i,j}]\\
 &\lesssim \sum_{i=1}^{+\infty}\sqrt{\pi_{T,i}}\sum_{j=M_T}^{+\infty}e^{(\gamma-x_2) T (j\epsilon_T \wedge j^2 \epsilon_T^2)+K \log (T)}{\mathcal N}(\zeta j\epsilon_T/2, \mathcal H_{T,i}, \|. \|_1)\\
 &\lesssim e^{(\gamma- x_2) T \epsilon_T^2 M_T^2 /2}\sum_{i=1}^{+\infty}\sqrt{\pi_{T,i}}{\mathcal N}(\zeta_0 \e_T, \mathcal H_{T,i}, \|. \|_1)=o(1).
\end{align*}
 But, we have
\begin{eqnarray*}
\E_0\left[\1_{\Omega_T}\int_{\mathcal F_T}\1_{f\in S_j} e^{L_T(f) - L_T(f_0)}d\Pi(f|\mathcal H_{T,i})  \right] 
&\leq&1
\end{eqnarray*}
and
\begin{eqnarray*}
\E_0\left[\1_{\Omega_T}\sum_{i=1}^{+\infty}\sum_{j=M_T}^{+\infty}\phi_{ij}\frac{\Num_{T,ij}}{D_T}\right]
&\lesssim&\sum_{i=1}^{+\infty}\sqrt{\pi_{T,i}}e^{- \gamma T \epsilon_T^2 M_T^2 }\frac{e^{2(\kappa_T+1)T\epsilon_T^2} }{ \Pi( B(\epsilon_T,B))}+o(1)=o(1),\end{eqnarray*}
for $M$ a contant large enough. This terminates the proof of Theorem \ref{th:slices}.
%%%%%%%%
\subsection{Construction of tests}\label{sec:tests}
As usual, the control of the posterior distributions is based on specific tests. We build them in the following lemma.
\begin{lemma}\label{lem:test:d1}
Let $j\geq 1$, $ f_1\in \mathcal F_j$ and define the test 
$$ \phi_{f_1,j} = \max_{\ell=1,\ldots,K}\left(\1_{\{ N^\ell(A_{1,\ell}) - \Lambda^\ell(A_{1,\ell}; f_0) \geq j T\epsilon_T/8 \}} \vee  \1_{\{ N^\ell(A_{1,\ell}^c) - \Lambda^\ell(A_{1,\ell}^c; f_0) \geq  j T\epsilon_T/8 \}} \right),$$
%Let $N_0 \geq 1 $ and define  $$\Omega_T  = \{ \sum_\ell N^\ell[-A,T] \leq N_0 T \},$$ 
with  for all $\ell \leq K$, $A_{1,\ell} = \{ t \in [0,T]; \, \lambda_t^\ell(f_1) \geq \lambda_t^\ell(f_0) \}$, $ \Lambda^\ell(A_{1,\ell}; f_0)=\int_0^T\1_{A_{1,\ell}}(t)\lambda_t^\ell(f_0)dt$ and $ \Lambda^\ell(A_{1,\ell}^c; f_0)=\int_0^T\1_{A_{1,\ell}^c}(t)\lambda_t^\ell(f_0)dt$. Then
\begin{equation*}
\E_0\left[\1_{\Omega_T} \phi_{f_1,j} \right] + \sup_{\|f - f_1\|_1 \leq  j\epsilon_T  / (6N_0)}\mathbb E_0 \left[\E_f\left[\1_{\Omega_T} \1_{f\in S_j}(1- \phi_{f_1,j})  | \mathcal G_{0^-}\right]\right] \leq (2K+1) \max_\ell e^{-  x_{1,\ell} T j\epsilon_T(\sqrt{\mu_\ell^0 }  \wedge j \epsilon_T)},
\end{equation*} 
with  $N_0$ is defined in Section~\ref{sec:main} and $$x_{1,\ell}  =\min \left(36, 1/(4096\mu^0_\ell),1/\left(1024K\sqrt{\mu_\ell^0}\right)\right).$$ 
\end{lemma}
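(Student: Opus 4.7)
The plan is to build $\phi_{f_1,j}$ as a union of one-sided tests on the martingale increments $N^\ell(A_{1,\ell}) - \Lambda^\ell(A_{1,\ell};f_0)$ and $N^\ell(A_{1,\ell}^c) - \Lambda^\ell(A_{1,\ell}^c;f_0)$, and then control both error types via a Bernstein-type deviation inequality for counting-process martingales (as available from Reynaud-Bouret and co-authors).

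The first step is a pigeonhole decomposition. Since $f_1\in S_j$, one has
$\sum_\ell \bigl[p_\ell + n_\ell\bigr] = T d_{1,T}(f_1,f_0) > K T j \epsilon_T$, where $p_\ell = \int_{A_{1,\ell}}(\lambda^\ell_t(f_1)-\lambda^\ell_t(f_0))dt$ and $n_\ell = \int_{A_{1,\ell}^c}(\lambda^\ell_t(f_0)-\lambda^\ell_t(f_1))dt$. Hence there exists $\ell^\star$ (depending on $f_1$) such that either $p_{\ell^\star} \geq T j \epsilon_T/2$ or $n_{\ell^\star} \geq T j \epsilon_T /2$. This identifies a coordinate on which $f_1$ and $f_0$ are visibly separated through their compensators on a predictable set.

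For the type I error I would bound, for each $\ell$, the $\mathbb P_0$-probability of $M^\ell_0(A_{1,\ell}) := N^\ell(A_{1,\ell})-\Lambda^\ell(A_{1,\ell};f_0) \geq j T \epsilon_T/8$. Since $M^\ell_0(A_{1,\ell})$ is a purely discontinuous martingale with jumps of size $1$ and predictable quadratic variation $\Lambda^\ell(A_{1,\ell};f_0)\leq \int_0^T \lambda^\ell_t(f_0)\,dt$, a Bernstein inequality for such martingales gives, on an event where this compensator is bounded by a multiple of $T\mu^0_\ell$ (controlled using $\Omega_T$ together with the martingale $N^\ell[0,T]-\int_0^T \lambda^\ell_t(f_0)dt$), a deviation bound of the form $\exp\!\bigl(-c\, T j\epsilon_T (\sqrt{\mu^0_\ell}\wedge j\epsilon_T)\bigr)$. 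The $\sqrt{\mu^0_\ell}\wedge j\epsilon_T$ factor is precisely the two regimes of Bernstein/Bennett: the quadratic regime contributes $j^2\epsilon_T^2/\mu^0_\ell$ and the sub-exponential regime contributes the threshold $\sqrt{\mu^0_\ell}$. The same argument applied to $N^\ell(A_{1,\ell}^c)-\Lambda^\ell(A_{1,\ell}^c;f_0)$ and a union bound over the $2K$ indicator terms produces the $2K\max_\ell e^{-x_{1,\ell}\ldots}$ part.

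For the type II error I use $\|f-f_1\|_1 \leq j\epsilon_T/(6N_0)$ together with the bound
\[
\int_0^T |\lambda^\ell_t(f)-\lambda^\ell_t(f_1)|\,dt \;\leq\; T|\nu^f_\ell-\nu^{f_1}_\ell| + \sum_k \|h^f_{k,\ell}-h^{f_1}_{k,\ell}\|_1\, N^k[-A,T] \;\leq\; N_0 T \|f-f_1\|_1 \;\leq\; T j\epsilon_T/6
\]
on $\Omega_T$ (thanks to the definition of $N_0$). Combined with step 1, this yields
$|\Lambda^{\ell^\star}(A_{1,\ell^\star};f)-\Lambda^{\ell^\star}(A_{1,\ell^\star};f_0)| \geq Tj\epsilon_T/2 - Tj\epsilon_T/6 = Tj\epsilon_T/3$ on the relevant side. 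Hence if the test accepts, the $\mathbb P_f$-martingale $N^{\ell^\star}(A_{1,\ell^\star})-\Lambda^{\ell^\star}(A_{1,\ell^\star};f)$ must deviate by at least $Tj\epsilon_T/8 - Tj\epsilon_T/3$ in absolute value. Applying the same Bernstein inequality under $\mathbb P_f$ (with compensator bounded on $\Omega_T$ by a quantity comparable to $T\mu^0_{\ell^\star}$, using $\|f-f_1\|_1$ smallness and the fact that $f_1\in\mathcal F_j$ forces $\nu^f$ and $\rho^f$ to be comparable to $\nu^0$ and $\rho^0$) delivers the same exponential bound, conditional on $\mathcal G_{0^-}$, and integrating yields the stated inequality.

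The main obstacle is the careful bookkeeping of the quadratic variation of the counting martingales, since the compensators $\Lambda^\ell(\cdot;f_0)$ and $\Lambda^\ell(\cdot;f)$ are themselves random and must be sandwiched between deterministic multiples of $T\mu^0_\ell$ with overwhelming probability in order to extract both regimes $j^2\epsilon_T^2/\mu^0_\ell$ and $j\epsilon_T\sqrt{\mu^0_\ell}$. This is where the event $\Omega_T$, the definition of $N_0$, and the constraint $f\in\mathcal F_j$ (which prevents $\nu^f_\ell$ from being too large) all enter together; tracking the explicit constants is then a matter of taking $x_{1,\ell}$ small enough to absorb all the numerical factors coming from Bernstein and from the $1/8$, $1/6$, $1/24$ thresholds in steps~1 and~3.
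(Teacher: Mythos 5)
Your sketch follows essentially the same route as the paper's proof: inequality (7.7) of Hansen, Reynaud-Bouret and Rivoirard applied to the counting-process martingales on the predictable sets $A_{1,\ell}$ and $A_{1,\ell}^c$, with the compensators controlled on $\Omega_T$ (yielding $v\lesssim T\mu_\ell^0$ under $\P_0$ and $\tilde v\lesssim T\mu_\ell^0+K(j+1)T\epsilon_T$ under $\P_f$), the two Bernstein regimes producing the factor $\sqrt{\mu_\ell^0}\wedge j\epsilon_T$, and a case split on whether the positive or negative side of $\lambda^\ell(f_1)-\lambda^\ell(f_0)$ carries most of the $\L_1$-mass.

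Two inaccuracies in your Step 1 should be corrected, though neither breaks the argument. First, the lemma assumes $f_1\in\mathcal F_j$, not $f_1\in S_j$, so you cannot write $d_{1,T}(f_1,f_0)>KTj\epsilon_T$ directly; the paper applies the pigeonhole to $f\in S_j$ inside the supremum (for each such $f$ there is $\ell$ with $\|\lambda^\ell(f)-\lambda^\ell(f_0)\|_1\geq jT\epsilon_T$) and only then transfers to $f_1$ via the bound $\|\lambda^\ell(f)-\lambda^\ell(f_1)\|_1\leq TN_0\|f-f_1\|_1$ valid on $\Omega_T$. Second, $S_j$, $d_{1,T}$ and the quantities $p_\ell,n_\ell$ are data-dependent, so the separating index $\ell^\star$ is not a deterministic function of $f_1$ but varies with the sample path; this is precisely why $\phi_{f_1,j}$ is defined as a maximum over all $\ell\leq K$ rather than a single index, and the type-II bound should be organized via an indicator decomposition over $\ell$ (and over the $A_{1,\ell}$ vs.\ $A_{1,\ell}^c$ cases) before applying Bernstein. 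With these adjustments the proof coincides with the paper's, and the numerical choices $1/8$, $1/(6N_0)$, $x_{1,\ell}$ absorb the bookkeeping as you indicate.
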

\begin{proof}[Proof of Lemma~\ref{lem:test:d1}]
Let $j\geq 1$ and $f_1=((\nu_k^1)_{k=1,\ldots,K}, (h_{\ell,k}^1)_{k,\ell=1,\ldots,K}) \in \mathcal F_j$. Let $\ell\in\{1,\ldots,K\}$ and let 
$$\phi_{j,A_{1,\ell}}  = \1_{\left\{N^\ell(A_{1,\ell}) - \Lambda^\ell(A_{1,\ell}; f_0) \geq  jT\epsilon_T/8\right\}}.$$ 
By using %Lemmas~\ref{control} and \ref{lem:N0T} and 
\eqref{munu}, observe that on the event $\Omega_T $,  
\begin{equation*}
\begin{split}
\int_0^T \lambda^{\ell}_s(f_0)ds &=\nu_\ell^0 T + \sum_{k=1}^K \int_0^T \int_{s-A}^{s^-} h_{k,\ell}^0(s-u)dN_u^kds \nonumber\\
& \leq \nu_{\ell}^0 T + \sum_{k=1}^K  \int_{-A}^{T^-} \int_0^T \1_{u<s\leq A+u} h_{k,\ell}^0(s-u)ds dN_u^k
\end{split}
\end{equation*}
and for $T$ large enough,
\begin{equation}\label{lambda0}
\int_0^T \lambda_s^\ell(f_0) ds \leq  \nu_\ell^0 T + \sum_{k=1}^K \rho_{k,\ell}^0 N^k[-A,T]\leq 2 T \mu_\ell^0. 
\end{equation}
Let  $j\leq \sqrt{\mu_\ell^0 } \epsilon_T^{-1}$ and $x = x_1j^2T \epsilon_T^2$, for $x_1$ a constant. We use inequality (7.7) of \cite{HRR}, with $\tau=T$, $H_t= 1_{A_{1,\ell}}(t)$,  $v=2 T \mu_\ell^0$ and
$M_T=N^\ell(A_{1,\ell}) - \Lambda^\ell(A_{1,\ell}; f_0).$ So,
\begin{equation*}
 \P_0\left( \left\{ N^\ell(A_{1,\ell}) - \Lambda^\ell(A_{1,\ell}; f_0) \geq \sqrt{2vx}+\frac{x}{3}\right\} \cap \Omega_T\right) \leq e^{- x_1 j^2 T \epsilon_T^2 }.
\end{equation*}
If $x_1\leq 1/(1024\mu_\ell^0)$ and $x_1\leq 36$, we have that
\begin{equation}\label{x1}
\sqrt{2vx}+\frac{x}{3}= 2\sqrt{\mu_\ell^0  x_1} jT \epsilon_T+ \frac{x_1 j^2 T\epsilon_T^2}{3} \leq 2\sqrt{\mu_\ell^0 x_1}\left(1+ \frac{\sqrt{x_1}}{6}\right) j T \epsilon_T\leq \frac{jT \epsilon_T}{8}.
\end{equation}
 Then
\begin{equation*}
  \P_0\left( \left\{ N^\ell(A_{1,\ell}) - \Lambda^\ell(A_{1,\ell}; f_0) \geq  \frac{jT\epsilon_T}{8} \right\} \cap \Omega_T\right) \leq e^{- x_1 j^2 T \epsilon_T^2 }.
\end{equation*}
If $j \geq \sqrt{\mu_\ell^0 } \epsilon_T^{-1}$, we apply the same inequality but with $x =x_0 j T \epsilon_T$ with $x_0 =  \sqrt{\mu_\ell^0}\times x_1$. Then,
$$\sqrt{2vx}+\frac{x}{3}= 2\sqrt{\mu_\ell^0 x_1 \sqrt{\mu_\ell^0} j  \epsilon_T} T+ \frac{x_1 \sqrt{\mu_\ell^0} j T \epsilon_T}{3} \leq 2\sqrt{\mu_\ell^0  x_1} jT \epsilon_T+ \frac{x_1 \sqrt{\mu_\ell^0} j T \epsilon_T}{3}\leq \frac{jT \epsilon_T}{8},$$
where we have used \eqref{x1}. It implies
\begin{equation*}
 \P_0\left( \left\{ N^\ell(A_{1,\ell}) - \Lambda^\ell(A_{1,\ell}; f_0) \geq  \frac{jT\epsilon_T}{8} \right\} \cap \Omega_T\right) \leq e^{- x_0  jT \epsilon_T}.
\end{equation*}
Finally $\E_0\left[\1_{\Omega_T} \phi_{j,A_{1,\ell}}\right]\leq e^{-  x_1 T j\epsilon_T(\sqrt{\mu_\ell^0 }  \wedge j \epsilon_T)}.$
%Finally, 
%\begin{equation*}
 %\P_0\left( \left\{ N^\ell(A_1) - \Lambda^\ell(A_1; f_0) \geq     jT\epsilon_T /8 \right\} \cap \Omega_T\right) \leq e^{- x_1 j T \epsilon_T }.
%\end{equation*}
Now, assume that 
$$\int_{A_{1,\ell}}( \lambda_t^\ell(f_1) - \lambda_t^\ell(f_0))dt \geq \int_{A_{1,\ell}^c}( \lambda_t^\ell(f_0) - \lambda_t^\ell(f_1))dt .$$
Then 
\begin{equation}\label{case1:d1}
\frac{\| \lambda^\ell(f_1) - \lambda^\ell(f_0) \|_1 }{ 2}:=\frac{\int_0^T|\lambda^\ell_t(f_1) - \lambda^\ell_t(f_0)|dt}{ 2}\leq \int_{A_{1,\ell}}( \lambda_t^\ell(f_1) - \lambda_t^\ell(f_0))dt.
\end{equation}
Let $f= ((\nu_k)_{k=1,\ldots,K}, (h_{\ell,k})_{k,\ell=1,\ldots,K}) \in S_j$ satisyfing $\|f - f_1\|_1 \leq \zeta j\epsilon_T$ for some $\zeta >0$. Then,
\begin{equation}\label{upbound:d1}
\begin{split}
\| \lambda^\ell(f) - \lambda^\ell(f_1)\|_1 &\leq T|\nu_\ell - \nu_\ell^1| + \int_0^T \left|\int_{t-A}^{t^-}\sum_{k} (h_{k,\ell} - h_{k,\ell}^1)(t-u)dN^k_u\right|dt \\
&\leq 
T |\nu_\ell - \nu_\ell^1| +\sum_k \int_0^T \int_{t-A}^{t^-} |(h_{k,\ell} - h_{k,\ell}^1)(t-u)|dN^k_udt \\
&\leq T|\nu_\ell - \nu_\ell^1| + \max_k N^k[-A,T] \sum_k \|h_{k,\ell} - h_{k,\ell}^1\|_1 \leq T N_0 \|f - f_1\|_1 
\end{split}
\end{equation}
and $\| \lambda^\ell(f) - \lambda^\ell(f_1)\|_1 \leq TN_0\zeta  j\epsilon_T$. Since $f\in S_j$, there exists $\ell$ (depending on $f$) such that
 $$\| \lambda^\ell(f) - \lambda^\ell(f_0)\|_1\geq jT\epsilon_T.$$
 This implies in particular that if $N_0 \zeta < 1$, 
\begin{equation*}
\begin{split}
 \| \lambda^\ell(f_1) - \lambda^\ell(f_0)\|_1 & \geq \| \lambda^\ell(f) - \lambda^\ell(f_0)\|_1 - TN_0\zeta  j\epsilon_T \geq (1-N_0\zeta)Tj \epsilon_T.
 % \| \lambda^\ell(f_1) - \lambda^\ell(f_0)\|_1 & \leq \| \lambda^\ell(f) - \lambda^\ell(f_0)\|_1 + TN_0\zeta  j\epsilon_T \leq 3Tj \epsilon_T/2.
  \end{split}
  \end{equation*}
We then have
\begin{equation*}
\begin{split}
 \Lambda^\ell(A_{1,\ell}; f) -  \Lambda^\ell(A_{1,\ell}; f_0) &=  \Lambda^\ell(A_{1,\ell}; f) -  \Lambda^\ell(A_{1,\ell}; f_1)+  \Lambda^\ell(A_{1,\ell}; f_1) -  \Lambda^\ell(A_{1,\ell}; f_0) \\
 & \geq -\| \lambda^\ell(f) - \lambda^\ell(f_1)\|_1 + \int_{A_{1,\ell}} ( \lambda_t^\ell(f_1) - \lambda_t^\ell(f_0) ) dt  \\
 &\geq - \| \lambda^\ell(f) - \lambda^\ell(f_1)\|_1 + \frac{\| \lambda^\ell(f_1) - \lambda^\ell(f_0) \|_1 }{ 2}\\ 
&\geq -TN_0\zeta  j\epsilon_T+ \frac{(1-N_0\zeta)Tj \epsilon_T}{2}=(1/2-3N_0\zeta/2)Tj \epsilon_T.
\end{split}
\end{equation*}
Taking $\zeta=1/(6N_0)$ leads to
\begin{eqnarray*}
\E_f\left[\1_{f\in S_{j}} (1 - \phi_{j,A_{1,\ell}} ) \1_{\Omega_T} |\mathcal G_{0^-}\right]&=&\E_f\left[ \1_{f\in S_{j}}  \1_{\left\{N^\ell(A_{1,\ell}) - \Lambda^\ell(A_{1,\ell}; f_0) <  jT\epsilon_T/8\right\}} \1_{\Omega_T} | \mathcal G_{0^-} \right]\\
& \leq& \E_f\left[ \1_{f\in S_{j}} \1_{\left\{N^\ell(A_{1,\ell}) - \Lambda^\ell(A_{1,\ell}; f)  \leq -jT \epsilon_T/8 \right\}}\1_{\Omega_T} | \mathcal G_{0^-} \right]\\
& \leq& \E_f\left[\1_{\left\{N^\ell(A_{1,\ell}) - \Lambda^\ell(A_{1,\ell}; f)  \leq -jT \epsilon_T/8 \right\}}\1_{\Omega_T} | \mathcal G_{0^-} \right].
\end{eqnarray*}
Note that we can adapt  inequality (7.7) of \cite{HRR}, with $H_t = \1_{A_{1,\ell}}(t)$ to the case of conditional probability given $\mathcal G_{0^-}$ since the process $E_t$ defined in the proof of Theorem 3 of  \cite{HRR}, being a supermartingale, satisfies $\mathbb E_f[E_t|\mathcal G_{0^-}] \leq E_0 =1$ and, given that 
from \eqref{ineq:nu-rho} and \eqref{lambda0},
$$\int_0^T \lambda^\ell_s(f) ds \leq  \nu_\ell T + \sum_{k=1}^K \rho_{k,\ell} N^k[-A,T]\leq 2 T \mu_\ell^0+K(j+1)T\epsilon_T=:\tilde v$$
for $T$ large enough, we obtain:
\begin{equation*}
 \E_f\left[  \1_{\left\{N^\ell(A_{1,\ell}) - \Lambda^\ell(A_{1,\ell}; f)  \leq -\sqrt{2\tilde vx}-\frac{x}{3} \right\}}\1_{\Omega_T} | \mathcal G_{0^-} \right] \leq e^{- x}.
\end{equation*}
We use the same computations as before, observing that $\tilde v=v+K(j+1)T\epsilon_T$.\\
If  $j\leq \sqrt{\mu_\ell^0 } \epsilon_T^{-1}$ we set $x = x_1j^2T \epsilon_T^2$, for $x_1$ a constant. Then,
\begin{eqnarray*}
\sqrt{2\tilde vx}+\frac{x}{3}&\leq&\sqrt{2 vx}+\frac{x}{3}+\sqrt{2K(j+1)T\e_T x}\\
&\leq&2\sqrt{\mu_\ell^0  x_1} jT \epsilon_T+ \frac{x_1 j^2 T\epsilon_T^2}{3} +\sqrt{2K(j+1) \e_Tx_1}jT\e_T\\\
&\leq&2\sqrt{\mu_\ell^0 x_1}\left(1+ \frac{\sqrt{x_1}}{6}\right) j T \epsilon_T+2\sqrt{Kj\e_T x_1} j T \epsilon_T\\
&\leq&\left(2\sqrt{\mu_\ell^0 x_1}\left(1+ \frac{\sqrt{x_1}}{6}\right) +2\sqrt{K\sqrt{\mu_\ell^0 } x_1} \right)j T \epsilon_T.
\end{eqnarray*}
Therefore, if $x_1\leq \min \left(36, 1/(4096\mu^0_\ell),1/\left(1024K\sqrt{\mu_\ell^0}\right)\right)$, then
$$\sqrt{2\tilde vx}+\frac{x}{3}\leq \frac{jT \epsilon_T}{8}.$$
If  $j\geq \sqrt{\mu_\ell^0 } \epsilon_T^{-1}$, we set $x =x_0 j T \epsilon_T$ with $x_0 =  \sqrt{\mu_\ell^0}\times x_1$. Then,
\begin{eqnarray*}
\sqrt{2\tilde vx}+\frac{x}{3}&\leq&\sqrt{2 vx}+\frac{x}{3}+\sqrt{2K(j+1)T\e_T x}\\
&\leq&2\sqrt{\mu_\ell^0 x_1 \sqrt{\mu_\ell^0} j  \epsilon_T} T+ \frac{x_1 \sqrt{\mu_\ell^0} j T \epsilon_T}{3}+\sqrt{2K(j+1)T\e_T\sqrt{\mu_\ell^0} x_1 j T \epsilon_T}\\
&\leq& 2\sqrt{\mu_\ell^0  x_1} jT \epsilon_T+ \frac{x_1 \sqrt{\mu_\ell^0} j T \epsilon_T}{3}+2\sqrt{K\sqrt{\mu_\ell^0 } x_1}  j T \epsilon_T\leq \frac{jT \epsilon_T}{8}.
\end{eqnarray*}
Therefore, 
\begin{equation*}
  \E_f\left[ \1_{\left\{N^\ell(A_{1,\ell}) - \Lambda^\ell(A_{1,\ell}; f)  \leq -jT \epsilon_T/8 \right\}}\1_{\Omega_T} | \mathcal G_{0^-} \right] \leq e^{-  x_1 T j\epsilon_T(\sqrt{\mu_\ell^0 }  \wedge j \epsilon_T)}.
\end{equation*}
 Now, if $$\int_{A_{1,\ell}}( \lambda_t^\ell(f_1) - \lambda_t^\ell(f_0))dt < \int_{A_{1,\ell}^c}( \lambda_t^\ell(f_0) - \lambda_t^\ell(f_1))dt,$$
then 
$$
\int_{A_{1,\ell}^c}( \lambda_t^\ell(f_1) - \lambda_t^\ell(f_0))dt \geq \frac{\| \lambda^\ell(f_1) - \lambda^\ell(f_0) \|_1 }{ 2}
$$
and the same computations are run with $A_{1,\ell}$ playing the role of $A_{1,\ell}^c$. 
This ends the proof of Lemma~ \ref{lem:test:d1}.
\end{proof}
%%%%%%%%%%%%%%%%%%%%%

\subsection{Control of the denominator}\label{sec:denominator}
The following lemma gives a control of $D_T$.
%\max_{\ell,k}\left\|\frac{h_{\ell,k}}{h^0_{\ell,k}}\right\|_\infty\leq T^B$$
%\textcolor{blue}{Put Lemma:Kullback and the variance in the same lemma}
\begin{lemma}\label{lemma:Kullback}
Let $$KL(f_0,f)=\E_0[L_T(f_0)-L_T(f)].$$
On $B(\e_T, B)$,
\begin{equation}\label{KLcontrol}
0\leq KL(f_0,f)\leq \kappa\log(r_T^{-1}) T\e_T^2,
\end{equation}
for $T$ larger than $T_0$, with $T_0$ some constant depending on $f_0$,
 with
\begin{equation}\label{kappa}
\kappa = 4\sum_{k=1}^K(\nu^0_k)^{-1}\left(3+4K\sum_{\ell=1}^K \left(A\E_0[( \lambda^\ell_0(f_0))^2]+ \E_0[\lambda^\ell_0(f_0)]\right)\right)
\end{equation}
and $r_T$ is defined in \eqref{deltaT}.
\begin{equation}\label{Deno}
\mathbb P_0\left(L_T(f_0) - L_T(f) \geq (\kappa\log(r_T^{-1}) + 1) T\epsilon_T^2  \right)  \leq \frac{ C\log\log(T)\log^3 (T) }{ T\epsilon_T^2 },
\end{equation}
for $C$ a constant only depending on $f_0$ and $B$.
\end{lemma}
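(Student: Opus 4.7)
The starting point is to rewrite $L_T(f_0) - L_T(f)$ using the compensated jump measure $d\tilde N_t^k := dN_t^k - \lambda_t^k(f_0)\,dt$, which under $\P_0$ defines a $(\mathcal G_t)$-local martingale. This produces the decomposition
$$L_T(f_0) - L_T(f) = KL(f_0,f) + M_T, \qquad M_T:=\sum_{k=1}^K \int_0^T \log\!\frac{\lambda_t^k(f_0)}{\lambda_t^k(f)}\,d\tilde N_t^k,$$
with
$$KL(f_0,f) = \sum_{k=1}^K \E_0\!\int_0^T\!\left[\lambda_t^k(f_0)\log\!\frac{\lambda_t^k(f_0)}{\lambda_t^k(f)} - \big(\lambda_t^k(f_0) - \lambda_t^k(f)\big)\right] dt.$$
The lower bound $KL(f_0,f)\geq 0$ is immediate from $a\log(a/b)\geq a-b$ for $a,b>0$.

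For the upper bound on $KL$, the plan is to apply the elementary inequality $u-1-\log u\leq (u-1)^2/u$ (valid for all $u>0$) with $u=\lambda_t^k(f)/\lambda_t^k(f_0)$, which bounds the integrand pointwise by $(\lambda_t^k(f)-\lambda_t^k(f_0))^2/\lambda_t^k(f)\leq 2(\lambda_t^k(f)-\lambda_t^k(f_0))^2/\nu_k^0$, using $\nu_k\geq\nu_k^0/2$ on $B(\epsilon_T,B)$ for $T$ large. It then remains to bound $\E_0\!\int_0^T (\lambda_t^k(f)-\lambda_t^k(f_0))^2\,dt$. Expanding
$$\lambda_t^k(f)-\lambda_t^k(f_0) = (\nu_k-\nu_k^0) + \sum_\ell \int_{t-A}^{t^-} g_{\ell,k}(t-u)\,dN_u^\ell, \qquad g_{\ell,k}=h_{\ell,k}-h_{\ell,k}^0,$$
Cauchy--Schwarz separates the $K+1$ contributions, and each stochastic integral is decomposed as its predictable compensator plus a martingale term controlled by the It\^o isometry for point processes. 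Since $\|g_{\ell,k}\|_2\leq\epsilon_T$, stationarity and finiteness of $\E_0[\lambda_0^\ell(f_0)^p]$ for $p=1,2$ give $\E_0\!\int_0^T(\lambda_t^k(f)-\lambda_t^k(f_0))^2\,dt\lesssim T\epsilon_T^2$ with explicit constants matching the definition of $\kappa$; the extra $\log(r_T^{-1})$ factor in \eqref{KLcontrol} is kept as a safety margin so that a single constant serves both \eqref{KLcontrol} and \eqref{Deno}.

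For \eqref{Deno}, the previous bound reduces the task to $\P_0(M_T\geq T\epsilon_T^2)\lesssim\log\log T\,\log^3 T/(T\epsilon_T^2)$. On $\Omega_T$, $N^\ell[t-A,t)\lesssim\log T$, so every intensity is $O(\log T)$, which forces $\lambda_t^k(f)/\lambda_t^k(f_0)\in[r_T,r_T^{-1}]$ with $r_T\asymp 1/\log T$ and hence $|\log(\lambda_t^k(f_0)/\lambda_t^k(f))|\lesssim\log\log T$. I would introduce a stopping time $\tau$ at which $\Omega_T$ is first violated and apply a Bernstein--Freedman inequality for point-process martingales (in the spirit of (7.7) of \cite{HRR} already used in the proof of Lemma~\ref{lem:test:d1}) to the stopped martingale $M_{T\wedge\tau}$, whose jumps are $O(\log\log T)$ and whose predictable variation is $\lesssim(\log\log T)^2\sum_k\int_0^T\lambda_t^k(f_0)\,dt$. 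In the regime where the exponential bound is not strong enough to reach the prescribed polynomial rate, a Chebyshev argument on $\E_0[M_T^2]$ takes over, while the residual $\Omega_T^c$ contribution is absorbed thanks to $\P_0(\Omega_T^c)\leq T^{-\alpha}$; the $\log^3 T$ factor in the final rate encodes the higher moments of $\lambda_t^k(f_0)$ used for this residual control.

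The main obstacle is this last tail step: one must calibrate the truncation induced by $\Omega_T$, the Bernstein constants, and the polynomial second-moment control off $\Omega_T$ so that they combine to the precise rate $\log\log T\,\log^3 T/(T\epsilon_T^2)$. The delicacy comes from the fact that $\log(\lambda_t^k(f_0)/\lambda_t^k(f))$ is a priori unbounded and must be tamed through a stopping-time truncation (rather than a naive indicator) in order to preserve the predictability needed for the It\^o isometry and the martingale concentration inequality.
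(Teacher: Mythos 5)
Your treatment of \eqref{KLcontrol} is fine: the elementary bound $u-1-\log u\leq (u-1)^2/u$ together with $\lambda_t^k(f)\geq\nu_k\geq\nu_k^0/2$ and the second-moment bound on $\int_0^T(\lambda_t^k(f)-\lambda_t^k(f_0))^2\,dt$ yields a constant smaller than $\kappa$, and multiplying by $\log(r_T^{-1})\geq 1$ recovers the stated bound. (The paper instead uses $\Psi(u)\leq 4\log(r_T^{-1})(u-1)^2$ valid for $u\geq r_T$ on $\tilde\Omega_T$ and then controls the off-$\tilde\Omega_T$ remainder separately, but your route is acceptable here.)

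The argument for \eqref{Deno} has a genuine gap. You write $L_T(f_0)-L_T(f)=KL(f_0,f)+M_T$ with $M_T$ the martingale $\sum_k\int_0^T\log(\lambda_t^k(f_0)/\lambda_t^k(f))\,d\tilde N_t^k$. That identity is false: the compensated decomposition gives $L_T(f_0)-L_T(f)=A_T+M_T$ with $A_T=\sum_k\int_0^T\big[\lambda_t^k(f_0)\log(\lambda_t^k(f_0)/\lambda_t^k(f))-(\lambda_t^k(f_0)-\lambda_t^k(f))\big]dt$, a \emph{random} integral whose expectation is $KL(f_0,f)$ but which is certainly not deterministic (the intensities are stochastic). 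Replacing $A_T$ by $\E_0[A_T]$ silently drops the centered fluctuation $A_T-KL(f_0,f)$, and that fluctuation has variance of the same order as the martingale part, so it cannot be absorbed. This is precisely why the paper does \emph{not} argue via a martingale concentration inequality: instead it treats $L_T(f_0)-L_T(f)-\E_0[L_T(f_0)-L_T(f)]$ globally, blocks $[0,T]$ into $2Q_T$ intervals of length $x=D\log T$, uses the coupling from Proposition 3.1 of \cite{RBR} (via the extinction time of the cluster representation) to replace the odd (resp.\ even) blocks by i.i.d.\ copies up to a $Q_T\P_0(T_e>x-A)$ error, and then applies Chebyshev to the sum of i.i.d.\ blocks. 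The per-block variance collects both the compensator fluctuation and the martingale increment, each controlled via the same $\Psi\leq 4\log(r_T^{-1})(u-1)^2$ and $|\log u|\leq 2\log(r_T^{-1})|u-1|$ inequalities, giving $\var_0(F_0)\lesssim\log(r_T^{-1})\mathcal{N}_T^2(T/Q_T)^2\epsilon_T^2$ and hence the polynomial rate $\log\log T\,\log^3 T/(T\epsilon_T^2)$. A Bernstein--Freedman bound applied only to $M_T$, even after a stopping-time truncation at $\Omega_T$, would neither produce the required rate nor control the missing $A_T-KL$ term; to repair your argument you would need either to reintroduce the centered compensator and control its variance (which naturally leads to a blocking/mixing device of the kind the paper uses) or to use a substantially different concentration mechanism for the full log-likelihood-ratio process.
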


\begin{proof}
We consider the set $\tilde\Omega_T$ defined in Lemma~\ref{control} and we set ${\mathcal N}_T=C_\alpha \log T$. 
We have:
\begin{eqnarray*}
KL(f^0,f)&=&\sum_{k=1}^K\E_0\left[\int_0^T\log\left(\frac{\lambda_t^k(f_0)}{\lambda_t^k(f)}\right)dN^k_t
-\int_0^T\left(\lambda_t^k(f_0)-\lambda_t^k(f)\right)dt\right]\\
&=&\sum_{k=1}^K\E_0\left[\int_0^T\log\left(\frac{\lambda_t^k(f_0)}{\lambda_t^k(f)}\right)\lambda_t^k(f_0)dt
-\int_0^T\left(\lambda_t^k(f_0)-\lambda_t^k(f)\right)dt\right]\\
&=&\sum_{k=1}^K\E_0\left[\int_0^T\Psi\left(\frac{\lambda_t^k(f)}{\lambda_t^k(f_0)}\right)\lambda_t^k(f_0)dt\right],
%&=&\sum_{k=1}^K\E_0\left[\int_0^T\left(-\log\left(\frac{\lambda_t^k(f)}{\lambda_t^k(f_0)}\right)-1+\frac{\lambda_t^k(f)}{\lambda_t^k(f_0)}\right)\lambda_t^k(f_0)dt\right].
\end{eqnarray*}
where for $u>0$, $\Psi(u):=-\log(u)-1+u\geq 0$. First, observe that on $\tilde\Omega_T\cap B(\e_T, B)$,
\begin{equation}\label{deltaT}
\frac{\lambda_t^k(f)}{\lambda_t^k(f_0)}\geq\frac{\nu_k}{\nu^0_k+\sum_{\ell=1}^K\int_{t-A}^{t-}h^0_{\ell,k}(t-u)dN^{\ell}(u)}\geq\frac{\min_k\nu^0_k-\e_T}{\max_k\nu^0_k+\max_{\ell,k}\|h_{\ell,k}^0\|_\infty K{\mathcal N}_T}=:r_T.
\end{equation}
Furthermore, observe that for $u\in[r_T,1/2)$, $\Psi(u)\leq\log(r_T^{-1})$, since $r_T=o(1)$.
And for all $u\geq 1/2$, $\Psi(u) \leq (u-1)^2$. Finally, for any $u\geq r_T$,
$$
\Psi(u)\leq 4\log(r_T^{-1})(u-1)^2.
$$
Therefore, on $B(\e_T, B)$, we have
\begin{eqnarray*}
0\leq KL(f^0,f)&\leq& 4\log(r_T^{-1})\sum_{k=1}^K\E_0\left[\int_0^T\frac{ (\lambda_t^k(f_0)-\lambda_t^k(f))^2}{ \lambda_t^k(f_0)}\1_{\tilde\Omega_T} dt\right] 
+ R_T\\
&\leq&4\log(r_T^{-1})\sum_{k=1}^K(\nu^0_k)^{-1}\E_0\left[\int_0^T(\lambda_t^k(f_0)-\lambda_t^k(f))^2dt\right] +R_T
\end{eqnarray*}
where 
$$R_T  = \sum_{k=1}^K\E_0\left[\1_{\tilde\Omega_T^c}\int_0^T\left(-\log\left(\frac{\lambda_t^k(f)}{\lambda_t^k(f_0)}\right)-1+\frac{\lambda_t^k(f)}{\lambda_t^k(f_0)}\right)\lambda_t^k(f_0)dt\right]. $$
We first deal with the first term. Using stationarity of the process and Proposition 2 of \cite{HRR}
\begin{equation*}
\begin{split}
\E_0\left[\int_0^T(\lambda_t^k(f_0)-\lambda_t^k(f))^2dt\right]&\leq 2T(\nu^0_k-\nu_k)^2+2\int_0^T\E_0\left[\left(\sum_{\ell=1}^K\int_{t-A}^{t^-}(h_{\ell,k}-h^0_{\ell,k})(t-u)dN^{\ell}(u)\right)^2\right]dt\\
& \leq 2T\e_T^2+4K\int_0^T\E_0\left[\sum_{\ell=1}^K\left(\int_{t-A}^{t^-}(h_{\ell,k}-h^0_{\ell,k})(t-u)\lambda_u^\ell(f_0)du\right)^2\right]dt\\
& \quad +4K\int_0^T\E_0\left[\sum_{\ell=1}^K\left(\int_{t-A}^{t^-}(h_{\ell,k}-h^0_{\ell,k})(t-u)\left(dN^{\ell}_u - \lambda_u^\ell(f_0)du\right)\right)^2\right]dt\\
& \leq 2T\e_T^2+4K \sum_{\ell=1}^K \|h_{\ell,k}-h^0_{\ell,k}\|_2^2\int_0^T\int_{t-A}^{t^-}\E_0[(\lambda_u^\ell(f_0))^2]dudt \\
& \quad +4K\int_0^T\sum_{\ell=1}^K\int_{t-A}^{t^-}(h_{\ell,k}-h^0_{\ell,k})^2(t-u)\E_0\left[\lambda_u^\ell(f_0)\right]dudt\\
& \leq 2T\e_T^2+4KT\sum_{\ell=1}^K \|h_{\ell,k}-h^0_{\ell,k}\|_2^2\left(A\E_0[( \lambda_0^\ell(f_0))^2]+ \E_0[\lambda_0^\ell(f_0)]\right)\\
&\leq T\e_T^2\left(2+4K\sum_{\ell=1}^K \left(A\E_0[( \lambda_0^\ell(f_0))^2]+ \E_0[\lambda_0^\ell(f_0)]\right)\right).
\end{split}
\end{equation*}
We now deal with $R_T$.
%\begin{eqnarray*}
%R_T&=&\sum_{k=1}^K\E_0\left[\int_0^T\Psi\left(\frac{\lambda_t^k(f)}{\lambda_t^k(f_0)}\right)\lambda_t^k(f_0) \1_{\tilde\Omega_T^c}1_{\left\{\frac{\lambda_t^k(f)}{\lambda_t^k(f_0)}\geq \frac12\right\}}dt\right]+\sum_{k=1}^K\E_0\left[\int_0^T\Psi\left(\frac{\lambda_t^k(f)}{\lambda_t^k(f_0)}\right)\lambda_t^k(f_0) \1_{\tilde\Omega_T^c}1_{\left\{\frac{\lambda_t^k(f)}{\lambda_t^k(f_0)}< \frac12\right\}}dt\right]\\
%&\leq&\sum_{k=1}^K(\nu^0_k)^{-1}\E_0\left[\int_0^T(\lambda_t^k(f_0)-\lambda_t^k(f))^2dt\1_{\tilde\Omega_T^c}\right] 
%\end{eqnarray*}
%Also 
%$$|R_T|\leq \sum_{k=1}^K\E_0\left[\int_0^T\left(\lambda_t^k(f_0)\left|\log\frac{\lambda_t^k(f)}{\lambda_t^k(f_0)}\right|+\lambda_t^k(f_0)+\lambda_t^k(f)\right)dt\times 1_{\Omega_T^c}\right].$$
%Since $\lambda^{k}_t\geq \nu_k^0$ and
%$$0\leq \lambda^{k}_t\leq\nu_k+\sum_{\ell=1}^K\|h_{\ell,k}\|_{\infty}\sup_{t\in[0,T]}N^{\ell}([t-A,t)),$$
We have, on $B(\e_T, B)$, 
\begin{eqnarray} \label{ub:ratio}
\frac{\lambda_t^k(f)}{\lambda_t^k(f_0)}&\leq&(\nu_k^0)^{-1}\left(\nu_k+\sum_{\ell=1}^K\|h_{\ell,k}\|_{\infty}\sup_{t\in[0,T]}N^{\ell}([t-A,t))\right)\\
&\leq&(\nu_k^0)^{-1}\left(\nu^0_k+\epsilon_T+B\sum_{\ell=1}^K\sup_{t\in[0,T]}N^{\ell}([t-A,t))\right).
\end{eqnarray}
Conversely,
\begin{eqnarray} \label{lb:ratio}
\frac{\lambda_t^k(f)}{\lambda_t^k(f_0)}\geq(\nu_k^0-\e_T)\left(\nu^0_k+\sum_{\ell=1}^K\|h^0_{\ell,k}\|_{\infty}\sup_{t\in[0,T]}N^{\ell}([t-A,t))\right)^{-1}.\end{eqnarray}
So, using Lemma~\ref{control}, if $\alpha$ is an absolute constant large enough, $R_T=o(1)$ and
$$R_T=o(T\e_T^2).$$
Choosing $\kappa = 4\sum_{k=1}^K(\nu^0_k)^{-1}\left(3+4K\sum_{\ell=1}^K \left(A\E_0[( \lambda^\ell_0(f_0))^2]+ \E_0[\lambda^\ell_0(f_0)]\right)\right)$
%Choosing $\kappa = 1+\left[ 1 + 2^{K} \left\{A + 2^{K-1}\sum_\ell  E_0([N^{\ell}([-A,0))]^2) +\sum_\ell  \mu_{\ell}^0\right\}\right]$ 
terminates the proof of~\eqref{KLcontrol}.
Note that if $B(\epsilon_T, B)$ is replaced with $B_\infty (\epsilon_T, B) $ (see Remark~\ref{rem:supsorm}) then 
 $$ 
\frac{\lambda_t^k(f)}{\lambda_t^k(f_0)} \leq 1 + \frac{ | \nu_k - \nu_k^0| + \sum_\ell \| h_{\ell,k} - h_{\ell,k}\|_\infty \mathcal N_T}{\nu_k^0} 
$$
and 
 $$ 
\frac{\lambda_t^k(f)}{\lambda_t^k(f_0)} \geq 1 - \frac{ | \nu_k - \nu_k^0| + \sum_\ell \| h_{\ell,k} - h_{\ell,k}\|_\infty \mathcal N_T}{\nu_k^0} 
$$
so that we can take $r_T = 1/2$ and $R_T = o(T\epsilon_T^2)$. 

 We now study 
$$\mathcal L_T:=L_T(f_0) - L_T(f)  - \E_0[L_T(f_0) - L_T(f)].$$ 
We have for any integer $Q_T$ such that $x:=T/(2Q_T)>A$,
 \begin{eqnarray*}
L_T(f_0) - L_T(f)&=&\sum_{k=1}^K\left(\int_0^T\log\left(\frac{\lambda_t^k(f_0)}{\lambda_t^k(f)}\right)dN^k_t-\int_0^T\left(\lambda_t^k(f_0)-\lambda_t^k(f)\right)dt\right)\\
&=&\sum_{q=0}^{Q_T-1}\int_{2qx}^{2qx+x}\sum_{k=1}^K\left(\log\left(\frac{\lambda_t^k(f_0)}{\lambda_t^k(f)}\right)dN^k_t-\left(\lambda_t^k(f_0)-\lambda_t^k(f)\right)dt\right)\\
&&\hspace{1cm}+\sum_{q=0}^{Q_T-1}\int_{2qx+x}^{2qx+2x}\sum_{k=1}^K\left(\log\left(\frac{\lambda_t^k(f_0)}{\lambda_t^k(f)}\right)dN^k_t-\left(\lambda_t^k(f_0)-\lambda_t^k(f)\right)dt\right)\\
&=:&\sum_{q=0}^{Q_T-1}F_q+ \sum_{q=0}^{Q_T-1}\tilde F_q.
 \end{eqnarray*}
Note that $F_q$ is a measurable function of the points of $N$ appearing in $[2qx-A ; 2qx+x)$ denoted by $\mathcal F(N_{|[2qx-A ; 2qx+x)}).$ Using Proposition 3.1 of \cite{RBR}, we consider an i.i.d. sequence $(M_q^x)_{q=0,\ldots,Q_T-1}$ of Hawkes processes with the same distribution as $N$ but restricted to $[2qx-A ; 2qx+x)$ and such that for all $q$, the variation distance between $M_q^x$ and $N_{|[2qx-A ; 2qx+x)}$ is less than $2\P_0(T_e>x-A)$, where $T_e$ is the extinction time of the process. We then set for any $q$,
$$G_q= \mathcal F(M_q^x).$$
We have built an i.i.d. sequence $(G_q)_{q=0,\ldots,Q_T-1}$ with the same distributions as the $F_q$'s. Furthermore, for any $q$, $$\P_0(F_q\not=G_q)\leq 2\P_0(T_e>x-A).$$
%Considering a similar construction denoted $(G_q^2)_{q=0,\ldots,Q_T-1}$ associated with he $F_q^2$'s, 
We now have, by stationarity
  \begin{eqnarray*}
  \P_0(\mathcal L_T\geq T\e_T^2)&=& \P_0 \left(L_T(f_0) - L_T(f)  - \E_0[L_T(f_0) - L_T(f)]\geq T\epsilon_T^2\right)\\
  &=&\P_0 \left(\sum_{q=0}^{Q_T-1}(F_q-\E_0[F_q])+ \sum_{q=0}^{Q_T-1}(\tilde F_q-\E_0[\tilde F_q])\geq T\epsilon_T^2\right)\\
 &\leq&2\P_0 \left(\sum_{q=0}^{Q_T-1}(F_q-\E_0[F_q])\geq T\epsilon_T^2/2\right)\\
 &\leq&2\P_0 \left(\sum_{q=0}^{Q_T-1}(G_q-\E_0[G_q])\geq T\epsilon_T^2/2\right)+2\P_0 \left(\exists q;\ F_q\not=G_q\right)\\
 &\leq&2\P_0 \left(\sum_{q=0}^{Q_T-1}(G_q-\E_0[G_q])\geq T\epsilon_T^2/2\right)+4Q_T\P_0(T_e>x-A).
 \end{eqnarray*}
 We first deal with the first term of the previous expression:
  \begin{eqnarray*}
  \P_0 \left(\sum_{q=0}^{Q_T-1}(G_q-\E_0[G_q])\geq T\epsilon_T^2/2\right)&\leq&\frac{4}{T^2\e_T^4}\var_0\left(\sum_{q=0}^{Q_T-1}G_q\right)\\
  &\leq&\frac{4}{T^2\e_T^4}\sum_{q=0}^{Q_T-1}\var_0\left(G_q\right)\\
  &\leq&\frac{4Q_T}{T^2\e_T^4}\var_0\left(G_0\right)=\frac{4Q_T}{T^2\e_T^4}\var_0\left(F_0\right).
  \end{eqnarray*}
Now, by setting $d{\mathcal M}^{(k)}_t=dN^k_t-\lambda_t^k(f_0)dt$,
\begin{eqnarray*}
\var_0\left(F_0\right)&\leq&\E_0\left[F_0^2\right]\\
&\leq&\E_0\left[\left(\sum_{k=1}^K\int_0^\frac{T}{2Q_T}\log\left(\frac{\lambda_t^k(f_0)}{\lambda_t^k(f)}\right)dN^k_t-\sum_{k=1}^K\int_0^\frac{T}{2Q_T}(\lambda_t^k(f_0)-\lambda_t^k(f))dt\right)^2\right]\\
&\lesssim&\sum_{k=1}^K\E_0\left[\left(\int_0^\frac{T}{2Q_T}\Psi\left(\frac{\lambda_t^k(f)}{\lambda_t^k(f_0)}\right)\lambda_t^k(f_0)dt+\int_0^\frac{T}{2Q_T}\log\left(\frac{\lambda_t^k(f_0)}{\lambda_t^k(f)}\right)d{\mathcal M}^{(k)}_t\right)^2\right]\\
&\lesssim&\sum_{k=1}^K\E_0\left[\left(\int_0^\frac{T}{2Q_T}\Psi\left(\frac{\lambda_t^k(f)}{\lambda_t^k(f_0)}\right)\lambda_t^k(f_0)dt\right)^2\right]+\E_0\left[\left(\int_0^\frac{T}{2Q_T}\log\left(\frac{\lambda_t^k(f_0)}{\lambda_t^k(f)}\right)d{\mathcal M}^{(k)}_t\right)^2\right]\\
&\lesssim&\sum_{k=1}^K\frac{T}{Q_T}\E_0\left[\int_0^\frac{T}{2Q_T}\Psi^2\left(\frac{\lambda_t^k(f)}{\lambda_t^k(f_0)}\right)(\lambda_t^k(f_0))^2dt
\right]+\E_0\left[\int_0^\frac{T}{2Q_T}\log^2\left(\frac{\lambda_t^k(f_0)}{\lambda_t^k(f)}\right)\lambda^k_t(f_0)dt\right].
\end{eqnarray*}
Note that on $\tilde\Omega_T$, for any $t\in [0; T/(2Q_T)]$,
$$0\leq \Psi\left(\frac{\lambda_t^k(f)}{\lambda_t^k(f_0)}\right)\lambda_t^k(f_0)\leq C_1(B,f_0){\mathcal N}_T^2,$$
where $C_1(B,f_0)$ only depends on $B$ and $f_0$.
Then,
$$
\E_0\left[\1_{\tilde\Omega_T}\int_0^\frac{T}{2Q_T}\Psi^2\left(\frac{\lambda_t^k(f)}{\lambda_t^k(f_0)}\right)(\lambda_t^k(f_0))^2dt
\right]\leq C_1(B,f_0){\mathcal N}_T^2\times\E_0\left[\1_{\tilde\Omega_T}\int_0^\frac{T}{2Q_T}\Psi\left(\frac{\lambda_t^k(f)}{\lambda_t^k(f_0)}\right)\lambda_t^k(f_0)dt
\right]$$
and using same arguments as for the bound of $KL(f^0,f)$, the previous term is bounded by $\log(r_T^{-1}){\mathcal N}_T^2\times (T/Q_T)\e_T^2$ up to a constant.
Since for any $u\geq 1/2$, we have $|\log(u)|\leq 2|u-1|$, we have for any $u\geq r_T$,
$$|\log(u)|\leq 2\log(r_T^{-1})|u-1|$$
and
\begin{eqnarray*}
\E_0\left[\1_{\tilde\Omega_T}\int_0^\frac{T}{2Q_T}\log^2\left(\frac{\lambda_t^k(f_0)}{\lambda_t^k(f)}\right)\lambda^k_t(f_0)dt\right]&\leq&4\log^2(r_T^{-1})(\nu_k^0)^{-1}\E_0\left[\1_{\tilde\Omega_T}\int_0^\frac{T}{2Q_T}(\lambda^k_t(f_0)-\lambda^k_t(f))^2dt\right]\\
&\lesssim&\log^2(r_T^{-1})(T/Q_T)\e_T^2. 
\end{eqnarray*}
By taking $\alpha \geq 2 $ and using Lemma \ref{control}, we obtain:
$$
\E_0\left[\1_{\tilde\Omega_T^c}\int_0^\frac{T}{2Q_T}\Psi^2\left(\frac{\lambda_t^k(f)}{\lambda_t^k(f_0)}\right)(\lambda_t^k(f_0))^2dt
\right]+\E_0\left[\1_{\tilde\Omega_T^c}\int_0^\frac{T}{2Q_T}\log^2\left(\frac{\lambda_t^k(f_0)}{\lambda_t^k(f)}\right)\lambda^k_t(f_0)dt\right]=o(TQ_T^{-1}\e_T^2).$$
Finally,
$$\var_0\left(F_0\right)\leq C_2(B,f_0)\log(r_T^{-1}){\mathcal N}_T^2\times (T/Q_T)^2\e_T^2.$$
for $C_2(B,f_0)$ a constant only depending on $B$ and $f_0$, and
$$ \P_0(\mathcal L_T\geq T\e_T^2)\leq 8C_2(B,f_0)\log(r_T^{-1}){\mathcal N}_T^2\times (T/Q_T)\times(1/(T\e_T^2)+4Q_T\P_0(T_e>x-A).$$
It remains to deal with the last term of the previous expression. The proof of Proposition~3 of \cite{HRR} shows that there exists a constant $D$ only depending on $f_0$ such that if we take
$x=D\log T$, which is larger than $A$ for $T$ large enough, then
$$4Q_T\P_0(T_e>x-A)=o(T^{-1}).$$
We now have 
$$\log(r_T^{-1}){\mathcal N}_T^2\times (T/Q_T)=O(\log\log(T)\log^3(T)),$$
which ends the proof of the lemma.
\end{proof}
%%%%%%%%%%%%%%%%%%%
\subsection{Proof of Theorem \ref{th:L1}}\label{sec:pr:thL1}
Define  
 $$ A_{L_1}(w_T \varepsilon_T) = \{ f \in \mathcal F; \ \|f-f_0\|_1 \leq w_T \varepsilon_T\},$$
then 
\begin{equation*}
\Pi\left( A_{L_1}(w_T\varepsilon_T)^c |N \right) \leq \Pi( A_{\varepsilon_T}^c  |N ) + \Pi\left( A_{L_1}(w_T\varepsilon_T)^c\cap A_{\varepsilon_T} |N \right). 
\end{equation*}
Using Assumption (i), we just need to prove that 
\begin{equation}\label{th:L1:eq0}
\E_0\left[\1_{\Omega_{1,T}} \Pi\left( A_{L_1}(w_T\varepsilon_T)^c\cap A_{\varepsilon_T} |N \right) \right]= o(1)
\end{equation}
for some well chosen set $\Omega_{1,T} \subset \Omega_T$ such that 
\begin{equation}\label{omega1}
\P_0(\Omega_{1,T}^c\cap \Omega_T ) = o(1).
\end{equation}
Using \eqref{ineq:nu-rho}, there exists $C_0$ such that for all $f\in  A_{\varepsilon_T}$, on $\Omega_T$,
$$\sum_\ell \nu_\ell + \sum_{\ell, k} \rho_{\ell, k} \leq C_0.$$ 
%Using \eqref{ineq:nu-rho} we have for all $\ell$,  $ \nu_\ell + \sum_{k} \rho_{k,\ell} \mu_k^0 \leq \mu_\ell^0+ O(\varepsilon_T +\delta_T)$. 
Therefore,   on $\Omega_T$,
$$A_{L_1}(w_T\varepsilon_T)^c\cap A_{\varepsilon_T}\subset \{ f \in \mathcal F; \  \| f -f_0\|_1 > w_T \varepsilon_T; \sum_\ell (\nu_\ell + \sum_k \rho_{\ell,k} )\leq C_0\}.$$
We set $u_T:=u_0 (\log T)^{1/6} \varepsilon_T^{1/3}$ with $u_0$ a large constant to be chosen later. Let  $\mathcal F_T = \{ f \in \mathcal F; \|\rho\|\leq 1 - u_T\}$.
%\textcolor{red}{Let  $\mathcal F_T = \{ f \in \mathcal F; \|\rho\|\leq 1 - u_T\}$.} 
From Assumption (ii),
$$\Pi(\mathcal F_T^c) \leq e^{-2c_1T \varepsilon_T^2}$$
for $T$ large enough.  Following the same lines as in the proof of Theorem~\ref{th:d1}, we then have 
\begin{equation}\label{grosse}
\begin{split}
& \E_0\left[\1_{\Omega_{1,T}} \Pi\left( A_{L_1}(w_T\varepsilon_T)^c\cap A_{\varepsilon_T} |N \right)\right] \leq 
\P_0(D_T < e^{-c_1 T \varepsilon_T^2 } ) \\
& + e^{c_1 T \varepsilon_T^2}\int_{ A_{L_1}(w_T\varepsilon_T)^c\cap \mathcal F_T} \E_0\left[ \P_f \left( \Omega_{1,T}\cap \{d_{1,T}(f ,f_0) \leq \varepsilon_T\} | \mathcal G_{0^-}\right)\right]d\Pi(f)  + e^{-c_1T\varepsilon_T^2},
\end{split}
\end{equation}
where $\P_f$ denotes the stationary distribution when the true parameter is $f$.
We will now prove that $\P_f \left( \Omega_{1,T}\cap \{d_{1,T}(f ,f_0) \leq \varepsilon_T\} | \mathcal G_{0^-}\right) e^{c_1 T \varepsilon_T^2} = o_{P_0}(1)$ for all $f\in  A_{L_1}(w_T\varepsilon_T)^c\cap \mathcal F_T$.  Let $Z_{m,\ell}$ be defined by 
 $$ Z_{m,\ell} = \int_{ 2 m T/(2J_T)}^{(2m+1)T/(2J_T)}  \left| \nu_\ell - \nu_\ell^0 + \sum_{k=1}^K \int_{t-A}^{t^-} (h_{k,\ell} - h_{k,\ell}^0 )(t-s) dN_{s}^k \right| dt$$
with $J_T$ such that $J_T = \lfloor\kappa_0 (\log T)^{-1}Tu_T^2\rfloor$ and $\kappa_0$ a constant chosen later. Note that $J_T\to +\infty$ and $T/J_T\to+\infty$ when $T\to+\infty$. Since 
$Td_{1,T} ( f, f_0) \geq \max_{1\leq\ell \leq K} \sum_{m=1}^{J_T-1} Z_{m,\ell}$ we have that 
\begin{small}
\begin{equation*}
\begin{split}
\P_f \left(\Omega_{1,T} \cap\{ d_{1,T}(f, f_0) \leq \varepsilon_T\}| \mathcal G_{0^-}  \right) & \leq  \min_{1\leq\ell\leq K} \P_f \left( \Omega_{1,T}  \cap \left\{\sum_{m=1}^{J_T-1} Z_{m,\ell} \leq \varepsilon_T T \right\}| \mathcal G_{0^-} \right)  \\
&\leq \min_{1\leq\ell\leq K} \P_f \left( \left.\Omega_{1,T}  \cap \left\{\sum_{m=1}^{J_T-1} (Z_{m,\ell}-\E_f[Z_{m,\ell}]) \leq \varepsilon_T T - (J_T-1)\E_f[Z_{1,\ell}] \right\}\right|  \mathcal G_{0^-} \right).
\end{split}
\end{equation*}
\end{small}
From Lemma \ref{lem:EZm} we have that there exists $\ell $  (depending on $f$ and $f^0$)  such that 
$\E_f[Z_{1,\ell}] \geq C T \|f-f_0\|_1 /J_T$ for some $C>0$  so that if $f \in A_{L_1}(w_T\varepsilon_T)^c$ then, since $w_T\to+\infty$, 
\begin{small}
\begin{equation*}
\begin{split}
\P_f \left(\Omega_{1,T} \cap\{ d_{1,T}(f, f_0) \leq T\varepsilon_T\}| \mathcal G_{0^-}  \right) 
&\leq \max_\ell \P_f \left(\left. \Omega_{1,T}  \cap \left\{\sum_{m=1}^{J_T-1} [Z_{m,\ell}-\E_f[Z_{m,\ell}]] \leq  - \frac{CT \|f - f_0\|_1}{2} \right\}\right| \mathcal G_{0^-} \right).  
\end{split}
\end{equation*}
\end{small}
The problem in dealing with the right hand side of the above inequality is that the $Z_{m, \ell}$'s are not independent. We therefore show that 
we can construct independent random variables $\tilde Z_{m,\ell}$ such  that, conditionally on $\mathcal G_{0^-}$, 
$\sum_{m=1}^{J_T-1} (Z_{m,\ell}-\E_f[Z_{m,\ell}])$ is close to  $\sum_{m=1}^{J_T-1} (\tilde Z_{m,\ell}-\E_f[\tilde Z_{m,\ell}]) $ on $\Omega_{1,T}$. 
For all $1 \leq m\leq J_T-1$, define 
 $N^{0, m}$   the sub-counting measure of $N$ generated from the ancestors of any type born on $[(2m-1)T/(2J_T), (2m+1)T/(2J_T)]$ and the $K$-multivariate point process $\bar N^m$ defined by $$\bar N^m = N -N^{0,m}.$$ Denote $$\tilde Z_{m, \ell} = \int_{ 2 m T/(2J_T)}^{(2m+1)T/(2J_T)}  \left| \nu_\ell - \nu_\ell^0 + \sum_{k=1}^K \int_{t-A}^{t^-} (h_{k,\ell} - h_{k,\ell}^0 )(t-s) dN_{s}^{0,m,k} \right| dt,$$
 where {$N^{0,m,k}$ if the $k$th coordinate of $N^{0,m}$.
Observe that if $I_m =  [2mT/(2J_T)-A, (2m+1)T/(2J_T)]$, then $\bar N^m(I_m)$ is the number of points of $\bar N^m$ lying in $I_m$. We have:
\begin{equation}\label{diff:ZtildeZ}
\begin{split}
 |Z_{m, \ell} - \tilde Z_{m, \ell}| & =\left| \int_{2 m T/(2J_T)}^{(2m+1)T/(2J_T)} \left( \left| \nu_\ell - \nu_\ell^0 + \sum_{k=1}^K \int_{t-A}^{t^-} (h_{k,\ell} - h_{k,\ell}^0 )(t-s) dN_{s}^{k} \right|\right.\right.\\ &
\qquad \left.\left.-\left| \nu_\ell - \nu_\ell^0 + \sum_{k=1}^K \int_{t-A}^{t^-} (h_{k,\ell} - h_{k,\ell}^0 )(t-s) dN_{s}^{0,m,k} \right|\right)dt
 \right|
\\ 
& \leq  \1_{\bar N^m(I_m)\neq 0} \sum_{k=1}^K \int_{ 2 m T/(2J_T)}^{(2m+1)T/(2J_T)} \int_{t-A}^{t^-} |(h_{k,\ell} - h_{k,\ell}^0 )(t-s)| d\bar N_{s}^{m,k}  dt \\
 &\leq \1_{\bar N^m(I_m)\neq 0} \sum_{k=1}^K \|h_{k,\ell} - h_{k,\ell}^0 \|_1 \bar N^{m,k}(I_m) \leq \|f - f_0\|_1 \bar N^m(I_m).
 \end{split}
 \end{equation}
Let   $\Omega_{1,T}  = \Omega_T \cap \{ \sum_{m=1}^{J_T-1}  \bar N^m(I_m) \leq C T/8\}$. In Lemma \ref{lem:barNm}, we prove that 
there exists $\tilde c_0 $ such that 
 $$ \mathbb P_0\left( \Omega_{1,T}^c \cap \Omega_T \right) \leq e^{- C \tilde c_0 T}.$$
and \eqref{omega1} is satisfied. Using \eqref{diff:ZtildeZ}, we have on $\Omega_{1,T} $
\begin{equation}\label{diff:Z}
 |Z_{m, \ell} - \tilde Z_{m, \ell}| \leq \|f-f_0\|_1 C T/8.
 \end{equation}
Lemma \ref{lem:barNm} proves that there exists a constant $\kappa_0>0$ (see the definition of $J_T$) such that
$$\sum_{m=1}^{J_T-1}\E_f[\bar N^m(I_m) ] \leq C T/8,$$
so that 
$$\sum_{m=1}^{J_T-1}|\E_f[Z_{m, \ell}] - \E_f[\tilde Z_{m, \ell}]| \leq\sum_{m=1}^{J_T-1}\E_f|Z_{m, \ell} - \tilde Z_{m, \ell}| \leq \|f-f_0\|_1 \sum_{m=1}^{J_T-1}\E_f[\bar N^m(I_m) ]  \leq C \|f-f_0\|_1T/8
$$
and 
\begin{align*}
\P_f \left(\Omega_{1,T} \cap\{ d_{1,T}(f, f_0) \leq T\varepsilon_T\}| \mathcal G_{0^-}  \right) 
&\leq \max_\ell \P_f \left(\left. \Omega_{1,T}  \cap \left\{\sum_{m=1}^{J_T-1} [Z_{m,\ell}-\E_f[Z_{m,\ell}]] \leq  - \frac{CT \|f - f_0\|_1}{2} \right\}\right| \mathcal G_{0^-} \right)\\
&\leq \P_f \left( \left.\sum_{m=1}^{J_T-1} (-\tilde Z_{m,\ell}+\E_f(\tilde Z_{m,\ell})) \geq  CT \|f-f_0\|_1 /4 \right| \mathcal G_{0^-}   \right).
\end{align*}
Since by construction the $\tilde Z_{m, \ell}$ are positive, independent, identically distributed and independent of $\mathcal G_{0^-}$, the Bernstein inequality gives
%hence for all $s>0$
%\begin{small}
%\begin{align*}
% \P_f \left( \left.\sum_{m=1}^{J_T} (-\tilde Z_{m,\ell}+\E_f(\tilde Z_{m,\ell})) \geq  CT \|f-f_0\|_1 /4 \right| \mathcal G_0   \right)&   \leq 
%e^{-sCT \|f-f_0\|_1 /4 + sJ_T \E_f(\tilde Z_{1,\ell}) } \left( \E_f (e^{-s \tilde Z_{m,\ell}} )\right)^{J_T}  \\
%& \leq e^{-sCT \|f-f_0\|_1 /4+ sJ_T \E_f(\tilde Z_{1,\ell})} \left( 1 -s \E_f(\tilde Z_{1,\ell}) + s^2 \E_f(\tilde Z_{1,\ell}^2)/2\right)^{J_T}\\
%&=e^{-sCT \|f-f_0\|_1 /4+ sJ_T \E_f(\tilde Z_{1,\ell})}e^{J_T\log \left( 1 -s \E_f(\tilde Z_{1,\ell}) + s^2 \E_f(\tilde Z_{1,\ell}^2)/2\right)}\\
%& \leq  e^{-s C T \|f-f_0\|_1/4 } e^{ s^2 J_T \E_f(\tilde Z_{1,\ell}^2)/2},
%\end{align*}
%\end{small}
%since for $x\geq 0$, $e^{-x}\leq 1-x+x^2/2$ and for any $x>-1$, $\log(1+x)\leq x$.  Finally, minimizing the right hand side of the previous expression with respect to $s$, we obtain:
$$\P_f \left( \left.\sum_{m=1}^{J_T-1} (-\tilde Z_{m,\ell}+\E_f(\tilde Z_{m,\ell})) \geq  CT \|f-f_0\|_1 /4 \right| \mathcal G_{0^-}   \right)   \leq  e^{ - \frac{C^2 T^2  \|f-f_0\|_1^2 }{ 32 (J_T-1) \E_f(\tilde Z_{1,\ell}^2)}}.$$
We have to bound $\E_f(\tilde Z_{1,\ell}^2).$ Observe that
\begin{align*}
\tilde Z_{m, \ell} &\leq \int_{ 2 m T/(2J_T)}^{(2m+1)T/(2J_T)}  \left| \nu_\ell - \nu_\ell^0\right| dt + \int_{ 2 m T/(2J_T)}^{(2m+1)T/(2J_T)}  \sum_{k=1}^K \int_{t-A}^{t^-} \left|(h_{k,\ell} - h_{k,\ell}^0 )(t-s) \right|dN_{s}^{0,m,k}  dt\\
&\leq\frac{T}{2J_T} \left| \nu_\ell - \nu_\ell^0\right| +\sum_{k=1}^K \|h_{k,\ell} - h_{k,\ell}^0 \|_1 N^{0,m,k}(I_m)
\end{align*}
and
\begin{equation*}
\begin{split}
\E_f\left[\tilde Z_{1,\ell}^2 \right] &\leq  \frac{ T^2}{ 2J_T^2}|\nu_\ell - \nu_\ell^0|^2 + 2K \sum_{k=1}^K \|h_{k,\ell}-h_{k,\ell}^0\|_1^2 \E_f[N^{0,1,k}(I_1)^2] \\
& \leq \frac{ T^2}{J_T^2} \|f-f_0\|_1^2 \left(\frac{1}{2} + \frac{2K\max_k\E_f[N^{0,1,k}(I_1)^2]J_T^2 }{ T^2} \right).
\end{split}
\end{equation*}
We then have to bound
$T^{-2}J_T^2\max_k\E_f[N^{0,1,k}(I_1)^2]$. Using notations of Lemma~\ref{lem:barNm}, we have:
\begin{align*}
\E_f[N^{0,1,k}(I_1)^2] &\leq \E_f\left[\left(\sum_{\ell=1}^K\sum_{T/(2J_T)\leq p\leq 3T/(2J_T)} \sum_{k=1}^{B_{p,\ell}} W_{k,p}^\ell\right)^2\right]\\
&\leq \frac{KT}{J_T}\sum_{\ell=1}^K\sum_{T/(2J_T)\leq p\leq 3T/(2J_T)}\E_f\left[\left( \sum_{k=1}^{B_{p,\ell}} W_{k,p}^\ell\right)^2\right]\\
&\leq \frac{KT}{J_T}\sum_{\ell=1}^K\sum_{T/(2J_T)\leq p\leq 3T/(2J_T)}\E_f\left[\E_f\left[\left( \sum_{k=1}^{B_{p,\ell}} W_{k,p}^\ell\right)^2\left| B_{p,\ell}\right.\right]\right]\\
&\leq \frac{KT^2}{J_T^2}\sum_{\ell=1}^K(\nu_\ell^2+\nu_\ell)\E_f[(W^{\ell})^2].
\end{align*}
We now bound $\E_f[(W^{\ell})^2]$ by using Lemma~\ref{lem:Laplacetransform}. Without loss of generality, we can assume that $\|\rho\|>1/2$. We take $t =\frac{1 - \|\rho\|}{2\sqrt{K}} \log \left(\frac{1+\|\rho\|}{2\|\rho\|} \right)$ and
$$\E_f[(W^{\ell})^2]\leq2t^{-2}\E_f[\exp(tW^\ell)]\lesssim t^{-2}\lesssim (1-\|\rho\|)^{-4}$$
and
$$T^{-2}J_T^2\max_k\E_f[N^{0,1,k}(I_1)^2]\lesssim (1-\|\rho\|)^{-4}.$$
Therefore, since $f\in{\mathcal F}_T$, there exists a constant $C'_K$ only depending on $K$ such that
\begin{equation*}
\begin{split}
\P_f \left( \left.\sum_{m=1}^{J_T-1} (-\tilde Z_{m,\ell}+\E_f(\tilde Z_{m,\ell})) \geq  CT \|f-f_0\|_1 /4 \right| \mathcal G_{0^-}   \right)  & \leq  e^{ - C'_KJ_T(1-\|\rho\|)^4}\leq  e^{ - C'_KJ_Tu_T^4}\\
& \lesssim  e^{ - C_K' \kappa_0 (\log T)^{-1}Tu_T^6} \lesssim e^{ - C_K' \kappa_0  u_0^6T \e_T^2} 
\end{split}
\end{equation*}
%\textcolor{red}{and by taking 
%\begin{equation}\label{cond-rho}
%T\e_T^2=o(J_T(1-\|\rho\|)^4),
%\end{equation}
where the last inequality follows from the definition of $u_T$ and $J_T$. We obtain the desired bound as soon as $u_0$ is large enough. 
$$\mathbb P_f \left( \Omega_{1,T} \cap \{d_{1,T} \leq T \varepsilon_T\} |\mathcal G_{0^-} \right)=o(e^{-c_1T\varepsilon_T^2}).$$
Using \eqref{grosse} and Assumption (i), we then have that \eqref{th:L1:eq0} is true, which proves the theorem.

\subsection{Proof of Corollary \ref{coro:L1hatf}} \label{pr:corL1}
Let $w_T\to+\infty$. The proof of Corollary \ref{coro:L1hatf} follows from the usual convexity argument, so that 
$$\|\hat f - f_0\|_1 \leq w_T \varepsilon_T + \E^\pi\left[ \|f - f_0\|_1 \1_{\|f - f_0\|_1 > w_T \varepsilon_T} |N\right],$$
together with a control of the second term of the right hand side similar to the proof of Theorem \ref{th:L1}. We write 
\begin{equation*}
\begin{split}
 \E^\pi\left[ \|f - f_0\|_1 \1_{\|f - f_0\|_1 > w_T \varepsilon_T} |N\right]& \leq \E^\pi\left[ \|f - f_0\|_1 \1_{A_{L_1}(w_T \varepsilon_T)^c} \1_{A_{\varepsilon_T}}|N\right]
 +\E^\pi\left[ \|f - f_0\|_1  \1_{A_{\varepsilon_T}^c}|N\right]
 \end{split}
 \end{equation*}
 and since $ \int  \|f-f_0\|_1 d\Pi(f)\leq \|f_0\|_1+ \int  \|f\|_1 d\Pi(f)<\infty$,
\begin{equation*}
\begin{split}
\P_0 & \left( \E^\pi\left[ \|f - f_0\|_1\1_{A_{L_1}(w_T \varepsilon_T)^c} \1_{A_{\varepsilon_T}} |N\right] > w_T \varepsilon_T \right) \leq \P_0\left( \Omega_{1,T}^c \right) + \P_0\left( D_T< e^{-c_1T\epsilon_T^2}\right)\\
& \quad  + \frac{e^{c_1T\epsilon_T^2}}{w_T \varepsilon_T} \int_{ A_{L_1}(w_T \varepsilon_T)^c}  \|f-f_0\|_1\E_0\left[  \P_f\left( \Omega_{1,T} \cap \{ d_{1,T}(f_0, f)\leq \varepsilon_T\}\right) \vert \mathcal G_{0^-} \right]d\Pi(f) \\
&\leq o(1) + o(1) \int  \|f-f_0\|_1 d\Pi(f)=o(1), 
\end{split}
\end{equation*}
where the last inequality comes from the proof of Theorem \ref{th:L1}. Similarly, using the proof of Theorem \ref{th:d1},
\begin{equation*}
\begin{split}
\P_0 & \left( \E^\pi\left[\|f - f_0\|_1\1_{A_{\varepsilon_T}^c}  |N\right] > w_T \varepsilon_T \right) \leq \P_0\left( \Omega_{T}^c \right) + \P_0\left( D_T< e^{-c_1T\epsilon_T^2}\right) + \E_0[\1_{\Omega_T} \phi] \\
& \quad  +\frac{e^{c_1T\epsilon_T^2}}{w_T \varepsilon_T} \int_{ A_{L_1}(w_T \varepsilon_T)^c}  \|f-f_0\|_1\E_0\left[  \E_f\left[(1-\phi)\1_{ \Omega_{T}} \1_{\{ d_{1,T}(f_0, f)> \varepsilon_T\}}\right] \vert \mathcal G_{0^-} \right]d\Pi(f) \\
&\leq o(1) + o(1) \int  \|f-f_0\|_1 d\Pi(f), 
\end{split}
\end{equation*}
and $\P_0(\|\hat f - f_0\|_1 > 3w_T \varepsilon_T)=o(1)$. Since this is true for any $w_T\to+\infty$, this terminates the proof.

\subsection{Technical lemmas}\label{sec:lemmas}
\subsubsection{Control of the number of occurrences of the process on a fixed interval}
\begin{lemma}\label{control}
For any $M\geq 1$, for any $\alpha>0$, there exists a constant $C_\alpha$ only depending on $f_0$ such that for any $T>0$, the set
$$\tilde\Omega_T=\left\{\max_{\ell\in\{1,\ldots,K\}}\sup_{t\in [0,T]}N^{\ell}([t-A,t))\leq C_\alpha\log T\right\}$$
satisfies
$$\P_0(\tilde\Omega_T^c)\leq T^{-\alpha}$$
and for any $1\leq m\leq M$
$$\E_0\left[ \max_{\ell\in\{1,\ldots,K\}}\sup_{t\in [0,T]}\left(N^{\ell}([t-A,t))\right)^m\times 1_{\tilde\Omega_T^c}\right]\leq 2T^{-\alpha/2},$$
for $T$ large enough.
\end{lemma}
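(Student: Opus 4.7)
The strategy is to reduce the supremum over $[0,T]$ to a maximum of $O(T)$ identically distributed random variables by stationarity, then control each of them by exponential moments obtained from the cluster/branching representation of the linear Hawkes process, and finally deduce the moment statement by Cauchy--Schwarz.

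First, I would partition $[0,T]$ into intervals $I_j = [jA,(j+1)A)$ with $0\leq j\leq \lceil T/A\rceil$. For any $t\in [0,T]$, the sliding window $[t-A,t)$ is contained in $I_{j-1}\cup I_j$ for some $j$, hence
$$\sup_{t\in[0,T]}N^\ell([t-A,t)) \leq 2\max_{-1\leq j\leq \lceil T/A\rceil}N^\ell(I_j).$$
By stationarity of $(N^k)_k$ under $\P_0$ (which holds because $\|\rho^0\|<1$), each $N^\ell(I_j)$ has the same law as $N^\ell([0,A))$, so a union bound over $\ell$ and $j$ gives
$$\P_0(\tilde\Omega_T^c) \leq 2K(\lceil T/A\rceil+2)\,\P_0\bigl(N^\ell([0,A)) > C_\alpha(\log T)/2\bigr).$$

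The main obstacle is to establish an exponential moment bound of the form $\E_0[\exp(t_0 N^\ell([0,A)))]\leq C_0<\infty$ for some $t_0>0$ depending only on $f_0$. This is where the Poisson cluster representation enters: each point of $N$ is the descendant of an immigrant drawn from an independent Poisson process of rate $\nu_m^0$ on $\R$, and the total size of each cluster has a Laplace transform finite on a neighborhood of $0$, thanks to the spectral radius condition $\|\rho^0\|<1$ (this is essentially Lemma~\ref{lem:Laplacetransform} that appears later in the paper, applied to the total progeny of a subcritical multitype Galton--Watson tree). Bounding $N^\ell([0,A))$ by the number of descendants of type $\ell$ of all immigrants born before time $A$, together with Campbell's formula and the cluster Laplace bound, yields the desired exponential moment.

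Once this is in hand, Markov's inequality gives $\P_0(N^\ell([0,A))>x)\leq C_0 e^{-t_0 x}$, so taking $C_\alpha = 2(\alpha+2)/t_0$ makes $\P_0(\tilde\Omega_T^c)\lesssim T\cdot T^{-(\alpha+2)}\leq T^{-\alpha}$ for $T$ large. For the moment statement, setting $X = \max_\ell \sup_{t\in[0,T]}N^\ell([t-A,t))$, I would use Cauchy--Schwarz,
$$\E_0\bigl[X^m\,\1_{\tilde\Omega_T^c}\bigr] \leq \sqrt{\E_0[X^{2m}]\,\P_0(\tilde\Omega_T^c)},$$
and bound $\E_0[X^{2m}]$ by integrating the tail $\P_0(X>x)$ obtained from the same union bound: this produces a $\log T$-type factor times a polynomial in $T$ at worst. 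Enlarging $C_\alpha$ by a constant (still depending only on $\alpha$ and $f_0$) absorbs this polynomial factor inside $T^{-\alpha}$ and yields the stated bound $2T^{-\alpha/2}$ for all $m\leq M$. The real work sits in Step~2; the rest is standard stationary-process bookkeeping.
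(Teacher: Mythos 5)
Your proof is correct and follows the same high-level strategy as the paper for the first part (stationarity reduction to a max of $O(T/A)$ identically distributed windows, then an exponential moment bound from the cluster representation — the paper's shorthand "use Proposition 2 of HRR" is exactly the tool you reconstruct). Where you genuinely diverge is the moment bound. You propose Cauchy--Schwarz, $\E_0[X^m\1_{\tilde\Omega_T^c}]\leq\sqrt{\E_0[X^{2m}]\P_0(\tilde\Omega_T^c)}$, with $\E_0[X^{2m}]\lesssim(\log T)^{2m}$ from the max-of-exponential-tails tail bound, and then absorb the logarithmic overhang by starting from a larger exponent $\alpha'>\alpha$ (encoded in an enlarged $C_\alpha$). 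The paper instead integrates the tail directly and avoids the $(\log T)^m$ bookkeeping via an elegant self-referencing trick: for $x\geq C_\alpha\log T$ it sets $T'=e^{x/C_\alpha}\geq T$, notes $\sup_{[0,T]}\leq\sup_{[0,T']}$, and applies the first statement to $T'$ to get the clean tail $\P_0(X>x)\leq e^{-\alpha x/C_\alpha}$, whence $\int_{C_\alpha\log T}^\infty m x^{m-1}e^{-\alpha x/C_\alpha}dx\leq T^{-\alpha/2}$ for large $T$. Both routes work; the paper's avoids the need to pre-enlarge $\alpha$, while yours is arguably more robust since it does not rely on the first statement holding for every $T>0$ uniformly. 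One small caveat in your write-up: enlarging $C_\alpha$ by a \emph{constant factor} is not quite what does the job — the point is that a larger $C_\alpha$ yields a faster decay $T^{-\alpha'}$, $\alpha'>\alpha$, inside the square root, and it is the excess $\alpha'-\alpha$ in the exponent, not the constant, that swallows the $(\log T)^m$ term.
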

\begin{proof}
For the first part, we split the interval $[-A;T]$ into disjoint intervals of length A and we use Proposition 2 of \cite{HRR}.
For the second part, we set $$X:=\max_{\ell\in\{1,\ldots,K\}}\sup_{t\in [0,T]}\left(N^{\ell}([t-A,t))\right)\times 1_{\tilde\Omega_T^c}\geq 0$$
and the equality
\begin{eqnarray*}
\E_0[X^m]&=&\int_0^{+\infty}mx^{m-1}\P_0(X>x)dx\\
&=&\int_0^{C_\alpha\log T}mx^{m-1}\P_0\left(X>x\right)dx+\int_{C_\alpha\log T}^{+\infty}mx^{m-1}\P_0\left(X>x\right)dx\\
&\leq&m(C_\alpha\log T)^{m-1}\int_0^{C_\alpha\log T}\P_0(\tilde\Omega_T^c)dx+\int_{C_\alpha\log T}^{+\infty}mx^{m-1}\P_0\left(X>x\right)dx\\
&\leq&m(C_\alpha\log T)^m T^{-\alpha}+\int_{C_\alpha\log T}^{+\infty}mx^{m-1}\P_0\left(X>x\right)dx.
\end{eqnarray*}
Furthermore, for $T$ large enough,
\begin{eqnarray*}
\int_{C_\alpha\log T}^{+\infty}mx^{m-1}\P_0(X>x)dx
&\leq&\int_{C_\alpha\log T}^{+\infty}mx^{m-1}\P_0\left(\max_{\ell\in\{1,\ldots,K\}}\sup_{t\in [0,T]}\left(N^{\ell}([t-A,t))\right)>x\right)dx\\
&\leq&\int_{C_\alpha\log T}^{+\infty}mx^{m-1}\P_0\left(\max_{\ell\in\{1,\ldots,K\}}\sup_{t\in [0,e^{x/C_\alpha}]}\left(N^{\ell}([t-A,t))\right)>x\right)dx\\
&\leq&\int_{C_\alpha\log T}^{+\infty}mx^{m-1}\exp(-\alpha x/C_\alpha)dx\leq T^{-\alpha/2}.
\end{eqnarray*}
\end{proof}
\subsubsection{Control of $N[0,T]$}
Let $k\in\{1,\ldots,K\}$. We have the following result.
\begin{lemma} \label{lem:N0T}
For any $k\in\{1,\ldots,K\}$, for all $\alpha >0$ there exists $\delta_0>0$ such that 
$$\P_0\left( \left| \frac{N^k[0, T]}{T}- \mu_k^0\right| \geq\delta_0 \sqrt{\frac{(\log T)^3}{T}}  \right) = O(T^{-\alpha}).$$
\end{lemma}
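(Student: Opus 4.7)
The plan is to apply a block decomposition and a coupling argument in the spirit of the proof of Lemma~\ref{lemma:Kullback}. Set $x_T = D \log T$ for a constant $D > 0$ to be chosen large enough depending on $\alpha$, let $Q_T = \lfloor T/(2x_T) \rfloor$, and for $q = 0, \ldots, Q_T - 1$ define
\[ F_q = N^k[2qx_T, (2q+1)x_T), \qquad \tilde F_q = N^k[(2q+1)x_T, (2q+2)x_T). \]
By stationarity, $\E_0[F_q] = \E_0[\tilde F_q] = x_T \mu_k^0$, and the leftover interval $[2Q_T x_T, T]$ contributes at most $O(\log T)$ in mean, which is negligible compared to $\sqrt{T(\log T)^3}$. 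It therefore suffices to control
\[ S_T := \sum_{q=0}^{Q_T-1}(F_q - \E_0 F_q) \quad \text{and} \quad \tilde S_T := \sum_{q=0}^{Q_T-1}(\tilde F_q - \E_0 \tilde F_q). \]

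Next, using Proposition~3.1 of \cite{RBR} exactly as in the proof of Lemma~\ref{lemma:Kullback}, I would build an i.i.d.\ sequence $(G_q)_q$ with $G_q \stackrel{d}{=} F_0$ such that $\P_0(F_q \neq G_q) \leq 2 \P_0(T_e > x_T - A)$, where $T_e$ denotes the extinction time of the underlying cluster process; an analogous coupling yields $(\tilde G_q)_q$ for $(\tilde F_q)_q$. By Proposition~3 of \cite{HRR}, $\P_0(T_e > x) \leq e^{-cx}$ for some $c > 0$ depending only on $f_0$, so taking $D$ large enough gives
\[ \P_0\!\left(\exists q : F_q \neq G_q \text{ or } \tilde F_q \neq \tilde G_q\right) \leq 4 Q_T e^{-c(x_T - A)} = O(T^{-\alpha}). \]
On the complementary event, $S_T$ and $\tilde S_T$ become sums of $Q_T$ i.i.d.\ centered variables.

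It remains to apply a Bernstein-type inequality. By the cluster representation of the stationary Hawkes process and the assumption $\|\rho^0\| < 1$, Lemma~\ref{lem:Laplacetransform} provides finite exponential moments for $G_0 = N^k[0,x_T]$; routine arguments then yield $\var_0(G_0) \lesssim x_T$ together with a sub-exponential scale $K \lesssim x_T$. The Bernstein inequality for i.i.d.\ sub-exponential variables gives, for any $u > 0$,
\[ \P_0(|S_T| > u) \leq 2\exp\!\left(-c'' \min\!\left(\frac{u^2}{Q_T x_T}, \frac{u}{x_T}\right)\right). \]
Choosing $u = \tfrac{1}{2}\delta_0 \sqrt{T(\log T)^3}$, one has $u^2/(Q_T x_T) \asymp \delta_0^2 (\log T)^2$ and $u/x_T \asymp \delta_0 \sqrt{T\log T}$, both $\geq \alpha \log T$ once $\delta_0$ is large enough, so $\P_0(|S_T| > u) = O(T^{-\alpha})$, and the same bound holds for $\tilde S_T$. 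Combining with the coupling probability yields the lemma.

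The main technical obstacle is obtaining the sub-exponential control of $G_0$ with the correct scaling in $x_T$: everything else is either borrowed verbatim from the proof of Lemma~\ref{lemma:Kullback} (block coupling, exponential tails of $T_e$) or a routine application of Bernstein. The rate $\sqrt{(\log T)^3/T}$ in the statement is precisely what the Gaussian regime of Bernstein produces once one incorporates the $\log T$ factors coming from the block length $x_T$ and from the target polynomial probability $T^{-\alpha}$.
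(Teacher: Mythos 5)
Your proof is essentially correct but follows a genuinely different route from the paper's. The paper does not block the counting process itself; it writes $N^k[0,T] - T\mu_k^0$ as the sum of a martingale term $N^k[0,T] - \int_0^T \lambda^k_t(f_0)\,dt$ and a compensator term $\int_0^T \lambda^k_t(f_0)\,dt - T\mu_k^0$, controls the first by the martingale concentration inequality (7.7) of \cite{HRR} with the explicit variance surrogate $v = \mu_k^0 T + T\delta_T/2$, and controls the second by a direct appeal to Proposition~3 of \cite{HRR} (a concentration bound for $\int_0^T Z\circ\mathfrak{S}_t(N)\,dt$, which internally relies on a block/coupling argument). You instead apply the block decomposition and coupling directly to $N^k[0,T]$, bypassing the martingale decomposition altogether, and close with a Bernstein inequality for the resulting i.i.d.\ blocks. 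Both routes are valid; yours is more self-contained (it does not delegate to Proposition~3 of \cite{HRR}), at the price of having to establish the moment-generating-function bound for $G_0 = N^k[0,x_T]$ via the cluster representation and Lemma~\ref{lem:Laplacetransform} --- the claim $\log\E_0\bigl[e^{\lambda(G_0-\E_0 G_0)}\bigr]\lesssim \lambda^2 x_T$ for $|\lambda|$ below a fixed threshold, which you flag but do not carry out; a short argument in the spirit of Lemma~\ref{lem:barNm} is needed here. Two minor numerical slips, both in the favorable direction: with $u=\tfrac12\delta_0\sqrt{T(\log T)^3}$ one gets $u^2/(Q_Tx_T)\asymp \delta_0^2(\log T)^3$ rather than $(\log T)^2$, and $u/x_T\asymp \delta_0\sqrt{T/\log T}$ rather than $\sqrt{T\log T}$; moreover the sub-exponential scale of a compound-Poisson--type block is $O(1)$, not $O(x_T)$, so the true Bernstein exponent is even larger than you state. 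None of this affects the conclusion.
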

\begin{proof}[Proof of Lemma \ref{lem:N0T}]
 We use Proposition~3 of \cite{HRR} and notations introduced for this result.
We denote $N[-A,0)$ the total number of points of $N$ in $[-A,0)$, all marks included. 
%We also denote $\mu_0=\E_0[N[-A,0)]$ and $\sigma_0^2=\var_0(N[-A,0))$, which are finite quantities (Proposition~2 of \cite{HRR}).
Let $\delta_T:= \delta_0 \sqrt{(\log T)^3/T} $, with $\delta_0$ a constant. We have:
\begin{small}
\begin{equation} \label{ineq:N0T:1}
\P_0\left( \left| \frac{N^k[0, T]}{T}- \mu_k^0\right|>\delta_T\right) \leq
 \P_0\left( \left|N^k[0, T]- \int_0^T\lambda_t^k(f_0)dt\right|>\frac{T\delta_T}{2}\right)+\P_0\left( \left| \int_0^T[\lambda_t^k(f_0)-\mu_k^0]dt\right|>\frac{T\delta_T}{2}\right)
\end{equation}
\end{small}
and we observe that
$$\lambda_t^k(f_0)=\nu_k^0+\int_{t-A}^{t^-}\sum_{\ell=1}^K h_{\ell,k}^{0}(t-s)dN^{\ell}_s=Z\circ\mathfrak{S}_t(N),$$
with $Z(N)=\lambda^k_0(f_0)$, where $\mathfrak{S}$ is the shift operator introduced in Proposition~3 of \cite{HRR}. We then have
$$Z(N)\leq b(1+N[-A,0))$$
with
$$b=\max_k\max\{\nu_k^0,\max_\ell \|h^0_{\ell,k}\|_\infty\}.$$
So, for any $\alpha>0$, the second term of  \eqref{ineq:N0T:1} is $O(T^{-\alpha})$ for $\delta_0$ large enough depending on $\alpha$ and $f_0$. The first term is controlled by using Inequality (7.7) of \cite{HRR} with $\tau=T$, $x=x_0T \delta_T^2$, $H_t= 1$,  $v=\mu_k^0 T+T\delta_T/2$ and
$$M_T=N^k[0, T]- \int_0^T\lambda_t^k(f_0)dt.$$
We take $x_0$ a positive constant such that
$\sqrt{8\mu_k^0x_0}<1,$
so that, for $T$ large enough
$$\frac{T\delta_T}{2}\geq \sqrt{2vx}+x/3.$$
Therefore, %  $\delta_0$ is a positive constant  large enough, 
we have
\begin{eqnarray*}
 \P_0\left( \left|M_T\right|>\frac{T\delta_T}{2}\right)&\leq& \P_0\left( \left|M_T\right|\geq \sqrt{2vx}+x/3 \ \mbox{and} \ \int_0^T\lambda_t^k(f_0)dt\leq v\right)+\P_0\left(\int_0^T\lambda_t^k(f_0)dt> v\right)\\
 &\leq& 2\exp(-x)+\P_0\left( \left| \int_0^T[\lambda_t^k(f_0)-\mu_k^0]dt\right|>\frac{T\delta_T}{2}\right)\\
  &\leq& 2\exp(-x_0\delta_0^2(\log T)^3)+O(T^{-\alpha})=O(T^{-\alpha}),
\end{eqnarray*}
which terminates the proof.
%
%\bigskip
%
%\textcolor{red}{The first term is controlled using Theorem~3  of \cite{HRR} with 
%$H = 1 $, $B=1$. Then there exists constants $\gamma_0, \gamma_1$ with  
%\begin{equation*}
%\begin{split}
%V &\leq  \gamma_0\int_0^T\lambda_t^k(f_0)dt + \gamma_1 T   = \gamma_0 \mu_k^0 T + \gamma_1T   + \gamma_0 \int_0^T[\lambda_t^k(f_0)-\mu_k^0]dt \\
%&\leq  \gamma_0 [\mu_k^0 +\delta_T] T +\gamma_1T \leq C_0 T 
%\end{split}
%\end{equation*}
%for some constant $C_0$, with probability going to 1. This leads to, with $x = T \delta_T^2$
%\begin{equation*}
%\begin{split}
% \P_0\left( \left|N^k[0, T]- \int_0^T\lambda_t^k(f_0)dt\right|>  \sqrt{3C_0} T\delta_T+ T\delta_T^2/3 \right) \leq C e^{-T \delta_T^2 } + O(T^{-\alpha})
%\end{split}
%\end{equation*}}
%. \textcolor{blue}{ Vincent : peux tu verifier mon inegalite ?}
\end{proof}
\subsubsection{Lemma on $\E_f[Z_{1,\ell}]$}
We have the following result which is useful to prove Theorem~\ref{th:L1}. 
\begin{lemma} \label{lem:EZm}
For for all $f \in \mathcal F_T$ such that $d_{1,T}(f,f_0)\leq \varepsilon_T$, there exists~$\ell$ (depending on $f$ and $f^0$) such that on $\Omega_T$,
$$\E_f[Z_{1, \ell}] \geq C\frac{T}{J_T}\|f-f^0\|_1,$$
where $C$ is a constant depending on $f^0$. 
\end{lemma}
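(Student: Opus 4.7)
The argument reduces to a lower bound on the stationary expectation of $|\lambda_t^\ell(f)-\lambda_t^\ell(f_0)|$ under $\P_f$, which I establish via a sparsity-event decomposition.

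First, since $T/J_T\to+\infty$ and each $h_{k,\ell}$ is supported on $[0,A]$, standard mixing estimates for multivariate Hawkes processes (cf.\ Proposition 3.1 of \cite{RBR}) yield that, uniformly on $\Omega_T$,
\[
\E_f\!\left[Z_{1,\ell}\mid \mathcal G_{0^-}\right] = \frac{T}{2J_T}\,\E_f\!\left[|Y_0^\ell|\right]\,(1+o(1)),
\]
where $Y_0^\ell := a_\ell + \sum_{k=1}^K \int_{-A}^{0^-}g_{k,\ell}(-s)\,dN^k_s = \lambda_0^\ell(f)-\lambda_0^\ell(f_0)$ under the stationary $\P_f$, with $a_\ell := \nu_\ell-\nu^0_\ell$ and $g_{k,\ell}:=h_{k,\ell}-h^0_{k,\ell}$. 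It thus suffices to exhibit $\ell$ such that $\E_f[|Y_0^\ell|] \gtrsim \|f-f_0\|_1$ with a constant depending only on $f_0$, $A$ and $K$.

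I condition on two disjoint sparsity events $E_0 := \{N([-A,0))=0\}$ and $\tilde E_{k_0} := \{N^{k_0}([-A,0))=1,\ N^{k'}([-A,0))=0\ \forall k'\neq k_0\}$. Jensen applied to the void-probability formula $\P_f(E_0)=\E_f[\exp(-\sum_k\int_{-A}^0\lambda^k_s(f)\,ds)]$ yields $\P_f(E_0)\geq \exp(-A\sum_k\mu^f_k) =: p_0 > 0$, uniformly on $\mathcal F_T$ since $\|\rho\|\leq 1-u_T$ forces $\mu^f = (I-\rho^\top)^{-1}\nu$ to be bounded. A Poisson-cluster representation of linear Hawkes then gives a uniform lower bound $\phi_{k_0}(s)\geq \phi_0 > 0$ on the Palm density of the event ``$\tilde E_{k_0}$ with the single point at $s$'': it suffices to retain the sub-event that the single point of $[-A,0)$ is a spontaneous ancestor at $(s,k_0)$ (contributing $\nu_{k_0}\,ds$), that no other spontaneous point of any type falls in $[-A,0)$, and that no ancestor from $(-\infty,-A)$ deposits a descendant in $[-A,0)$, each of which has uniform positive probability by subcriticality. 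Since $Y_0^\ell = a_\ell$ on $E_0$ and $Y_0^\ell = a_\ell+g_{k_0,\ell}(-s)$ on $\tilde E_{k_0}$ with point at $s$, disjointness yields
\[
\E_f[|Y_0^\ell|] \;\geq\; p_0\,|a_\ell| \;+\; \phi_0 \sum_{k_0=1}^K \int_0^A |a_\ell+g_{k_0,\ell}(u)|\,du.
\]

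Summing over $\ell$, using the reverse triangle inequality $\int_0^A|a+g(u)|\,du\geq \|g\|_1-A|a|$, and abbreviating $|a|_+:=\sum_\ell|a_\ell|$, $\|g\|_+:=\sum_{k,\ell}\|g_{k,\ell}\|_1$, I obtain
\[
\sum_\ell \E_f[|Y_0^\ell|] \;\geq\; p_0|a|_+ \;+\; \phi_0\|g\|_+ \;-\; \phi_0\,AK\,|a|_+.
\]
A short case analysis closes the argument. If $\|g\|_+\geq 2AK|a|_+$, then $\|f-f_0\|_1 = |a|_++\|g\|_+ \leq \bigl(1+\tfrac{1}{2AK}\bigr)\|g\|_+$, while the two rightmost terms exceed $\tfrac{\phi_0}{2}\|g\|_+$, giving the required bound. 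Otherwise $\|g\|_+< 2AK|a|_+$, hence $\|f-f_0\|_1 \leq (1+2AK)|a|_+$ and the $p_0|a|_+$ term alone dominates. Selecting the $\ell$ maximising $\E_f[|Y_0^\ell|]$ loses at most a factor $K$, and the lemma follows with $C$ depending only on $f_0$, $A$, $K$. The main obstacle is the uniform Palm-density bound $\phi_0>0$, which demands a quantitative Poisson-cluster analysis of linear Hawkes exploiting both the subcriticality $\|\rho\|\leq 1-u_T$ (to control cluster extents and the contribution of ancestors in $(-\infty,-A)$) and the positivity of the spontaneous rates $\nu_k$; the reduction to the stationary regime uniformly on $\Omega_T$ is routine but rests on the classical exponential mixing of subcritical Hawkes processes.
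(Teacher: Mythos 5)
Your plan reproduces the key idea of the paper's proof — isolate the ``no points'' event and the ``exactly one point of a single type'' event, lower bound the stationary expectation of $|\lambda_0^\ell(f)-\lambda_0^\ell(f_0)|$ on them, then reconcile the $\nu$-part and the $h$-part via a case split. The paper implements the event decomposition through a change of measure to the auxiliary law $\mathbb{Q}_f$ (Jacobsen, Corollary 5.1.2), under which the process on $(0,\infty)$ is unit-rate Poisson, so the probabilities of the pivotal events $\Omega_k$ and $\{\max_k N^k[0,A]=0\}$ are explicit and the remaining work is to lower bound the likelihood factor $\mathcal L_A(f)$; you instead invoke Palm calculus for the single-point event. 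These are different technical vehicles for the same estimate.

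There are, however, two genuine gaps that your sketch leaves open, and the first one as written is actually false. You justify $p_0=\exp(-A\sum_k\mu_k^f)>0$ uniformly by asserting that ``$\|\rho\|\leq 1-u_T$ forces $\mu^f=(I-\rho^\top)^{-1}\nu$ to be bounded.'' It does not: $\|(I-\rho^\top)^{-1}\|\leq u_T^{-1}\to\infty$, so $\|\rho\|\leq 1-u_T$ alone allows $\mu^f$ to diverge. The correct uniform bound on $\mu^f$ comes from combining $\Omega_T$ and $d_{1,T}(f,f_0)\leq\varepsilon_T$: inequality \eqref{ineq:nu-rho} gives $\|(I-\rho^\top)(\mu-\mu^0)\|_\infty\lesssim\varepsilon_T$, hence $\|\mu-\mu^0\|_2\lesssim(1-\|\rho\|)^{-1}\varepsilon_T\lesssim\varepsilon_T^{2/3}(\log T)^{-1/6}=o(1)$. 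Without this you cannot conclude $p_0$ is bounded below. The second gap is the ``uniform Palm-density bound $\phi_0>0$.'' You flag it yourself as the main obstacle, and rightly so; nothing in your outline delivers it. To make it uniform you additionally need a lower bound on $\nu_{k_0}$ over the constrained class (the paper uses $\nu_k\geq\nu_k^0/2$, which again rests on the $\Omega_T$/$d_{1,T}$ constraints and not on $\|\rho\|$ alone) and a bound on the probability that the spontaneous ancestor at $(s,k_0)$ has no offspring in $[-A,0)$ and no ancestor in $(-\infty,-A)$ has descendants in $[-A,0)$, all uniform as $\|\rho\|\uparrow1-u_T$. This is exactly where the $\mathbb{Q}_f$ change of measure earns its keep: under $\mathbb{Q}_f$ the configuration of points in $[0,A]$ is standard Poisson, so the event probabilities are explicit, and the whole $f$-dependence is pushed into $\mathcal L_A(f)$, which is bounded below on $\Omega_k$ by a direct computation. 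Finally, a minor remark: the opening ``mixing'' step is unnecessary — under the stationary $\P_f$, $\E_f[Z_{1,\ell}]=\tfrac{T}{2J_T}\E_f[|Y_0^\ell|]$ holds exactly, with no $(1+o(1))$ correction and no conditioning on $\mathcal G_{0^-}$.
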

\begin{proof}
By using the first bound of \eqref{ineq:nu-rho}, we observe that on $\Omega_T$, for any $\ell$, since $\inf_\ell\nu_\ell^0>0$, then $\inf_\ell\mu_\ell^0>0$ (by using \eqref{munu}) and we obtain that $\sum_{k=1}^K\rho_{k,\ell}$ and $\sum_{k=1}^K\nu_k$ are bounded. Therefore $\|f\|_1$ is bounded.
%%%%%%%%%%%%%%%%%%%%%%%%%%%%%%%%
On $\Omega_T$, since $\varepsilon_T\geq\delta_T$, still using \eqref{ineq:nu-rho}, for any $\ell$,
$$\nu_\ell+\sum_{k=1}^K\rho_{k,\ell}\mu_k^0-M\varepsilon_T\leq \nu_\ell^0+\sum_{k=1}^K\rho_{k,\ell}^0\mu_k^0\leq \nu_\ell+\sum_{k=1}^K\rho_{k,\ell}\mu_k^0+M\varepsilon_T$$
for $M$ a constant large enough. By using the formula $$\nu_\ell+\sum_{k=1}^K\rho_{k,\ell}\mu_k=\mu_\ell,\quad \nu_\ell^0+\sum_{k=1}^K\rho_{k,\ell}^0\mu_k^0=\mu_\ell^0,$$ we obtain
$$ \left|(\mu_\ell - \mu_\ell^0)-\sum_k \rho_{k , \ell} (\mu_k - \mu_k^0)\right| \leq  M\varepsilon_T,$$
which means that 
$$\|(I_d-\rho^T)(\mu-\mu^0)\|_\infty\leq M\varepsilon_T.$$
Therefore, since $\|\rho\|=\|\rho^T\|$ ($\rho\rho^T$ and $\rho^T\rho$ have the same eigenvalues),
\begin{align*}
\|\mu-\mu_0\|_2&=\|(I_d-\rho^T)^{-1}(I_d-\rho^T)(\mu-\mu_0)\|_2\\
&\leq(1-\|\rho\|)^{-1}\sqrt{K}\|(I_d-\rho^T)(\mu-\mu^0)\|_\infty\\
&\leq(1-\|\rho\|)^{-1}\sqrt{K}M\varepsilon_T.
\end{align*}
%Therefore, since \textcolor{red}{$\|\rho\|\leq ??$} $\|\mu\|_2$ is bounded and then $\|f\|_1$ is bounded.
%%%%%%%%%%%%%%%%%%%%%%%%%%%%%%%%%%%%
%$$\nu_\ell^0-\delta_T-\varepsilon_T\leq \nu_\ell+\sum_{k=1}^K(\mu_k^0-\mu_k)\rho_{k,\ell}+\sum_{k=1}^K\mu_k\rho_{k,\ell}\leq \nu_\ell^0+\delta_T+\varepsilon_T,$$
%with $\mu_\ell=\E_f[\lambda_t^{\ell}(f)]$.
%%%%%
%First note that by \eqref{ineq:nu-rho}, for all $\ell$, 
%$$ |(\mu_\ell - \mu_\ell^0)-\sum_k \rho_{k , \ell} (\mu_k - \mu_k^0)| \leq  M( \varepsilon_T\vee (\log T)^{3/2}/\sqrt{T} ) = M \delta_T,$$
%therefore $\|\mu -\mu_0 \| \leq  (1-\|\rho\|)^{-1}  M \delta_T \sqrt{K}$ 
%\textcolor{red}{Combining this with $\| \rho\| > 1 - b/\varepsilon_T$ together with $\delta_T \asymp \varepsilon_T$  implies that there exists a constant $C_0$ such that 
%$ \| \mu\| \leq \|\mu_0\| + M \sqrt{K}\delta_T/(b \varepsilon_T)\leq C_0$ and $\| f\|_1 \leq C_0$.}   
Since $f\in{\mathcal F}_T$, $1 - \|\rho\|\geq u_T \gtrsim  \varepsilon_T^{1/3} (\log T)^{1/6}$. Therefore, $\mu$ is bounded.
 As in \cite{HRR}, we denote $\mathbb Q_f$ a measure such that under $\mathbb Q_f$ the distribution of the
full point process restricted to $(-\infty, 0]$ is identical to the
distribution under $\P_f$ and such that on $(0, \infty)$ the process consists of
independent components each being a homogeneous Poisson process with
rate 1. Furthermore, the Poisson processes should be 
independent of the process on $(-\infty,0]$. From Corollary 5.1.2 in
\cite{Jacobsen:2006} the likelihood process is given by
$$\mathcal{L}_t(f) = \exp\left(Kt - \sum_{k=1}^K \int_0^t \lambda_u^k(f)
  du + \sum_{k=1}^K\int_0^t \log (\lambda_u^k(f))
 dN_u^k\right).$$
%and we have for $t \geq 0$ the relation
%\begin{equation} \label{eq:ChangeOfMeasure}
% \E_{\P} \kappa_t({\bf f})^2 =  \E_{\mathbb Q} \kappa_t({\bf f})^2 \mathcal{L}_t,
%\end{equation}
%where $\E_{\P}$ and $\E_{\mathbb Q}$ denote the expectation with respect to $\P$ and $\mathbb Q$  respectively. 
%\textcolor{red}{Assumptions:
%\begin{itemize}
%\item[-] For any $\ell'$, $\nu_{\ell'} \geq \nu_{\ell'}^0 /2 >0.$
%\item[-] $\|f\|_1\leq C_0$
%\item[-] $\sum_k\mu_k\leq C_0$.
%\end{itemize}
%}
Let $\tau>0$  satisfying
$$0<\frac{A\tau K^2}{1-\tau K}<\frac12 \quad \mbox{ and } \quad \tau \leq \frac{ \min_{\ell'} \nu_{\ell'}^0}{ 2 C_0' },$$ %\left(1 \wedge \textcolor{red}{\min_\ell \nu_\ell^0/C_0'}\right).$$
with $C'_0$ an upper bound of $\|f-f_0\|_1$.
\begin{itemize}
\item Assume that for any $\ell'$, $\left| \nu_{\ell'} - \nu_{\ell'}^0\right| <\tau \|f - f_0\|_1 $. Then, for any $\ell'$,
$$\left| \nu_{\ell'} - \nu_{\ell'}^0\right| <\tau \|f - f_0\|_1 =\tau\left(\sum_k\left| \nu_k - \nu_k^0\right|+\sum_{k,\ell}\|h_{k,\ell}-h_{k,\ell}^0\|_1\right)
$$
and
$$\left| \nu_{\ell'} - \nu_{\ell'}^0\right| \leq\sum_{\ell}\left| \nu_{\ell} - \nu_{\ell}^0\right| 
<\frac{\tau K}{1-\tau K}\sum_{k,\ell}\|h_{k,\ell}-h_{k,\ell}^0\|_1.
$$
Let $\ell$ such that
$$\sum_k\|h_{k,\ell}-h_{k,\ell}^0\|_1=\max_{\ell'}\left\{\sum_k\|h_{k,\ell'}-h_{k,\ell'}^0\|_1\right\}.$$
Then, for any $\ell'$,
\begin{equation}\label{C1}
\left| \nu_{\ell'} - \nu_{\ell'}^0\right| <\frac{\tau K^2}{1-\tau K}\sum_k\|h_{k,\ell}-h_{k,\ell}^0\|_1,
\end{equation}
and
\begin{eqnarray}\label{C2}
\|f-f^0\|_1&=&\sum_{\ell'}\left| \nu_{\ell'} - \nu_{\ell'}^0\right|+\sum_{\ell'}\sum_k\|h_{k,\ell'}-h_{k,\ell'}^0\|_1\nonumber\\
&\leq&\left(\frac{\tau K^2}{1-\tau K}+K\right)\sum_k\|h_{k,\ell}-h_{k,\ell}^0\|_1.
\end{eqnarray}
We denote
$$\Omega_k=\left\{\max_{k'\neq k} N^{k'}[0,A]=0,\quad N^{k}[0,A]=1,\quad N^{k'}[-A,0]\leq aA \mu_{k'} \,\forall k'\right\},$$
where $a$ is a fixed constant chosen later.
We then have
\begin{eqnarray*}
\E_f[Z_{m, \ell}] &=&\frac{ T}{ 2J_T}\E_f\left[\left| \nu_\ell - \nu_\ell^0 + \sum_{k=1}^K\int_{0}^{A^-} (h_{k,\ell} - h_{k,\ell}^0 )(A-s) dN^{k}_{s} \right|\right]\\
&\geq&\frac{ T}{ 2J_T} \sum_k \E_f\left[\1_{\max_{k'\neq k} N^{k'}[0,A]=0}\1_{ N^{k}[0,A]=1}  \left| \nu_\ell - \nu_\ell^0 + \int_{0}^{A^-} (h_{k,\ell} - h_{k,\ell}^0 )(A-s) dN^{k}_{s} \right|\right]\\
&\geq& \frac{ T}{ 2J_T} \sum_k \E_{{\mathbb Q}_f}\left[{\mathcal L}_A(f) \1_{\max_{k'\neq k} N^{k'}[0,A]=0}\1_{ N^{k}[0,A]=1}\left| \nu_\ell - \nu_\ell^0 + \int_{0}^{A^-} (h_{k,\ell} - h_{k,\ell}^0 )(A-s) dN^{k}_{s} \right|\right]\\
&\geq& \frac{ T}{ 2J_T} \sum_k\E_{{\mathbb Q}_f}\left[{\mathcal L}_A(f)\1_{\Omega_k}\left| \nu_\ell - \nu_\ell^0 + \int_{0}^{A^-} (h_{k,\ell} - h_{k,\ell}^0 )(A-s) dN^{k}_{s} \right|\right].
\end{eqnarray*}
Note that on $\Omega_k$,
\begin{eqnarray*}
{\mathcal L}_A(f)&:=&\exp\left(KA-\sum_{k'}\int_0^A\lambda_t^{k'}(f)dt+\sum_{k'}\int_0^A\log (\lambda_t^{k'}(f))dN_t^{k'}\right)\\
&\geq&\nu_k\exp(KA)\exp\left(-\sum_{k'}\int_0^A\lambda_t^{k'}(f)dt\right)\\
&\geq&\nu_k\exp(KA)\exp\left(-\sum_{k'}\int_0^A\left(\nu_{k'}+\int_{t-A}^{t-}\sum_{k''}h_{k''k'}(t-u)dN^{k''}_u\right)dt\right)\\
&\geq&\nu_k\exp\left(KA-A\sum_{k'}\nu_{k'}\right)\exp\left(-\int_{-A}^{A^-}\sum_{k',k''}\rho_{k''k'}dN^{k''}_u\right)\\
&\geq&\nu_k\exp\left(KA-A\sum_{k'}\nu_{k'}\right)\exp\left(-aA\sum_{k''}\mu_{k''}\sum_{k'}\rho_{k''k'}-\sum_{k'}\rho_{kk'}\right).
\end{eqnarray*}
Since on $\mathcal F_T$,
$$\nu_k\exp\left(KA-A\sum_{k'}\nu_{k'}\right)\exp\left(-aA\sum_{k''}\mu_{k''}\sum_{k'}\rho_{k''k'}-\sum_{k'}\rho_{kk'}\right) \geq \nu_k e^{- Ka A C_1} \geq \nu_k^0e^{- Ka A C_1} /2  \geq C(f_0),$$
where $C_1$ and $C(f_0)$  are some constants, we have, by definition of ${\mathbb Q}_f$,
\begin{eqnarray*}
I_k&:=&\E_{{\mathbb Q}_f}\left[{\mathcal L}_A(f)\1_{\Omega_k}\left| \nu_\ell - \nu_\ell^0 + \int_{0}^{A^-} (h_{k,\ell} - h_{k,\ell}^0 )(A-s) dN^{k}_{s} \right|\right]\\
%&\geq&C(f_0)\E_{{\mathbb Q}_f}\left[\1_{\max_{k'\neq k} N^{k'}[0,A]=0}\1_{ N^{k}[0,A]=1}  \left| \nu_\ell - \nu_\ell^0 + \int_{0}^{A^-} (h_{k,\ell} - h_{k,\ell}^0 )(A-s) dN^{k}_{s} \right|\1_{\{N^{k'}[-A,0]\leq aA \mu_{k'} \, \forall k'\}}\right]\\
&\geq&C(f_0)\E_{{\mathbb Q}_f}\left[\1_{ N^{k}[0,A]=1}  \left| \nu_\ell - \nu_\ell^0 + \int_{0}^{A^-} (h_{k,\ell} - h_{k,\ell}^0 )(A-s) dN^{k}_{s} \right|\right]\\
&&\times{\mathbb Q}_f(N^{k'}[-A,0]\leq aA \mu_{k'}\, \forall k')\times{\mathbb Q}_f(\max_{k'\neq k} N^{k'}[0,A]=0).
\end{eqnarray*}
Under ${\mathbb Q}_f$, $N^{k}[0,A]\sim \mbox{Poisson}(A)$. If $U\sim Unif([0,A])$,
\begin{eqnarray*}
\E_{{\mathbb Q}_f}\left[\1_{ N^{k}[0,A]=1}  \left|  \int_{0}^{A^-} (h_{k,\ell} - h_{k,\ell}^0 )(A-s) dN^{k}_{s} \right|\right]&=&\E\left[  \left|   (h_{k,\ell} - h_{k,\ell}^0 )(A-U) \right|\right]{\mathbb Q}_f(N^{k}[0,A]=1)\\
&=&\frac{1}{A}\int_0^A \left|   (h_{k,\ell} - h_{k,\ell}^0 )(A-s) \right|ds\times Ae^{-A}\\
&=&e^{-A} \|h_{k,\ell}-h_{k,\ell}^0\|_1.
\end{eqnarray*}
We also have, using \eqref{C1},
\begin{eqnarray*}
\E_{{\mathbb Q}_f}\left[\1_{ N^{k}[0,A]=1}  \left|  \nu_\ell - \nu_\ell^0\right|\right]&=&Ae^{-A} \left|  \nu_\ell - \nu_\ell^0\right| \leq Ae^{-A} \frac{\tau K^2}{1-\tau K}\sum_k\|h_{k,\ell}-h_{k,\ell}^0\|_1.
\end{eqnarray*}
Furthermore,
$${\mathbb Q}_f(\max_{k'\neq k} N^{k'}[0,A]=0)=\exp(-(K-1)A),$$
and
\begin{eqnarray*}
{\mathbb Q}_f(N^{k'}[-A,0]\leq a A\mu_{k'} \, \forall k')&\geq& 1 -\sum_{k'} {\mathbb Q}_f\left( N^{k'}[-A,0]>  aA \mu_{k'} \right)\\ &\geq& 1 - \sum_{k'} \frac{ \mu_{k'} A}{ aA \mu_{k'}} = 1 - \frac{K}{ a }=\frac12,
\end{eqnarray*}
with $a=2K$. Finally,
\begin{eqnarray*}
I_k&\geq&\frac12C(f_0) \exp(-KA)\left(1-\frac{A\tau K^2}{1-\tau K}\right)\|h_{k,\ell}-h_{k,\ell}^0\|_1
%&\geq&\frac{\nu_k}{2}\exp\left(-A\sum_{k'}\nu_{k'}-aA\sum_{k''}\mu_{k''}\sum_{k'}\rho_{k''k'}-\sum_{k'}\rho_{kk'}\right)\left(1-\frac{A\tau K^2}{1-\tau K}\right)\|h_{k,\ell}-h_{k,\ell}^0\|_1
\end{eqnarray*}
and using \eqref{C2},
\begin{eqnarray*}
\E_f[Z_{m, \ell}] &\geq&\frac{ T}{ 2J_T} \sum_k I_k\\
&\geq&\frac{ T}{ 2J_T} \frac12C(f_0) \exp(-KA)\left(1-\frac{A\tau K^2}{1-\tau K}\right)\sum_k\|h_{k,\ell}-h_{k,\ell}^0\|_1\\
&\geq& C\frac{T}{J_T}\|f-f^0\|_1,
\end{eqnarray*}
where $C$ depends on $f_0$.
\item We now assume that there exists $\ell$ such that
$$\left| \nu_\ell - \nu_\ell^0\right| \geq \tau \|f - f_0\|_1.$$  In this case, using similar arguments, still with $a=2K$,
\begin{equation*}
\begin{split}
\E_f[Z_{m, \ell}]&\geq\frac{T}{2J_T} \P_f[\{\max_k N^{k}[0,A]=0\}]\left| \nu_\ell - \nu_\ell^0\right| \\
 &\geq  \frac{\tau  T}{2J_T}\|f-f_0\|_1 \E_{{\mathbb Q}_f}\left[{\mathcal L}_A(f)\1_{\{\max_k N^{k}[0,A]=0\}}\right]\\ &\geq  \frac{\tau  T}{2J_T}\|f-f_0\|_1\E_{{\mathbb Q}_f}\left[{\mathcal L}_A(f)\1_{\{\max_k N^{k}[0,A]=0\}} \1_{\{N^k[-A,0]\leq aA\mu_k,\, \forall k\}}\right]\\
&\geq \frac{\tau  T}{2J_T}\|f-f_0\|_1\exp\left(KA-A\sum_{k'}\nu_{k'}-aA\sum_{k''}\mu_{k''}\sum_{k'}\rho_{k''k'}\right)\E_{{\mathbb Q}_f}\left[ \1_{\{N[0,A]=0\}} \1_{\{\forall k\, N^k[-A,0]\leq aA \mu_k\}}\right]\\
&\geq \frac{\tau  T}{4J_T}\|f-f_0\|_1\exp\left(-A\sum_{k'}\nu_{k'}-aA\sum_{k'}(\mu_{k'}-\nu_{k'})\right) \geq C \frac{T}{J_T}\|f-f_0\|_1
\end{split}
\end{equation*}
for $C$ depending on $f_0$.  Lemma \ref{lem:EZm} is proved.
\end{itemize}
\end{proof}
\subsubsection{Upper bound for the Laplace transform of the number of points in a cluster} 
In the next lemma, we refine the proof of Lemma 1 of \cite{HRR}. Given an ancestor of type $\ell$, we denote $W^{\ell}$ the number of points in its cluster. We have the following result.
\begin{lemma} \label{lem:Laplacetransform}
Assume $\|\rho\| < 1$ and consider $t$ such that $0\leq t \leq\frac{1 - \|\rho\|}{2\sqrt{K}} \log \left(\frac{1+\|\rho\|}{2\|\rho\|} \right)$.
Then, we have for any $\ell\in\{1,\ldots,K\}$,
$$\E_f[\exp(t W^{\ell})]\leq \frac{1+\|\rho\|}{2\|\rho\|}.$$
Moreover, if $\|\rho\|\leq 1/2$, then there exist two absolute constants $c_0$ and $C_0$ such that if $\sqrt{K}t\leq c_0$, then $\E_f[\exp(t W^{\ell})]\leq C_0$.
%Assume $\|\rho\| < 1$ and consider $t$ such that $t <\frac{1 - \|\rho\|}{1+\|\rho\|} \log \left(\frac{1+\|\rho\|}{2\|\rho\|} \right)/\sqrt{K}$.
%Then, we have for any $\ell\in\{1,\ldots,K\}$,
%$$\E_f[\exp(t W^{\ell})]\leq \exp\left( \frac{ 2t \sqrt{K} }{1 -  \|\rho\|}\right).$$
%In particular, with $t$ such that $t \sqrt{K} \leq\frac{(1-\|\rho\|)^2}{2} \log \left(\frac{1+\|\rho\|}{2\|\rho\|} \right)$, $\E_f[\exp(t W^{\ell})]$ is bounded by a constant only depending on $K$.
Finally,
\begin{equation*}
\E_f[W^{\ell}]= \1^T(I - \rho^T)^{-1}\bf e_\ell.
\end{equation*}
\end{lemma}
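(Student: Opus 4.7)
The plan is to reduce the bound to an induction on the generation-truncated Laplace transform via the Hawkes--Oakes cluster (branching) representation of a linear multivariate Hawkes process. An ancestor of type $\ell$ generates, independently across $k$, a $\mathrm{Poisson}(\rho_{\ell,k})$ number of first-generation offspring of type $k$, each of which launches an independent subcluster distributed as $W^k$. This yields $W^\ell \stackrel{d}{=} 1 + \sum_{k=1}^K\sum_{i=1}^{N_{\ell,k}} W_i^{(k)}$, and taking Laplace transforms (using the Poisson moment generating function) gives the functional equation
\[
\psi_\ell(t)=\exp\!\Bigl(t+\sum_{k=1}^K\rho_{\ell,k}(\psi_k(t)-1)\Bigr),\qquad \psi_\ell(t):=\E_f[e^{tW^\ell}].
\]
Since $W^\ell\geq 1$ a.s., $g_\ell(t):=\log\psi_\ell(t)\in[t,+\infty]$ for $t\geq 0$.

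I would approximate by generations: let $W^\ell_n$ be the number of descendants of the type-$\ell$ ancestor up to generation $n$ (ancestor included), so that $W^\ell_n\uparrow W^\ell$ a.s., and the truncated log-transforms $g_{n,\ell}$ satisfy $g_{0,\ell}=t$ together with $g_{n+1,\ell}=t+\sum_k \rho_{\ell,k}(e^{g_{n,k}}-1)$. Set $M:=(1+\|\rho\|)/(2\|\rho\|)$ and $L:=\log M$. The key analytic input is the elementary inequality $e^x-1\leq xe^L$ valid for $x\in[0,L]$ (both sides vanish at $0$ and the derivative difference $e^L-e^x$ is non-negative on $[0,L]$). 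I claim by induction that $\|g_n\|_2\leq L$ for every $n$. The base case is $\|g_0\|_2=t\sqrt{K}\leq (1-\|\rho\|)L/2\leq L$ by the assumed bound on $t$. For the inductive step, $\|g_n\|_2\leq L$ forces $g_{n,k}\in[0,L]$ componentwise, and the analytic input yields the componentwise bound $g_{n+1}\leq t\1+e^L\rho g_n$ with both sides non-negative; taking the Euclidean norm gives
\[
\|g_{n+1}\|_2\leq t\sqrt{K}+e^L\|\rho\|\|g_n\|_2\leq \frac{(1-\|\rho\|)L}{2}+\frac{1+\|\rho\|}{2}L=L,
\]
where I used $e^L\|\rho\|=(1+\|\rho\|)/2$ and the identity $\|\rho v\|_2\leq\|\rho\|\|v\|_2$. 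Monotone convergence then yields $g_\ell(t)\leq L$, that is $\psi_\ell(t)\leq M$, establishing the first claim.

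For the case $\|\rho\|\leq 1/2$, the same induction run with the constant choice $M=3/2$, $L=\log(3/2)$ goes through: $e^L\|\rho\|\leq 3/4$, so the inductive step only requires $t\sqrt{K}\leq L/4$, giving admissible absolute constants $c_0=\log(3/2)/4$ and $C_0=3/2$. Finally, the expectation formula is obtained either by differentiating the functional equation at $t=0^+$ or, equivalently, by summing over generations: with $Z_n^\ell\in\R^K$ the type-$k$ gen-$n$ population, $\E[Z_{n+1}^\ell\mid Z_n^\ell]=\rho^T Z_n^\ell$ gives $\E[Z_n^\ell]=(\rho^T)^n e_\ell$, whence $\E[W^\ell]=\sum_{n\geq 0}\1^T\E[Z_n^\ell]=\1^T(I-\rho^T)^{-1}e_\ell$, and equivalently $m=\1+\rho m$, $m=(I-\rho)^{-1}\1$. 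The main obstacle is the choice of norm in the induction: running it in $\ell^\infty$ would force $\|\rho\|_\infty\leq\sqrt{K}\|\rho\|$ inside the nonlinear step and lose a $\sqrt{K}$ factor in the allowed range of $t$, so the refinement over HRR's Lemma~1 is precisely to iterate in $\ell^2$, where the spectral norm of $\rho$ appears directly alongside the sharp bound $e^x-1\leq e^L x$.
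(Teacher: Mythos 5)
Your proof is correct and follows essentially the same route as the paper: iterate the vector of generation-truncated log-Laplace transforms (your $g_n$ matches the paper's $g_n(t\mathbf{1})$), close the induction in the Euclidean norm so that the spectral norm of $\rho$ appears, and pass to the limit by monotone convergence. The only internal difference is that you bound the nonlinear increment via the componentwise convexity estimate $e^x-1\le xe^L$ on $[0,L]$ together with positivity of the iterates, whereas the paper applies the mean-value theorem to the map $\phi$ using the operator norm of its Jacobian; both yield the same closing inequality $\|g_{n+1}\|_2\le t\sqrt{K}+e^L\|\rho\|\,\|g_n\|_2$ and the same constant $(1+\|\rho\|)/(2\|\rho\|)$.
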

\begin{proof}[Proof of Lemma \ref{lem:Laplacetransform}]
 We introduce $K^\ell(n)\in\R^K$ the vector of the number of descendants of the $n$th generation from a single ancestral point of type $\ell$, with $K^\ell(0)={\bf e}_\ell$, where $({\bf e}_\ell)_k=\1_{k=\ell}$. More precisely, $(K^\ell( n))_k$ is the number of descendants of the $n$th generation and of the type $k$ from a single ancestral point of type $\ell$.
Then,
$$W^{\ell}=\1^T\times\sum_{n=0}^\infty K^\ell(n).$$
We now set for any $\theta\in\R^K$,
$$\phi_\ell(\theta)=\log\left(\E_f[\exp(\theta^TK^\ell(1))]\right)$$
and 
$$\phi(\theta)=(\phi_1(\theta),\ldots,\phi_K(\theta))^T.$$
Note that $$K^\ell(1)_j \sim{\mathcal P}\left(\rho_{\ell,j}\right), \quad \forall j \leq K$$
and
$$\phi_\ell(\theta)=\sum_{j=1}^K \log\left(\E_f[\exp(\theta_j K^\ell(1)_j)]\right)
=\sum_{j=1}^K\rho_{\ell,j}(\exp(\theta_j)-1).$$
Therefore,
$$(D\phi(\theta))_{\ell,j}:=\frac{\partial \phi_\ell(\theta)}{\partial\theta_j}=\rho_{\ell,j}\exp(\theta_j)$$
and for any $x\in\R^K$, since $\|\rho\|:=\sup_{x, \|x\|_2=1}\|\rho x\|_2$,
\begin{eqnarray*}
\|D\phi(\theta)x\|_2^2&=&\sum_{\ell=1}^K\left(\sum_{j=1}^K\rho_{\ell,j}\exp(\theta_j)x_j\right)^2\\
&=&\sum_j\sum_{j'}(\rho^T\rho)_{j,j'}\exp(\theta_j)x_j\exp(\theta_{j'})x_{j'}\\
&=&v^T\rho^T\rho v\\
&\leq& \|\rho\|^2\|v\|^2_2=\|\rho\|^2\sum_{j=1}^Kx_j^2\exp(2\theta_j)
\end{eqnarray*}
with $v$ the vector of $\R^K$ such that $v_j=\exp(\theta_j)x_j$.
 So,
\begin{eqnarray*}
|\|D\phi(\theta)\||&\leq&\|\rho\|\max_{j}\exp(|\theta_j|) \leq \|\rho\|e^{\|\theta\|_2}.
\end{eqnarray*}
So, by applying the mean value theorem,
$$\|\phi(\theta)\|_2=\|\phi(\theta)-\phi(0)\|_2\leq \|\rho\|e^{\|\theta\|_2}\|\theta\|_2.$$
We use a modification of the arguments in the proof of Lemma 1 of \cite{HRR}. Writing $g_1(\theta) = \theta + \phi(\theta)$, we have for $n\geq 3$:
\begin{align*}
\E_f\left[e^{\theta^T (\sum_{k=0}^n K^\ell(k))} \right] &= \E_f\left[e^{\theta^T (\sum_{k=0}^{n-1} K^\ell(k))}\E_f\left[e^{\theta^TK^\ell(n)} |K^\ell(n-1),\ldots, K^\ell(1) \right]\right]\\
&=
\E_f\left[ e^{\theta^T ( \sum_{k=0}^{n-2} K^\ell(k) )} e^{(\theta+ \phi(\theta))^T K^\ell(n-1) } \right]= \E_f\left[ e^{\theta^T ( \sum_{k=0}^{n-2} K^\ell(k) )} e^{g_1(\theta)^T K^\ell(n-1) } \right]\\
& = \E_f\left[e^{\theta^T ( \sum_{k=0}^{n-3} K^\ell(k) )} e^{(\theta+ \phi(g_1(\theta)))^T K^\ell(n-2) } \right] =  \E_f\left[e^{\theta^T ( \sum_{k=0}^{n-3} K^\ell(k) )} e^{g_2(\theta)^T K^\ell(n-2) } \right] \\
&= \E_f\left[e^{\theta^TK^\ell(0)} e^{g_{n-1}(\theta)^T K^\ell(1) } \right] = e^{(g_n(\theta)_\ell)}, 
\end{align*}
with the induction formula: $g_n(\theta) = \theta + \phi(g_{n-1}(\theta))$ for $n\geq 2$. In particular, 
$$\|g_1(\theta)\|_2 \leq \|\theta\|_2 ( 1 + \|\rho\| e^{\|\theta\|_2})\quad\mbox{and}\quad \|g_n(\theta)\|_2 \leq \|\theta \|_2+ \|\rho\| e^{\|g_{n-1}(\theta)\|_2}\|g_{n-1}(\theta)\|_2.$$ We now set $C:= (1 + \|\rho\|)/(1-\|\rho\|)>1$. Then, 
 if $\|g_{n-1}(\theta) \|_2\leq \|\theta\|_2 ( 1 + C) $,
$$\|g_n(\theta)\|_2 \leq \|\theta\|_2 ( 1 +\| \rho\|(1+C)e^{\|\theta\|_2 (1+C) } ) \leq \|\theta\|_2 ( 1+ C)$$
as soon as 
\begin{equation}\label{rayon}
\|\theta\|_2 \leq (1+C)^{-1} \log (C/(\|\rho\| ( 1+ C)))=\frac{1 - \|\rho\|}{2} \log \left(\frac{1+\|\rho\|}{2\|\rho\|} \right).
\end{equation}
Since $\|\rho\|<1$, the previous upper bound is positive. Note that under \eqref{rayon}, $\|\theta\|_2 \leq  \log (C/\|\rho\|)$, and
$$
\|g_1(\theta)\|_2 \leq \|\theta\|_2 ( 1 + \|\rho\| e^{\|\theta\|_2})
\leq  \|\theta\|_2 ( 1 + \|\rho\| e^{\log(C/\|\rho\|)})\leq \|\theta\|_2(1+C).$$
We finally obtain that under \eqref{rayon},
$$\|g_n(\theta)\|_2 \leq \|\theta\|_2 ( 1+ C), \quad \forall\, n \geq 1.$$
Since for any $m$, $n\mapsto \sum_{k=0}^n (K^\ell(k))_m$ is increasing and $W^{\ell}=\1^T\times\sum_{n=0}^\infty K^\ell(n),$ we have by
monotone convergence that for $t>0$,
$$\E_f[\exp(t W^{\ell})]= \lim_{n \to \infty}\exp(g_n(t \mathbf{1})_{\ell}).$$ 
By the previous result, the right hand side is bounded if
$t$ is  small enough. More precisely, for all $0<t \leq (1+C)^{-1} \log (C/(\|\rho\| ( 1+ C)))/\sqrt{K}$,
$$\E_f[\exp(t W^{\ell})]\leq \exp(t\sqrt{K}(1+C))\leq \frac{C}{\|\rho\|(1+C)}=\frac{1+\|\rho\|}{2\|\rho\|}.$$
The second point is obvious in view of previous computations.
Moreover, since $\E_f[W^{\ell}] = \sum_{n=0}^\infty\E_f[\1^TK^\ell(n)] $ and since for any $v\in \mathbb R^K$
$$\E_f[v^TK^\ell(n) | K^\ell(0), \ldots , K^\ell(n-1)]= \sum_{j=1}^K\sum_{k=1}^K K^{\ell}(n-1)_{j}v_k\rho_{j,k} = v^T\rho^T K^\ell(n-1).$$
We obtain by induction that $\E_f[\1^TK^\ell (n)]  =\1^T (\rho^T)^n \bf e_\ell$ and taking the limit, since $\|\rho\| <1$, 
\begin{equation*}
\E_f[W^{\ell}]= \1^T(I - \rho^T)^{-1}\bf e_\ell.
\end{equation*}
\end{proof}
%%%
\subsubsection{Lemma on $\bar N^m$}
\begin{lemma} \label{lem:barNm}
There exists $\tilde c_0 $ such that  for all $c_0>0$  such that for $T$ large enough,
 $$ \mathbb P_0\left( \sum_{m=1}^{J_T-1} \bar N^m(I_m)> c_0 T \right) \leq e^{-\tilde c_0 c_0 T} .$$
 Furthermore, there exists a constant $\kappa_0>0$ (see the definition of $J_T$) such that
 $$
\sum_{m=1}^{J_T-1}\E_f[\bar N^{m}(I_m)]=o(T).$$
\end{lemma}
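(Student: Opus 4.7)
The plan is to exploit the cluster (Galton--Watson) representation: for each type $\ell$, ancestors form a Poisson process with rate $\nu_\ell^0$ (resp.\ $\nu_\ell$ under $\P_f$), and given the ancestors, the attached clusters are independent copies of $(W^\ell, T_e^\ell)$. I would first observe a pathwise reduction. Since $\Delta := T/(2J_T) \gg A$ for $T$ large and the outer windows $[(2m-1)\Delta, (2m+1)\Delta]$ are disjoint, a point $t \in I_m = [2m\Delta - A, (2m+1)\Delta]$ belongs to $\bar N^m$ only when its ancestor $s_t < (2m-1)\Delta$, which forces $t - s_t > \Delta - A$. Summing over $m$ and grouping by ancestors yields the domination
$$\sum_{m=1}^{J_T - 1} \bar N^m(I_m) \;\leq\; \sum_{(s,\ell)\in \mathcal A,\, s\leq T} V_{s,\ell}, \qquad V_{s,\ell} \leq W^\ell\, \1_{T_e^\ell > \max(\Delta - A,\,-s)},$$
where the $-s$ term accounts for ancestors at $s<0$, which reach $[0,T]$ only if their cluster survives at least $-s$ units of time.

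For the probability bound I would apply the Laplace functional of the marked Poisson process of ancestors:
$$\E_0\!\left[e^{\lambda \sum V_{s,\ell}}\right] = \exp\!\left(\sum_\ell \nu_\ell^0 \int_{-\infty}^T \bigl(\E_0[e^{\lambda V_{s,\ell}}] - 1\bigr)\,ds\right).$$
Since $\|\rho^0\|<1$ is fixed, Lemma~\ref{lem:Laplacetransform} yields a constant $t_0>0$ with $\E_0[e^{t_0 W^\ell}] \leq C_0$, and the generational formula $\E_0[\1^T K^\ell(n)] \leq \sqrt{K}\|\rho^0\|^n$ combined with $T_e^\ell \leq n_e^\ell A$ gives $\P_0(T_e^\ell > v) \leq \sqrt{K}\|\rho^0\|^{v/A}$. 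Taking $\lambda = t_0/2$, a Cauchy--Schwarz bound gives $\E_0[e^{\lambda V_{s,\ell}}] - 1 \leq \sqrt{C_0 \,\P_0(T_e^\ell > \max(\Delta - A,-s))}$, so that the $s$-integral is $\lesssim T\|\rho^0\|^{(\Delta - A)/(2A)} + \mathrm{const} = O(1)$ because $\Delta \to \infty$ faster than $\log T$. Markov's inequality then gives $\P_0(\sum_m \bar N^m(I_m) > c_0 T) \leq C e^{-\lambda c_0 T}$, and choosing $\tilde c_0 := \lambda/2$ absorbs the prefactor for $T$ large.

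For the expectation bound, the same domination combined with Cauchy--Schwarz gives
$$\sum_m \E_f[\bar N^m(I_m)] \leq \sum_\ell \nu_\ell \int_{-\infty}^T \sqrt{\E_f[(W^\ell)^2]\,\P_f(T_e^\ell > \max(\Delta - A, -s))}\,ds.$$
Under $\P_f$ with $\|\rho\|\leq 1 - u_T$, Lemma~\ref{lem:Laplacetransform} applied at $t \asymp u_T^2$ yields $\E_f[(W^\ell)^2] \lesssim u_T^{-4}$, while $\P_f(T_e^\ell > v) \leq \sqrt{K}\,e^{-u_T v/A}$. Splitting at $s=0$ bounds the right-hand side by $Tu_T^{-2}e^{-u_T(\Delta-A)/(2A)} + Cu_T^{-3}$. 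With $\Delta \asymp \log T/u_T^2$ the exponent $u_T\Delta/A$ grows faster than $\log T$, which kills the first term, while $u_T^{-3} \asymp (\log T)^{-1/2}\varepsilon_T^{-1} = o(T)$ since $T\varepsilon_T^2 \to \infty$ implies $T\varepsilon_T \to \infty$. The delicate point is precisely this second bound: near-criticality makes the cluster-size variance blow up polynomially in $1/u_T$, and the scaling $J_T \asymp Tu_T^2/\log T$ is the one that makes the exponential survival tail $e^{-u_T\Delta/A}$ dominate that blow-up.
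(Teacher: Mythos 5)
Your proposal is correct and reaches both conclusions, but it does so by a technically different route from the paper, and it is worth comparing the two. The paper and you both start from the cluster representation and both bound $\sum_m \bar N^m(I_m)$ by a sum of independent cluster contributions over the Poisson process of ancestors. The paper's domination is
$$\sum_{m=1}^{J_T-1}\bar N^m(I_m)\;\le\;\sum_{\ell}\sum_{p\in\mathcal J_T^+}\sum_{k=1}^{B_{p,\ell}}\Bigl(W_{p,k}^\ell-\tfrac{1}{A}(\Delta-A)\Bigr)_{+}+\sum_\ell\sum_{p\le 0}\sum_{k=1}^{B_{p,\ell}}\Bigl(W_{p,k}^\ell-\tfrac{1}{A}(-p-1+2\Delta-A)\Bigr)_{+},$$
with the ancestors aggregated into unit intervals $B_{p,\ell}\sim\mathcal P(\nu_\ell)$ and the rarity of far-reaching clusters encoded by the \emph{subtraction} $-d/A$ inside the positive part. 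You instead dominate by $W^\ell\,\1_{T_e^\ell>\max(\Delta-A,-s)}$ and work with the Laplace functional of the Poisson process of ancestors directly, separating the cluster size $W^\ell$ from the survival event via Cauchy--Schwarz. Both dominations are valid (your indicator version is correct since each point of a cluster falls in at most one $I_m$ when $\Delta>A$, and a contribution to some $\bar N^m(I_m)$ forces $t-s>\Delta-A$). The key technical difference shows up in the rates: the paper converts the positive part into a Laplace transform via $x\le e^x$ with $t\asymp(1-\|\rho\|)^2\asymp u_T^2$, producing a decay $e^{-cu_T^2\Delta}$ and hence a polynomial factor $T^{-c/\kappa_0}$ that requires choosing $\kappa_0$ small enough; your use of the extinction-time tail $\P_f(T_e^\ell>v)\lesssim e^{-u_Tv/A}$ (which follows, as you note, from $\E_f[\1^T K^\ell(n)]=\1^T(\rho^T)^n\mathbf e_\ell\le\sqrt K\|\rho\|^n$ and $T_e^\ell\le n_e^\ell A$) produces a decay $e^{-cu_T\Delta}$, which is super-polynomial in $T$ for any fixed $\kappa_0$, so no tuning of $\kappa_0$ is needed. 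In that sense your argument is sharper and cleaner: the decoupling into the moment bound $\E_f[(W^\ell)^2]\lesssim u_T^{-4}$ (from Lemma~\ref{lem:Laplacetransform} at $t\asymp u_T^2$) and the exponential extinction-time tail makes the near-criticality mechanism transparent, whereas the paper's version bundles both effects into a single exponential moment of the positive part. The price is a slightly cruder treatment of the $s<0$ ancestors (you use $\max(\Delta-A,-s)$ where the paper uses the actual distance $\sim 2\Delta-A-s$), but this only changes constants and does not affect the conclusion, since the $s<0$ integral is $O(u_T^{-3})=o(T)$ as you correctly verify.
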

\begin{proof}[Proof of Lemma \ref{lem:barNm}]
We use computations of  the proof of Proposition 2 of \citet{HRR}.
To bound $\bar N^m(I_m)$, first observe that  we only consider points of $N$ whose ancestors are born before $(2m-1)T/(2J_T)$, i.e. the distance between the occurrence of an ancestor and $I_m$ is at least $2mT/(2J_T)-A-(2m-1)T/(2J_T)=T/(2J_T)-A$ since $$I_m=\left[\frac{2mT}{2J_T}-A, \frac{(2m+1)T}{2J_T}\right].$$
Using the cluster representations of the process, for any $p\in{\mathbb Z}$ and for any $\ell\in\{1,\ldots, K\}$, we consider $B_{p,\ell}$ the number of ancestors of type $\ell$ born in the interval $[p, p+1]$. The $B_{p,\ell}$'s are iid Poisson random variables with parameter $\nu_{\ell}$. We have
\begin{small}
$$\sum_{m=1}^{J_T-1}\bar N^m(I_m)\leq \sum_{\ell=1}^K\sum_{p\in {\mathcal J}_T^+}\sum_{k=1}^{B_{p,\ell}}\left(
W_{p,k}^\ell-\frac1A\left(\frac{T}{2J_T}-A\right)\right)_++\sum_{\ell=1}^K\sum_{p=-\infty}^0\sum_{k=1}^{B_{p,\ell}}\left(
W_{p,k}^\ell-\frac1A\left(-p-1+\frac{T}{J_T}-A\right)\right)_+,$$
\end{small}
where $W_{p,k}^\ell$ is the number of points in the cluster generated by the ancestor $k$ which is of type $\ell$ and
$${\mathcal J}_T^+=\{p: \ 1\leq p\leq T-T/(2J_T)\}$$ since 
$$\bigcup_{m=1}^{J_T-1} I_m\subset \left[\frac{T}{J_T}-A, T-\frac{T}{2J_T}\right].$$ 
For the first term of the previous right hand side, we have used same arguments as \citet{HRR} and the lower bound of the distance determined previously. For the second term of the right hand side, since $p\leq 0$, this lower bound is at least $-p-1+\frac{T}{J_T}-A$. Conditioned on the $B_{p,\ell}$'s, the variables $(W_{p,k}^{\ell})_{k}$ are iid with same distribution as $W^{\ell}$ introduced in Lemma~\ref{lem:Laplacetransform}. Furthermore, by Lemma~\ref{lem:Laplacetransform} applied with $f=f_0$, since $\|\rho_0\|<1$, we know that for $t_0>0$ small enough (only depending on $\|\rho_0\|$ and $K$), 
$$\E_0[\exp(t_0 W^{\ell})]\leq C_0,$$ where $C_0$ is a constant. So, for any $c>0$,
\begin{align*}
{\mathcal P}_{T,1}&:=\P_0\left(\sum_{\ell=1}^K\sum_{p\in {\mathcal J}_T^+}\sum_{k=1}^{B_{p,\ell}}\left(
W_{p,k}^\ell-\frac1A\left(\frac{T}{2J_T}-A\right)\right)_+\geq cT\right)\\
&\leq\exp(-t_0  c T)\prod_{\ell=1}^K\prod_{p\in {\mathcal J}_T^+}\E_0\left[\prod_{k=1}^{B_{p,\ell}}\E_0\left[\exp\left(t_0 \left(
W_{p,k}^\ell-\frac{T}{2AJ_T}+1\right)_+\right)| B_{p,\ell}\right]\right]\\
&\leq\exp(-t_0  c T)\prod_{\ell=1}^K\prod_{p\in {\mathcal J}_T^+}\E_0\left[(H_{\ell}(t_0 ))^{ B_{p,\ell}}\right]=\exp\Big(-t_0  c T+\sum_{\ell=1}^K\sum_{p\in {\mathcal J}_T^+}\nu_{\ell}^0(H_{\ell}(t_0)-1)\Big),
\end{align*}
where
$$H_{\ell}(t_0 ):=\E_0\left[\exp\left(t_0 \left(
W^\ell-\frac{T}{2AJ_T}+1\right)_+\right)\right],$$
satisfying
\begin{align*}
H_{\ell}(t_0 )&\leq \P_0\left(W^\ell\leq \frac{T}{2AJ_T}-1\right)+\exp(t_0 -Tt_0 /(2AJ_T))\E_0\left[\exp\left(t_0 
W^\ell\right)\right]\\
&\leq 1+C_0\exp(t_0 -Tt_0 /(2AJ_T)).
\end{align*}
Therefore,
$$\sum_{\ell=1}^K\sum_{p\in {\mathcal J}_T^+}\nu_{\ell}^0(H_{\ell}(t_0 )-1)\lesssim  (T-T/(2J_T)\exp(-Tt_0 /(2AJ_T)) \lesssim e^{ - C' \kappa_0 \log T} =o(t_0  cT)$$  
by choosing $\kappa_0$ large enough and then
$${\mathcal P}_{T,1}\lesssim \exp(-t_0  c T/2).$$
Similarly,
\begin{align*}
{\mathcal P}_{T,2}&:=\P_0\left(\sum_{\ell=1}^K\sum_{p=-\infty}^0\sum_{k=1}^{B_{p,\ell}}\left(
W_{p,k}^\ell-\frac1A\left(-p-1+\frac{T}{J_T}-A\right)\right)_+\geq cT\right)\\
%&\leq\exp(-t  c T)\prod_{\ell=1}^K\prod_{p\in {\mathcal J}_T^+}\E_f\left[\prod_{k=1}^{B_{p,\ell}}\E_f\left[\exp\left(t \left(
%W_{p,k}^\ell-\frac{T}{AJ_T}+1+\frac{1}{A}+\frac{p}{A}\right)_+\right)| B_{p,\ell}\right]\right]\\
%&\leq\exp(-t  c T)\prod_{\ell=1}^K\prod_{p\in {\mathcal J}_T^+}\E_f\left[(H_{\ell}(t ))^{ B_{p,\ell}}\right]=
&\leq\exp\Big(-t_0  c T+\sum_{\ell=1}^K\sum_{p=-\infty}^0\nu_{\ell}^0(\tilde H_{\ell,p}(t_0 )-1)\Big),
\end{align*}
where
$$\tilde H_{\ell,p}(t_0 ):=\E_0\left[\exp\left(t_0 \left(
W^\ell-\frac{T}{AJ_T}+1+\frac{1}{A}+\frac{p}{A}\right)_+\right)\right],$$
satisfying
$$
\tilde H_{\ell,p}(t_0 )\leq 1+C_0\exp(t_0 +t_0 /A-Tt_0 /(AJ_T)+t_0  p/A).
$$
Therefore,
$$\sum_{\ell=1}^K\sum_{p=-\infty}^0\nu_{\ell}^0(\tilde H_{\ell,p}(t_0 )-1)\lesssim \exp(-Tt_0 /(AJ_T))=o(t_0  cT)$$
and then
$${\mathcal P}_{T,2}\lesssim \exp(-t_0  c T/2).$$
Finally, there exists $\tilde c_0$ (only depending on $t_0$, so only depending on $\|\rho_0\|$ and $K$) such that  for all $c_0>0$  such that for $T$ large enough
 $$ \mathbb P_f\left( \sum_{m=0}^{J_T-1} \bar N^m(I_m)> c_0 T \right) \leq e^{-\tilde c_0 c_0 T} $$
and the first part of the lemma is proved. 

\bigskip

For the second part, we only consider the case $1/2\leq \|\rho\|<1$. The case $ \|\rho\|<1/2$ can be derived easily using following computations. We have:
$$
\sum_{m=1}^{J_T-1}\E_f[\bar N^{m}(I_m)] ={\mathcal E}_{T,1}+{\mathcal E}_{T,2},
$$
with
$$
{\mathcal E}_{T,1}:=\E_f\left[\sum_{\ell=1}^K\sum_{p\in {\mathcal J}_T^+}\sum_{k=1}^{B_{p,\ell}}\left(
W_{p,k}^\ell-\frac1A\left(\frac{T}{2J_T}-A\right)\right)_+\right]$$
and, with $t =\frac{1 - \|\rho\|}{2\sqrt{K}} \log \left(\frac{1+\|\rho\|}{2\|\rho\|} \right)\gtrsim(1-\|\rho\|)^2\gtrsim u_T^2$ on ${\mathcal F}_T$, since for $x>0$, $x\leq e^x$, by using Lemma~\ref{lem:Laplacetransform},
\begin{align*}
{\mathcal E}_{T,2}&:=\E_f\left[\sum_{\ell=1}^K\sum_{p=-\infty}^0\sum_{k=1}^{B_{p,\ell}}\left(
W_{p,k}^\ell-\frac1A\left(-p-1+\frac{T}{J_T}-A\right)\right)_+\right]\\
&= \sum_{\ell=1}^K\nu_\ell\sum_{p=-\infty}^0\E_f\left[\left(
W_{p,k}^\ell-\frac1A\left(-p-1+\frac{T}{J_T}-A\right)\right)_+\right]\\
&=t^{-1}\sum_{\ell=1}^K\nu_\ell\sum_{p=-\infty}^0\E_f\left[e^{t\left(
W_{p,k}^\ell-\frac1A\left(-p-1+\frac{T}{J_T}-A\right)\right)}\right]\\
&\lesssim t^{-1}e^{-\frac{tT}{AJ_T}}(1-e^{-\frac{t}{A}})^{-1}\E_f\left[e^{tW^\ell}\right]\sum_{\ell=1}^K\nu_\ell\\
&\lesssim (1 - \|\rho\|)^{-4}e^{-\frac{(1 - \|\rho\|)^2T}{AJ_T}}\sum_{\ell=1}^K\nu_\ell \lesssim e^{- \kappa_0^{-1} C'' \log T }(\log T)^{-2/3}\varepsilon_T^{-4/3},
\end{align*}
for $C''$ depending on $A$ and $K$.   Similarly,
$${\mathcal E}_{T,1}\lesssim T(1 - \|\rho\|)^{-2}e^{-\frac{(1 - \|\rho\|)^2T}{2AJ_T}}\sum_{\ell=1}^K\nu_\ell.$$
%\textcolor{red}{under \eqref{uT}, $T/J_T\sim_{T\to+\infty}\kappa_0^{-1}u_T^{-2}\log T$ when $T\to+\infty$, therefore for $\kappa_0$ small enough,
Choosing $\kappa_0$ small enough, 
$$\sum_{m=1}^{J_T-1}\E_f[\bar N^{m}(I_m)]=o(T).$$

\end{proof}
%%%%%%%%%%%%%%%%%%%%%%%%%%%%%%%%
\subsection{Proofs of results of Section~\ref{sec:prior-model}}\label{sec:Proof:bayes}
This section is devoted to the proofs of results of Section~\ref{sec:prior-model}.
\subsubsection{Proof of Corollary~\ref{cor:randparthist}}
The main difference with the case of the regular partition is the control of the $\L_1$-entropy. This is more complicated than the regular grid histogram prior and we apply instead Theorem \ref{th:slices}.  Because of the equivalence between the parameterization in $t$ or in $u$, we sometimes $\bar h_{w,t,J}$ as $\bar h_{w,u,J}$. Let $J $ and $(\underline{w},\underline{u})$ and $(\underline{w'},\underline{u'})$ belonging to $\mathcal S_J^2$. 
Then, for all $\zeta>0$, if $\delta = \delta' = 1$, $|t_j' - t_j | \leq \zeta\epsilon_T \min( |t_j - t_{j-1}| , |t_{j} - t_{j+1}|)$ for all $j$ and $\sum_j |w_j - w_j'|\leq \epsilon_T$ then 
\begin{equation*}
\begin{split}
\|\bar h_{w,t,J} -\bar h_{w',t',J} \|_1& \leq \|\bar h_{w,t,J} -\bar h_{w',t,J} \|_1 + \|\bar h_{w',t,J} -\bar h_{w',t'J} \|_1\\
& \leq \sum_{j=1}^J  |w_j-w_j'| + 4\sum_{j=1}^J\zeta\epsilon_T w_j'. % \leq 4  \ell((\underline{w}, \underline{u}),(\underline{w'}, \underline{u'}))
\end{split}
\end{equation*}
Consider  $e_T >0 $ and $\mathcal U_{J,T} = \{ \underline u \in \mathcal S_J, \min_j u_j\geq e_T \}$, under the Dirichlet prior on $\underline u$ 
 \begin{equation*}
 \begin{split}
 \Pi_u(\mathcal U_{J,T}^c | J) &\leq   \sum_{j\leq J} \Pi( u_j \leq e_T )  = \sum_{j=1}^J  Prob\left( \mbox{ Beta}(\alpha, (J-1)\alpha) \leq e_T \right) \lesssim Je_T^{\alpha}\leq e^{-c T \epsilon_T^2} 
 \end{split}
 \end{equation*}
  if $\log e_T \leq -(c/\alpha+1)T \epsilon_T^2$ if $J\leq J_1 (T/\log T)^{1/(2\beta+1)}=:J_{1,T}$. We define $\mathcal F_{1,T} = \{ \bar h_{w,u,J}, J\leq J_{1,T} ; \underline u \in \mathcal U_{J,T}\}$. 
%%%%
To apply Theorem \ref{th:slices}, we need to construct the slices $\mathcal H_{T,i} $ of $\mathcal F_{1,T}$. Let $e_{T,\ell} = e_T^{1/\ell}$ for $1 \leq \ell \leq L = \log (e_T)/\log \tau $ and $0<\tau <1 $ is fixed and $e_{T, L+1}= 1$. Without loss of generality we can assume that $ \log (e_T)/\log \tau\in \mathbb N$. For  $(u_1, \cdots , u_J)$ let $k_i$ be defined by $u_i \in (e_{T, k_i}, e_{T, k_i+1})$ and $ (N_1, \cdots, N_{L})$ be given by  $\mbox{card}\{ j, u_j \in (e_{T,\ell}, e_{T,\ell+1})\}= N_\ell$ so that $\sum_{\ell} N_\ell = J$ and consider a configuration $\sigma = (k_1, \cdots, k_J)$;  denote by  $\mathcal U_{J,T}(\sigma)$ the set of $\underline u \in \mathcal S_J$ satisfying the configuration $\sigma$, we define $\mathcal H_{T,\sigma, J} = \{ (\underline{w},\underline{u})\in \mathcal S_J \times \mathcal U_{J,T}(\sigma) \}$ and $\mathcal H_{T, \sigma}$ the collection of $\mathcal H_{T, \sigma, J}$ with $J \leq J_{1,T}$.
We have, by symmetry for all $\sigma = (k_1, \cdots, k_J)$ compatible with $(N_1, \cdots, N_L)$ writing $\bar N_\ell = N_1+\cdots+N_\ell $ 
\begin{equation*}
\begin{split}
\Pi_J\left(\mathcal U_{J,T}(\sigma) \right) &= \Pi_J\left( \cap_{\ell = 1}^{L} \{ (u_{\bar N_{\ell-1}+1}, \cdots, u_{\bar N_\ell}) \in  (e_{T,\ell-1}, e_{T,\ell})^{N_\ell} \}\right)\\
&\leq \frac{ \Gamma(\alpha J) }{ \Gamma(\alpha)^J}\prod_{\ell=1}^Le_{T}^{(\alpha-1)/(\ell+1)} \mbox{Vol}\left( \cap_{\ell = 1}^{L} \{ (u_{\bar N_{\ell-1}+1}, \cdots, u_{\bar N_\ell}) \in  (e_{T,\ell-1}, e_{T,\ell})^{N_\ell} \}\right)\\
&\leq \frac{ \Gamma(\alpha J) }{ \Gamma(\alpha)^J}\prod_{\ell=1}^{L-1} e_{T}^{(\alpha-1)N_\ell/(\ell+1)} e_{T}^{N_\ell/(\ell+1)} 
\end{split}
\end{equation*}
We now construct a net $(\underline u^{(j)}, j \leq N_{\sigma,J})$ such that for all $\underline u \in \mathcal U_{J,T}(\sigma)$ there exists $\underline u^{(j)}$ satisfying $|t_i - t_i^{(j)}| \leq \epsilon_T u_i^{(j)}\wedge u_{i+1}^{(j)}$ for all $i$, with $t_i = \sum_{\ell=1}^iu_\ell$. If   $ |t_i - t_i^{(j)}| \leq \epsilon_T e_{T, k_i} \wedge e_{T, k_{i+1}}$ then $ |t_i - t_i^{(j)}| \leq \epsilon_T u_i^{(j)}\wedge u_{i+1}^{(j)}$. Therefore, given a configuration $(k_1, \cdots, k_J)$ compatible with $(N_1, \cdots, N_L)$, we can cover $\mathcal U_{J,T}(\sigma)$ using 
$$ N_J(\sigma) \leq \prod_{i=1}^Je_{T}^{1/(k_i+1)-1/(k_i\wedge k_{i+1}) } \leq \prod_{\ell=1}^{L}e_{T,\ell+1}^{N_\ell}e_{T,\ell}^{-2N_\ell}.$$
The covering number of $\mathcal S_J$ by balls of radius $\zeta\epsilon_T$ is bounded by $ \left( \frac{ 1 }{ \zeta \epsilon_T}\right)^{J}$ and
\begin{equation*}
\begin{split}
I_T &:= \sum_{J}\sqrt{\Pi(J)} N(\epsilon_T, \mathcal H_{T,\sigma, J} ) \\
& \leq
\sum_{J}\sqrt{\Pi(J)} \left( \frac{ 1 }{ \zeta \epsilon_T}\right)^{J}  \sum_\sigma N_J(\sigma) \sqrt{\Pi_j(\mathcal U_{J,T}(\sigma))}\\
& \lesssim \sum_{J}\sqrt{\Pi(J)}\left( \frac{ 1 }{ \zeta \epsilon_T}\right)^{J}  \sum_{(N_1,\cdots, N_{L})} \frac{ J!\Gamma(\alpha J) }{ \Gamma(\alpha)^JN_1!\cdots N_{L}!} \exp\left[ \log e_T\sum_{\ell=1}^{L-1}N_\ell\left( \frac{\alpha+2}{2(\ell+1)} -\frac{ 2 }{ \ell} \right)\right]e_T^{-2N_L/L}\\
&\lesssim  \left( \frac{1 }{ \zeta \epsilon_T}\right)^{J_{1,T}} e^{  2\alpha J_{1,T}\log J_{1,T} } \sum_{J=1}^{J_{1,T} }\sum_{(N_1,\cdots, N_{L})}\frac{ J! }{N_1!\cdots N_{L}!} \prod_{\ell=1}^{L} p_\ell^{N_\ell} \prod_{\ell=1}^{L-1}\frac{e_T^{N_\ell\left( \frac{\alpha+2}{2(\ell+1)} -\frac{ 2 }{ \ell} \right)}}{p_\ell} \frac{e_T^{-2N_L/L}}{ p_{L}p_{L-1}}
\end{split}
\end{equation*}
for any $p_1,\cdots, p_{L}\geq 0$ with $\sum_{\ell=1}^{L+1}p_\ell=1$. Taking $p_L = 1/(L+1)$ and since $\alpha \geq 6$, $ \frac{\alpha+2}{2(\ell+1)} -\frac{ 2 }{ \ell} \geq 0$ for all $\ell \geq 1$, leading to 
\begin{equation*}
\begin{split}
I_T \lesssim \tau^{-2J_{1,T}} \left( \frac{ 1 }{ \zeta \epsilon_T}\right)^{J_{1,T}} e^{  2\alpha J_{1,T}\log J_{1,T}+ (L+1)\log (L+1) } \sum_{J=1}^{J_{1,T} }\sum_{(N_1,\cdots, N_{L})}\frac{ J! }{N_1!\cdots N_{L}!} \prod_{\ell=1}^{L} p_\ell^{N_\ell} \lesssim e^{ K J_{1,T} \log T} 
\end{split}
\end{equation*}
for some $K>0$ and condition \eqref{cond:slices} is verified. 
%%%%
\subsubsection{Proof of Corollary~\ref{prop:mix:beta}}
The proof is based on \citet{rousseau:09}, where mixtures of Beta densities are studied for density estimation, and using Theorem \ref{th:slices}. 
Note that for all $h_1, h_2$ 
 $$  |(h_1(x))_+ - (h_2(x))_+| \leq |h_1(x) - h_2(x) |$$
 so that Proposition \ref{prop:mix:beta} is proved by studying
  $$ \tilde B(\epsilon_T, B) = \left\{(\nu_k,(g_{\ell,k})_{\ell})_k:\quad \max_k|\nu_k-\nu^0_k|\leq\e_T, \, \max_{\ell,k}\|g_{\ell,k}-g^0_{\ell,k}\|_2 \leq \e_T, \,\max_{\ell,k}\|g_{\ell,k} \|_\infty \leq B  \right\}$$
  in the place of $B(\epsilon_T, B)$ and by controlling the $\L_1$-entropy associated to 
  $$\mathcal G_{1,T} = \left\{ g_{\alpha, P};\quad P=\sum_{j=1}^{J} p_j \delta_{\epsilon_j}, \, \epsilon_j \in [e_1, 1 - e_1]; \, \alpha \in [\alpha_{0T}, \alpha_{1T}] ;\,  \sum_j |p_j|=1, \, J\leq J_{1,T} \right\}$$
  where 
  $$e_1 = e^{-a_0 T\epsilon_T^2} , \quad \alpha_{0T} = \exp\left( - Tc_0 \epsilon_T^2\right); \quad \alpha_{1T} = \alpha_1 T^{2}\epsilon_T^4, \quad J_{1,T}  = J_1 T^{1/(2\beta+1)}(\log T)^{(\beta-2)/(4\beta+2)},$$ with  $c_0, \alpha_1, a_0, J_1>0$ and 
  $g_{\alpha, P} = \int_0^1 g_{\alpha, \epsilon} dP(\epsilon)$.  From the proof of Theorem 2.1 in \citet{rousseau:09}, we have that for all $c_2>0$ we can choose $a_0, c_0, \alpha_1>0$ such that 
  $ \Pi\left( \mathcal G_{1,T}^c \right) \leq e^{- c_2 T \epsilon_T^2 } $ and $\mathcal G_{1,T}$ can be cut into  the following slices: 
 we group the components into the intervals $[e_\ell, e_{\ell+1}]$ or $[1-e_{\ell+1}, 1 - e_{\ell}]$ with $e_\ell = e_0^{1/\ell}$ and $e_{L_T} = T^{-t}$, for some $t>0$, and the interval $[e_{L_T}, 1 - e_{L_T}]$. For each of these intervals we denote $N(\ell)$ the number of  components which fall into the said interval, $N(\ell) = \sum_{i=1}^J \1_{\epsilon_i \in (e_\ell, e_{\ell+1}) \cup (1 -e_{\ell+1}, 1 - e_\ell)}$ if $\ell\leq J_T$,  and $N(L_T+ 1)=\sum_{i=1}^k \1_{\epsilon_i \in (e_{L_T},1 -e_{L_T})}$ . Let $J \leq J_{1,T}$
   $\mathcal G_{1,\sigma}(J)  = \{ g_{\alpha, P} \in \mathcal G_{1,T}; \, N(\ell) = k_\ell, \, \sum_{\ell=1}^{L_T+1} k_\ell =J\}$ with $\sigma  $ denoting the configuration $(k_1, \cdots, k_{L_T+1})$. From   \citet{rousseau:09} Section 4.1, for all  $\zeta>0$,  we have 
   $$N(\zeta \epsilon_T, \mathcal G_{1,\sigma}(J),\|\cdot\|_1) \leq (\zeta \epsilon_T)^{-k_{L_T}} \prod_{\ell=1}^{L_T-1} \left( \frac{(\log e_{\ell+1} - \log e_\ell) }{ \zeta \epsilon_T e_\ell }\right)^{k_\ell}   \leq (\zeta \epsilon_T)^{-k_{L_T}} \prod_{\ell=1}^{L_T-1} \left( \frac{\log(1/e_1) }{ \ell(\ell+1)\zeta \epsilon_T e_1^{1/\ell} }\right)^{k_\ell}  $$
   and 
   $$ \sqrt{\Pi(\mathcal G_{1,\sigma})(J) }  \leq \sqrt{\Pi_J(J) } \frac{ \Gamma(J+1)^{1/2} }{ \prod_{\ell =1}^{L_T+1} \Gamma(k_\ell)^{1/2} } \prod_{\ell=1}^{L_T} p_{T, \ell}^{k_\ell/2}, \quad p_{T,\ell} \leq c (e_{\ell+1}^{a+1} - e_{\ell}^{a+1} ), \quad \ell\leq L_T-1$$
   and $p_{T,L_T} \leq 1$.
  Since $ e_{\ell+1}^{a+1} - e_{\ell}^{a+1} \leq e_{\ell+1}^{a+1}\leq e_1^{(a+1)/(\ell+1)}$ and since $J! \geq \prod_{\ell=1}^{J_T+1} k_\ell ! $, we obtain 
\begin{equation*}
\begin{split}
\sum_{J\leq J_{1,T} }\sum_{\sigma}N(\zeta \epsilon_T, \mathcal G_{1,\sigma}(J),\|\cdot\|_1) \sqrt{\Pi(\mathcal G_{1,\sigma}(J))} & \lesssim J_{1,T}e^{ C J_{1,T} \log T } \sum_{\sigma}\frac{ J! }{ \prod_{\ell=1}^{L_T+1} k_\ell !} \prod_{\ell=1}^{L_T+1}\left( \frac{ \bar c }{ \ell (\ell+1)}\right)^{k_\ell} \\& =J_{1,T}e^{ C J_{1,T} \log T } 
\end{split}
\end{equation*}
as soon as $a\geq 3$, where $\bar c^{-1} = \sum_{\ell =1}^{L_T+1} 1/(\ell (\ell+1))$. Therefore condition \eqref{cond:slices} is verified.
  We now study the Kullback-Leibler condition (i). 
  Again, we  use Theorem 3.1 in \citet{rousseau:09}, so that for all $f_0 \in \mathcal H(\beta, L)$ and all $\beta >0$ there exists $f_1$ such that 
 $\|f_0  - g_{\alpha, f_1 } \|_\infty \lesssim  \alpha^{-\beta/2}$, when $\alpha $ is large enough and 
 $g_{\alpha, f_1} = \int_0^1 g_{\alpha,\epsilon}f_1(\epsilon )d\epsilon$, and where $f_1 $ is either equal to $f_0$ if $\beta \leq 2$ or $f_1 = f_0 \sum_{j=1}^{\lceil \beta \rceil  - 1} w_j/\alpha^{j/2} $, with $w_j$ a polynomial function with coefficients depending on $f_0^{(l)}$ $l \leq j$. From that, we construct a finite mixture approximation of $g_{\alpha, f_1 }$. Note that even if $f_0$ is positive, $f_1$ is not necessarily so. Hence to use the convexity argument of Lemma A1 of \citet{ghosal:vdv:01} we write 
 $f_1 $ as $m_+f_{1,+} - m_-f_{1,-}$ with $f_{1,+}, f_{1,-} \geq 0$ and probability densities. In the case where $m_- = 0$ then $f_{1,-}=0$. We approximate 
$g_{\alpha, f_{1,+} }$ and $g_{\alpha, f_{1,-} }$ separately. Contrarywise to what happens in \citet{rousseau:09}, here we want to allow $f_0$ to be null in some sub-intervals of $[0,1]$. Hence we adapt the proof of Theorem 3.2 of \citet{rousseau:09} to this set up. Let $f$ be a probability density on $[0,1]$ we construct a discrete approximation of $g_{\alpha, f }$. Let $\epsilon_0 = \alpha^{-H_0}$ for some $H_0>0$ and define $\epsilon_j = \epsilon_0( 1 + B\sqrt{\log \alpha /\alpha})^j$ for $j = 1, \cdots , J_\alpha $ with $J_\alpha = O(\sqrt{\alpha \log \alpha }) $ and $B>0$ a constant.  We then have, from Lemma \ref{lem:discrete:betas} below that there exists a signed measure $P_0$ with at most $N = O(\sqrt{\alpha}(\log \alpha)^{3/2})$ supporting points on $[\epsilon_1, 1- \epsilon_1]$, such that:
$$\|g_{\alpha, P_0} - f_0\|_2 \leq \|g_{\alpha, P_0} - g_{\alpha, f_1}\|_2 + \|g_{\alpha, f_1} - f_0\|_\infty\lesssim  \alpha^{-\beta/2} ; \quad \|g_{\alpha, P_0}\|_\infty \leq \|f_0\|_\infty + o(1) , \quad P_0 = \sum_{i =1}^N p_i \delta_{\epsilon_i}.$$
As in \citet{rousseau:09} Theorem 3.2, we can assume that  $|p_i|\geq \alpha^{-A}$ for some fixed $A$ large enough. Following from Section 4.1 of  \citet{rousseau:09}, There exists $A'>0 $ such that if $P$ satisfies   $\max_i|P(U_i)-p_i| \leq \alpha^{-A'} |p_i|$, with $U_i = [\epsilon_i(1 -\epsilon_i)( 1 - \alpha^{-A'}), \epsilon_i(1 -\epsilon_i)( 1 + \alpha^{-A'})]$ then 
 $$ \|g_{\alpha, P_0}-g_{\alpha,P}\|_2 \leq \alpha^{-\beta/2}, \quad \|g_{\alpha,P}\|_\infty \leq \|f_0\|_\infty +o(1).$$
 As in \citet{rousseau:09}, if $\epsilon_T = \epsilon_0 T^{-\beta/(2\beta+1)} (\log T)^{5\beta/(4\beta+2)}$, then 
$$\Pi\left(\tilde B(\epsilon_T, \|f_0\|_\infty +1)\right)\geq e^{- c_1 T \epsilon_T^2}$$
for some $c_1>0$, which terminates the proof of Corollary~\ref{prop:mix:beta}.
 \begin{lemma} \label{lem:discrete:betas}
 Assume that $f$ is a bounded probability density on $[0,1]$, then for all $B_0>0$ there exists $\tilde N_0>0$ and  a signed measure $P_0$ with at most $N \leq \tilde{N_0} \sqrt{\alpha} (\log \alpha)^{3/2}$ on $[\epsilon_1, 1 - \epsilon_1]$ such that 
 $$\|g_{\alpha, f} - g_{\alpha, P} \|_2 \lesssim \alpha^{-B_0}, \quad \|g_{\alpha, P_0}\|_\infty \lesssim \|f_0\|_\infty + o(1)$$ 
 \end{lemma}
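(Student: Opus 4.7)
The plan is to discretize the mixing density $f$ on the geometric grid $\epsilon_j=\epsilon_0(1+B\sqrt{\log\alpha/\alpha})^j$, $j=1,\dots,J_\alpha$, defined above the lemma statement, and its symmetric counterpart near $1$. On each cell $I_j=[\epsilon_j,\epsilon_{j+1}]\cup[1-\epsilon_{j+1},1-\epsilon_j]$ the local scale is $|I_j|\asymp \epsilon_j\sqrt{\log\alpha/\alpha}$, which is precisely the length scale over which $g_{\alpha,\epsilon}(x)$ varies appreciably in $\epsilon$ (since the standard deviation of the Beta with mean $\epsilon$ and scale $\alpha$ is of order $\sqrt{\epsilon(1-\epsilon)/\alpha}$). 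On each cell I would use a local quadrature rule of order $r=\lceil 2B_0\rceil$ at nodes $\epsilon_{j,1},\dots,\epsilon_{j,r}\in I_j$ to build a signed atomic measure $P_j=\sum_{k=1}^r w_{j,k}\delta_{\epsilon_{j,k}}$ matching the moments of $f(\epsilon)\,\mathbf 1_{I_j}(\epsilon)d\epsilon$ up to order $r-1$. The total signed measure is $P_0=\sum_j P_j$, which has at most $N=rJ_\alpha\lesssim \sqrt\alpha(\log\alpha)^{3/2}$ supporting points after accounting for both ends.

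Next I would bound $\|g_{\alpha,f}-g_{\alpha,P_0}\|_2$. Writing the error on $I_j$ as a quadrature error, by Peano's theorem it is dominated by $|I_j|^{r}$ times a supremum of $\|\partial_\epsilon^{r} g_{\alpha,\epsilon}(\cdot)\|_2$ over $\epsilon\in I_j$. A direct computation using $g_{\alpha,\epsilon}(x)=\Gamma(\alpha/(\epsilon(1-\epsilon)))/(\Gamma(\alpha/\epsilon)\Gamma(\alpha/(1-\epsilon)))\,x^{\alpha/(1-\epsilon)-1}(1-x)^{\alpha/\epsilon-1}$ gives $\|\partial_\epsilon^{r}g_{\alpha,\epsilon}\|_2\lesssim \alpha^{1/4}[\epsilon(1-\epsilon)]^{-1/4}(\alpha/[\epsilon(1-\epsilon)])^{r}$, up to logarithmic factors, so the local error is $\lesssim (\log\alpha/\alpha)^{r/2}\epsilon_j^{-1/4}(1-\epsilon_j)^{-1/4}$. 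After summing over $j$ using $\sum_j 1\lesssim \sqrt{\alpha\log\alpha}$ and integrating $f$ against these contributions, the total $L^2$ error is $\lesssim (\log\alpha/\alpha)^{r/2}\times\sqrt{\alpha\log\alpha}\lesssim \alpha^{-B_0}$ for $r$ chosen large enough. For the contribution from the two boundary strips $[0,\epsilon_1]$ and $[1-\epsilon_1,1]$, which are discarded, one uses that $g_{\alpha,\epsilon}$ with $\epsilon\le\epsilon_1=\alpha^{-H_0}$ produces a Beta concentrated at $0$ with total $L^2$ mass $O(\alpha^{1/2})$ and that $\int_0^{\epsilon_1}f\le\|f\|_\infty\epsilon_1$, yielding a tail bound $\lesssim \|f\|_\infty \alpha^{-H_0+1/2}$, which is $\ll\alpha^{-B_0}$ once $H_0$ is taken large enough; this fixes the constant $H_0$ in the definition of $\epsilon_1$.

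For the sup-norm bound $\|g_{\alpha,P_0}\|_\infty\lesssim\|f\|_\infty+o(1)$, I would decompose $g_{\alpha,P_0}=g_{\alpha,f}+(g_{\alpha,P_0}-g_{\alpha,f})$. The first term satisfies $\|g_{\alpha,f}\|_\infty\le\|f\|_\infty$ because $g_{\alpha,f}$ is a convolution-type smoothing of $f$ against a probability kernel (a standard fact: $\int g_{\alpha,\epsilon}(x)d\epsilon$ is bounded uniformly in $x$ away from $\{0,1\}$ by $1+o(1)$, and at worst introduces a $1+o(1)$ factor near the endpoints as in Lemma~3 of \citet{rousseau:09}). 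For the residual, the same quadrature argument applied pointwise — replacing the $L^2$ norm of the derivative by its sup-norm, which carries the same scaling $(\alpha/[\epsilon(1-\epsilon)])^r$ up to an extra $\sqrt\alpha$ — gives $\|g_{\alpha,P_0}-g_{\alpha,f}\|_\infty\lesssim \alpha^{-B_0+1/2}=o(1)$ provided $r$ is chosen accordingly.

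The delicate step, and the one I would expect to be the main obstacle, is controlling the derivatives $\partial_\epsilon^{r}g_{\alpha,\epsilon}$ uniformly in $\epsilon\in I_j$ with sharp dependence on $\epsilon(1-\epsilon)$; this is what forces the geometric grid and what determines the final power of $\log\alpha$ in $N$. The computation is essentially the same as in Theorem~3.2 of \citet{rousseau:09}, and the geometric spacing $\epsilon_j=\epsilon_0(1+B\sqrt{\log\alpha/\alpha})^j$ is calibrated precisely so that the step length $|I_j|$ absorbs the $\epsilon$-dependence of $\partial_\epsilon^r g_{\alpha,\epsilon}$, yielding a uniform bound after summation. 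Once this control is obtained the remaining arguments (quadrature on each cell, tail bound near $\{0,1\}$, counting atoms) are routine.
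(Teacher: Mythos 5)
Your architecture (geometric grid $\epsilon_j=\epsilon_0(1+B\sqrt{\log\alpha/\alpha})^j$, local moment-matching quadrature per cell, separate treatment of the two boundary strips) is the same as the paper's, which in turn follows Theorem~3.2 of \citet{rousseau:09}. But there is a genuine quantitative gap in the step where you take a \emph{fixed} order $r=\lceil 2B_0\rceil$ quadrature rule per cell.

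The natural scale on which $g_{\alpha,\epsilon}(x)$ varies in $\epsilon$ is the standard deviation $\sigma_\alpha(\epsilon)\asymp\epsilon(1-\epsilon)/\sqrt\alpha$, and the cell width is $|I_j|\asymp\epsilon_j(1-\epsilon_j)\sqrt{\log\alpha/\alpha}=\sigma_\alpha\sqrt{\log\alpha}$, i.e.\ a factor $\sqrt{\log\alpha}$ \emph{larger} than the kernel's natural scale. The correct scaling of the derivative is $\|\partial_\epsilon^r g_{\alpha,\epsilon}\|_2\asymp\sigma_\alpha(\epsilon)^{-r}\|g_{\alpha,\epsilon}\|_2$, so the Peano/Taylor remainder on each cell is
\[
\frac{|I_j|^r}{r!}\sup_{\epsilon\in I_j}\|\partial_\epsilon^r g_{\alpha,\epsilon}\|_2 \asymp \frac{(\log\alpha)^{r/2}}{r!}\,\|g_{\alpha,\epsilon_j}\|_2 ,
\]
which for a fixed $r$ \emph{grows} with $\alpha$ rather than decays. (Your stated derivative bound $(\alpha/[\epsilon(1-\epsilon)])^r$ is off by a factor $\alpha^{r/2}$; once corrected, the $\alpha$-powers in $|I_j|^r$ and $\|\partial^r g\|$ cancel and you are left with exactly the factor $(\log\alpha)^{r/2}$.) To beat this you must take the quadrature order $r$ of order $\log\alpha$, so that the factorial makes $(\log\alpha)^{r/2}/r!$ super-polynomially small; this is what Rousseau does, and it is precisely why the lemma's atom count has the exponent $(\log\alpha)^{3/2}$: $O(\sqrt{\alpha\log\alpha})$ cells $\times$ $O(\log\alpha)$ atoms per cell. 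Your count $N\lesssim\sqrt\alpha(\log\alpha)^{1/2}$ from a bounded number of nodes per cell is a symptom of the error bound being too optimistic, not a genuine improvement. The remaining pieces of your sketch (decomposing $g_{\alpha,P_0}=g_{\alpha,f}+\text{residual}$ for the sup-norm, discarding the mass of $f$ on $[0,\epsilon_1]\cup[1-\epsilon_1,1]$ with $H_0$ large) are consistent with the paper's treatment.
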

\begin{proof}[Proof of Lemma \ref{lem:discrete:betas}]
On each of the intervals $(\epsilon_{j-1}, \epsilon_j)$ we construct a probability $P_j$ having support on $(\epsilon_{j-1}, \epsilon_j)$ with cardinality smaller than $N_j \leq N_0 \log \alpha$ and such that 
 \begin{equation}\label{discrete1}
 \| g_{\alpha, f_j } - g_{\alpha, P_j} \|_2^2 \lesssim \alpha^{- B_0}, \quad f_j = \frac{f \1_{(\epsilon_{j-1}, \epsilon_j)} }{ \int_{\epsilon_{j-1}}^{\epsilon_j}f(\epsilon) d\epsilon }\end{equation}
 where $B_0$ can be chosen arbitrarily large by choosing $N_0$ large enough.
 To prove \eqref{discrete1} we use the same ideas as in the proof of Theorem 3.2 of \citet{rousseau:09}. For all $j = 2, \cdots, J-2$ on $(\epsilon_{j-1}, \epsilon_j)$, there exists $P_j$ with at most $N_1 \log \alpha$ terms such that 
  if $x\in [0,1]$, 
  $$\left| g_{\alpha, f_j} - g_{\alpha, P_j}\right| (x) \leq \frac{ \alpha^{-H} }{ x ( 1 - x) } $$
  where $H$ can be chosen as large as need be, by choosing $N_1$ large enough. Moreover, let $x\leq \epsilon_0$  or $x > 1 -\epsilon_0$, 
  then for all $\epsilon \in (\epsilon_1, 1 - \epsilon_1)$, if $x < \epsilon_0$ then $x/\epsilon \leq \delta_\alpha = (1 + B\sqrt{\log \alpha/\alpha})^{-1}$ and 
  \begin{equation*}
  \begin{split}
  g_{\alpha, \epsilon}(x) &\lesssim \sqrt{\alpha} \exp\left(  \alpha \left[ \frac{\log (x/\epsilon) }{ 1 -\epsilon} - (\log x) /\alpha  +\frac{  \log ((1-x)/(1-\epsilon)) }{\epsilon}   \right]\right)
%   &\lesssim \frac{ \sqrt{\alpha}}{\epsilon} \exp\left( - \alpha [h(x/\epsilon) + (2\epsilon+\alpha^{-1}) \log (x/\epsilon) ]\right)
  \end{split}
  \end{equation*}
 If $\epsilon_1 \leq  \epsilon  < 1/4$   then  the function $\epsilon \rightarrow  \frac{\log (\epsilon/x) }{ 1 -\epsilon} - \log(\epsilon/ x) /\alpha  +\frac{  \log ((1-\epsilon)/(1-x)) }{\epsilon}   $ is increasing and 
  \begin{equation*}
  \begin{split}
  g_{\alpha, \epsilon}(x) &\lesssim \frac{\sqrt{\alpha}}{\epsilon} \exp\left(  \alpha \left[  \log ( \delta_\alpha) \left( 1 + x \delta_\alpha+\delta_\alpha^2 x^2) + O(x^3) \right) - 1 + \delta_\alpha^{-1}  \right]\right)\\
 &  \lesssim \alpha^{-B^2/3 + H_0 } \lesssim \alpha^{-B^2/4} ,
\end{split}
\end{equation*}
by choosing $B^2 \geq 12 H_0$.
The same reasoning can be applied to $x> 1-\epsilon_0$, which terminates the proof.
\end{proof}
%%%%%%%%%%%%%%%%%%%%%%%%%%
%%%%%%%%%%%%%%%%%%%%%%%%%%
\bibliographystyle{apalike}
\bibliography{biblio}
\end{document}